\newtheorem{theorem}{Theorem}[section]
\newtheorem{cor}[theorem]{Corollary}
\theoremstyle{definition}
\newtheorem{example}[theorem]{Example}
\theoremstyle{remark}
\newtheorem{remark}[theorem]{Remark}
\numberwithin{equation}{section}
\renewcommand{\leq}{\leqslant}
\renewcommand{\geq}{\geqslant}
\renewcommand{\Re}{{\operator@font Re}}
\renewcommand{\Im}{{\operator@font Im}}
\newcommand{\tr}{{\operator@font tr}}
\newcommand{\range}{{\operator@font ran}}
\newcommand{\spn}{{\operator@font span}}
\newcommand{\sinc}{{\operator@font sinc}}
\newcommand{\erf}{{\operator@font erf}}
\newcommand{\Ai}{{\operator@font Ai}}
\newcommand{\Bi}{{\operator@font Bi}}
\newcommand{\cov}{{\operator@font cov}}
\newcommand{\var}{{\operator@font var}}
\newcommand{\sgn}{{\operator@font sign}}
\newcommand{\const}{{\operator@font const}}
\newcommand{\res}{{\operator@font res}}
\newcommand{\hess}{{\operator@font hess}}
\newcommand{\arccot}{{\operator@font arccot}}
\let \limsup \relax
\DeclareMathOperator*{\limsup}{\smash[b]{\operator@font lim\,sup}}
\DeclareMathOperator*{\argmin}{\operator@font arg\,min}
\DeclareMathOperator*{\esssup}{\operator@font ess\,sup}
\newcommand{\C}{{\mathbb  C}}
\newcommand{\R}{{\mathbb  R}}
\newcommand{\Z}{{\mathbb  Z}}
\newcommand{\N}{{\mathbb  N}}
\newcommand{\prob}{{\mathbb  P}}
\newcommand{\projected}[1]{|_{#1}}
\begin{document}

\title[Accuracy and Stability of Computing High-Order Derivatives]{Accuracy and Stability of Computing High-Order Derivatives of Analytic Functions by Cauchy Integrals}

\author{Folkmar Bornemann}
\address{Zentrum Mathematik -- M3, Technische Universität München,
         80290~München, Germany}
\email{bornemann@ma.tum.de}
\date{\today}

\subjclass[2000]{}

\begin{abstract} High-order derivatives of analytic functions are expressible as Cauchy integrals
over circular contours, which can very effectively be approximated, e.g., by trapezoidal sums. Whereas
analytically each radius $r$ up to the radius of convergence is equal, numerical stability strongly depends on $r$. We give a comprehensive
 study of this effect; in particular we show that there is a unique
radius that minimizes the loss of accuracy caused by round-off errors. For large classes of functions, though not for all,
this radius actually gives about full accuracy; a remarkable fact that we explain by the theory of Hardy spaces, by the Wiman--Valiron and Levin--Pfluger
theory of entire functions, and by the saddle-point method of asymptotic analysis. Many examples and non-trivial applications are discussed in detail.
\end{abstract}

\maketitle

\setcounter{tocdepth}{1}
\vspace*{-0.25cm}
\tableofcontents

\vspace*{-0.25cm}
\section{Introduction}\label{sect:intro}

Real variable formulae for the numerical calculation of high-order derivatives severely suffer  from the ill-conditioning of real differentiation.
Balancing approximation errors with round-off errors yields an inevitable minimum amount of error that blows up as the order of differentiation increases \citeaffixed[Thm.~2]{MR780799}{see, e.g.,}.
It is therefore quite tricky, using these formulae with hardware arithmetic, to obtain any significant digits for derivatives of
orders, say, hundred or higher. For functions which extend analytically to the complex plane, numerical quadrature applied to Cauchy integrals has on various occasions been
suggested as a remedy \citeaffixed[p.~152/187]{MR1454125}{see}. To be specific, let us consider an analytic function $f$ with the Taylor
series\footnote{Without loss of generality, the point of development is $z=0$, which we choose for ease of notation throughout this paper. Though such series are often named after Maclaurin, we keep the name {\em Taylor series}
to stress that we really do not use anything specific to $z=0$.}
\begin{equation}\label{eq:taylor}
f(z) = \sum_{k=0}^\infty a_k z^k\qquad \qquad (|z|<R)
\end{equation}
having radius of convergence  $R>0$ (with $R=\infty$ for entire functions). Cauchy's integral formula applied to circular contours yields ($n=0,1,2,\ldots$, $0 < r < R$)
\begin{align}
a_n &= \frac{f^{(n)}(0)}{n!} \notag\\*[1mm]
&= \frac{1}{2\pi i} \int_{|z|=r} \frac{f(z)}{z^{n+1}}\,dz\notag\\*[1mm]
&= \frac{1}{2\pi r^n}\int_0^{2\pi} e^{-in\theta}f(re^{i\theta})\,d\theta.\label{eq:an}
\end{align}
Since trapezoidal sums\footnote{Recall that, for periodic functions, the trapezoidal sum and the rectangular rule are just the same.} are known to converge geometrically for periodic analytic functions \cite{MR0100354},
the latter integral is amenable to the very simple and yet effective approximation\footnote{For other quadrature rules see the remarks in §\ref{subsect:rules}.}
\begin{equation}\label{eq:anm}
a_n(r,m) = \frac{1}{m r^n} \sum_{j=0}^{m-1} e^{- 2\pi i j n/m}f(re^{2\pi i j/m}).
\end{equation}
This procedure for approximating $a_n$ was suggested by \citeasnoun{805983}. Later, \citeasnoun{362820} observed that the correspondence
\[
\big(r^n a_n(r,m)\big)_{n=0}^{m-1} \quad\leftrightarrow\quad \big(f(r e^{2\pi ij/m})\big)_{j=0}^{m-1}
\]
induced by (\ref{eq:anm}) is, in fact, the discrete Fourier transform; accordingly they published an algorithm for calculating a set of normalized Taylor coefficients $r^n a_n$ based on the FFT.

Whereas all radii $0< r <R$ are, by Cauchy's Theorem, analytically equal, they are not so numerically. On the one hand, the geometric convergence rate of the trapezoidal
sums improves for smaller $r$. On the other hand, for $r\to 0$ there is an increasing amount of cancelation in the Cauchy integral which leads to a blow-up
of relative errors \cite[p.~130]{805983}. Moreover, there is generally also a problem of numerical stability for $r\to R$ (see §\ref{sect:cond} of this paper).
So, once again there arises the question of a proper balance between approximation errors and round-off errors: what choice of $r$ is best and what is the minimum error thus
obtained?

There is not much available about this problem in the literature. \citeasnoun{362820} circumnavigate it altogether
by just considering the \emph{absolute} errors of the normalized Taylor coefficients $r^n a_n$ instead of relative errors,
leaving the choice of $r$ to the user as an application-specific scale factor; on p.~670 they write:

\medskip
\begin{quote}
It is natural to ask why this choice of output [i.e., $r^n a_n$] was made, rather than perhaps a set of Taylor coefficients $a_n$ or a set of derivatives $f^{(n)}(0)$.
The most immediate reason is that the algorithm naturally provides a set of normalized Taylor coefficients to a uniform absolute accuracy. If, for example, one is
interested in a set of derivatives, the specification of the accuracy requirements becomes very much more complicated. However, if one looks ahead to the use
to which the Taylor coefficients are to be put, one finds in many cases that uniform accuracy in normalized Taylor coefficients corresponds to the sort of accuracy
requirement which is most convenient.
\end{quote}
\medskip

\citename{MR654904} \citeyear{MR654904,MR654904b} addresses the choice of a suitable radius $r$ by suggesting a simple search procedure that tries to make $(r^n a_n)_{n=0}^{m-1}$ approximately
proportional to the geometric sequence $0.75^n$. If accomplished, this results, for $m=32$, in a loss of at most about $m |\log_{10}(0.75)| \doteq 4.0$ digits;\footnote{We write ``$\doteq$'' to
indicate that a number has been correctly \emph{rounded} to the digits given, ``$\sim$'' to denote a rigorous asymptotic equality, and ``$\approx$'' to informally assert some approximate agreement.} see §\ref{sect:abs} below. Further, he applies
Richardson extrapolation to the last three radii of the search process to enhance the convergence rate of the trapezoidal sums. However, the success of both devices
is limited to functions whose Taylor coefficients approximately follow a geometric progression. In fact, \citeasnoun[p.~542]{MR654904b} identifies some problems:

\medskip
\begin{quote}\label{qt:fornberg}
Some warning about cases in which full accuracy may not be reached. Such cases are
\smallskip
\begin{enumerate}
\item very low-order polynomials (for example, $f(z)=1+z$);\\*[-3mm]
\item functions whose Taylor coefficients contain very large isolated terms (for example, $f(z)=10^6 + 1/(1-z)$);\\*[-3mm]
\item certain entire functions (for example, $f(z)=e^z$);\\*[-3mm]
\item functions whose radius of convergence is limited by a branch point at which the function remains many times [real] differentiable
(for example, $f(z)=(1+z)^{10} \log(1+z)$ expanded around $z=0$).
\end{enumerate}
\end{quote}

\medskip

\begin{figure}[tbp]
\begin{center}
\begin{minipage}{0.45\textwidth}
\begin{center}
\includegraphics[width=\textwidth]{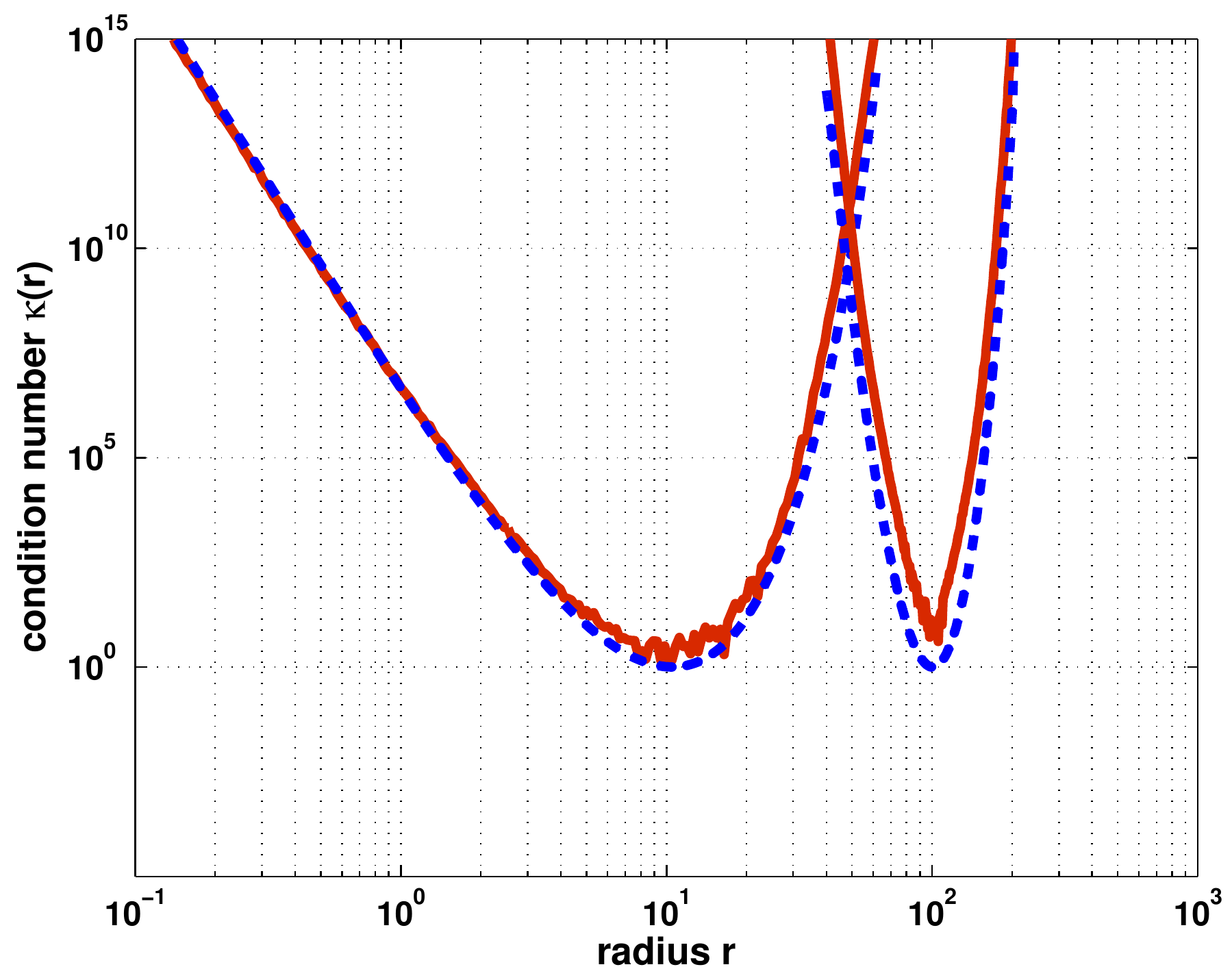}\\*[-1.5mm]
{\tiny a.\;\; $f(z)=\exp(z)$ (Example~\ref{ex:exp})}
\end{center}
\end{minipage}
\hfil
\begin{minipage}{0.45\textwidth}
\begin{center}
\includegraphics[width=\textwidth]{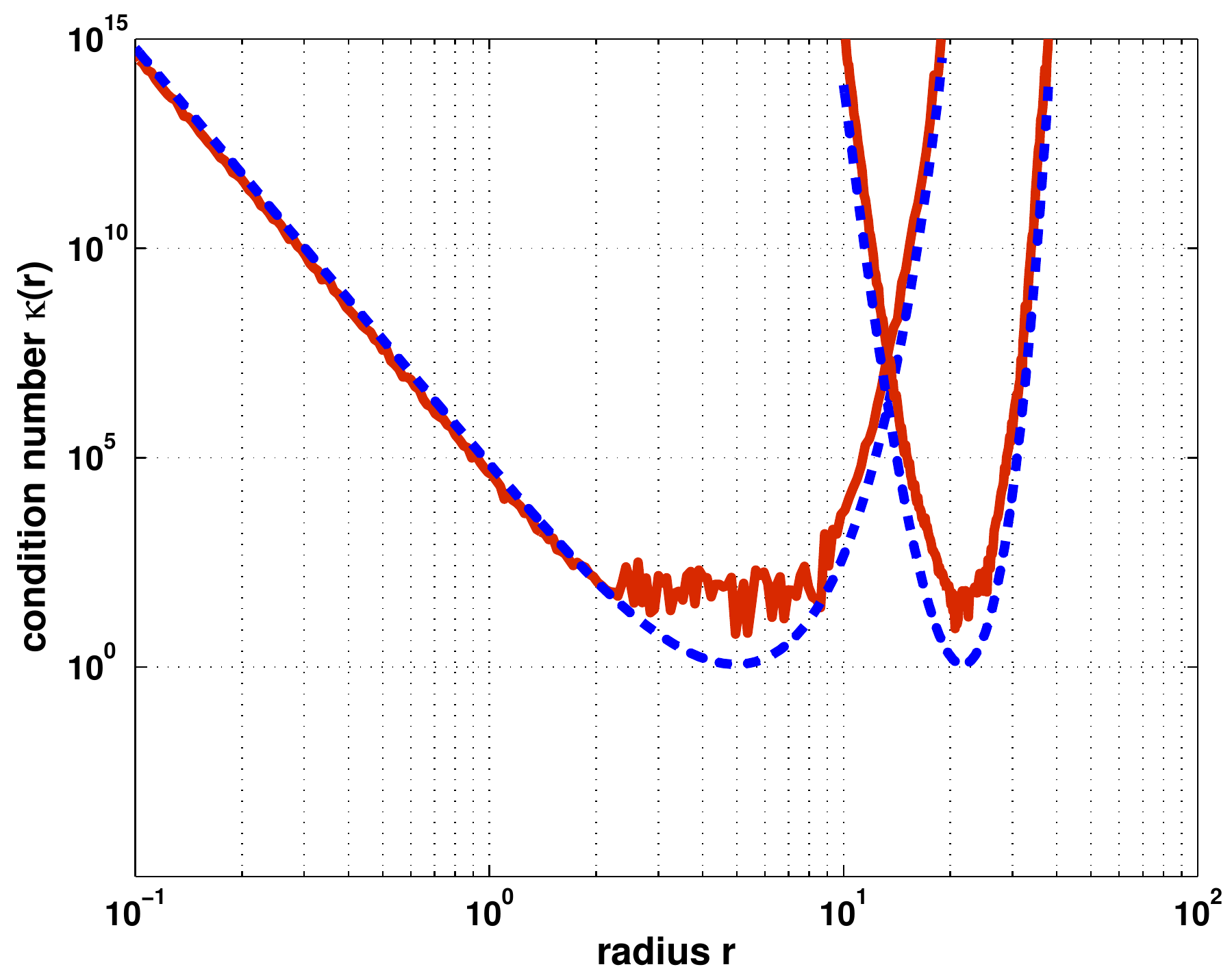}\\*[-1.5mm]
{\tiny b.\;\; $f(z)=\Ai(z)$ (Example~\ref{ex:airy})}
\end{center}
\end{minipage}\\*[2mm]
\begin{minipage}{0.45\textwidth}
\begin{center}
\includegraphics[width=\textwidth]{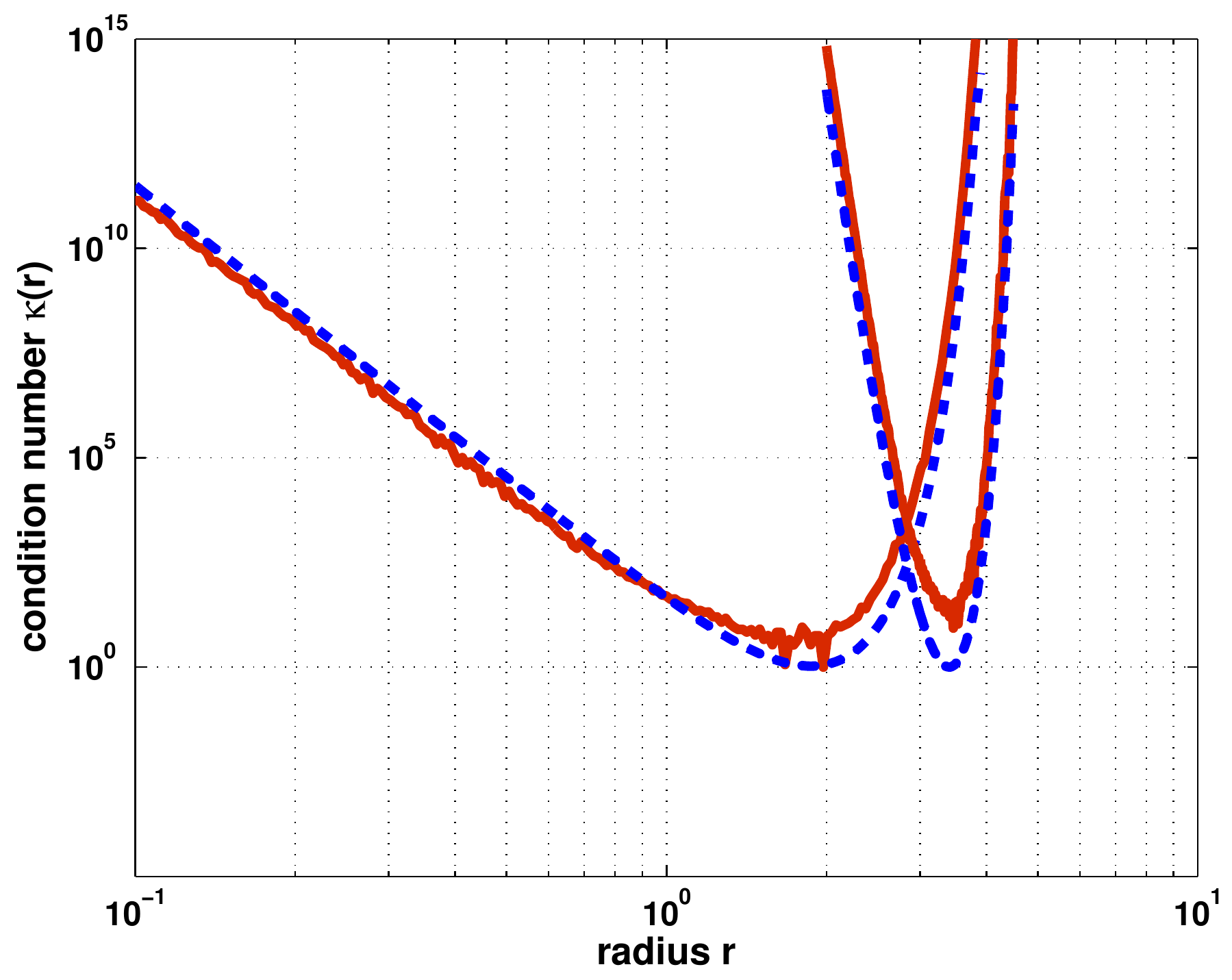}\\*[-1.5mm]
{\tiny c.\;\; $f(z)=\exp(\exp(z)-1)$ (Example~\ref{ex:diamondbell})}
\end{center}
\end{minipage}
\hfil
\begin{minipage}{0.45\textwidth}
\begin{center}
\includegraphics[width=\textwidth]{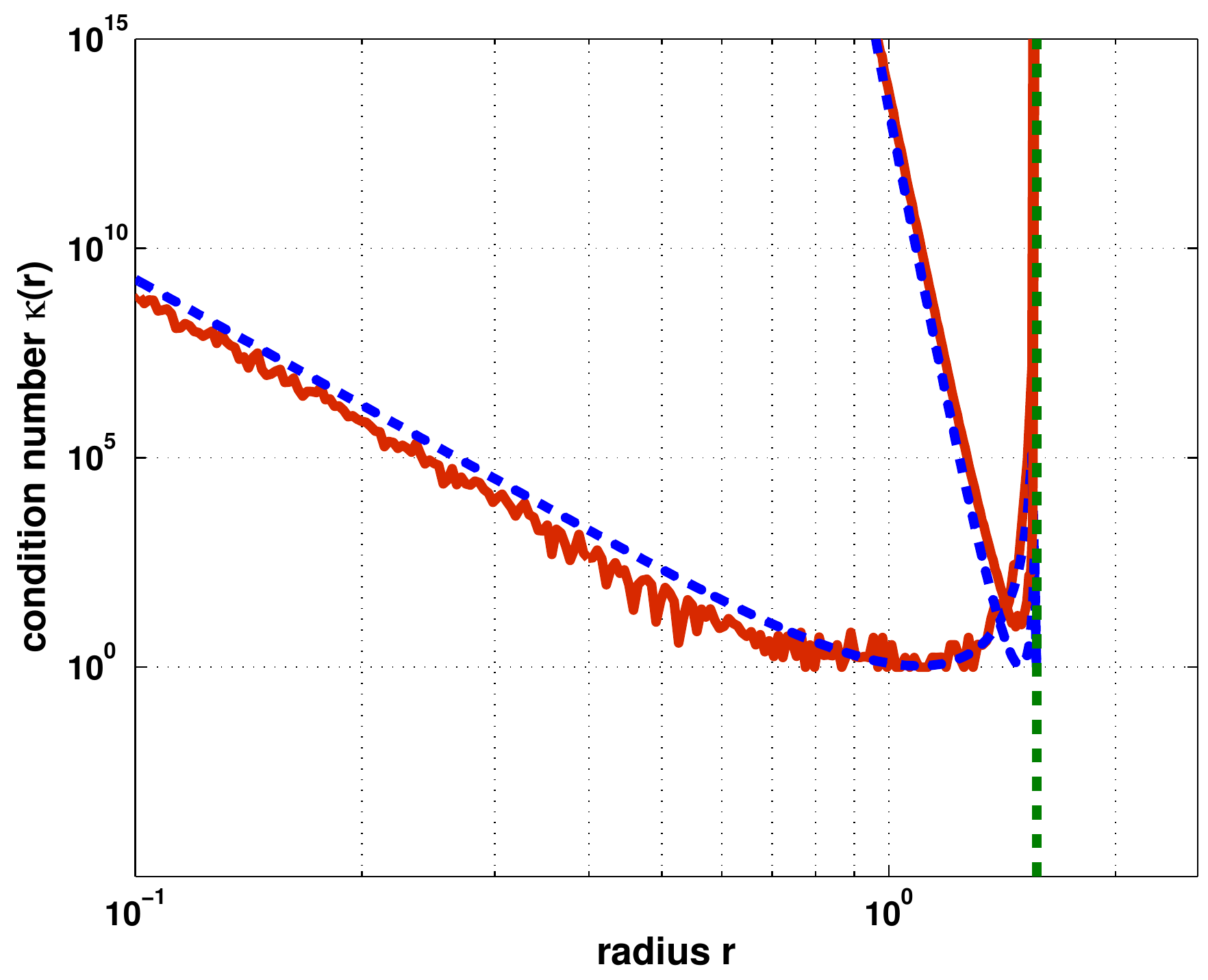}\\*[-1.5mm]
{\tiny d.\;\; $f(z)=\sec(z)^6$ (Example~\ref{ex:sec})}
\end{center}
\end{minipage}\\*[2mm]
\begin{minipage}{0.45\textwidth}
\begin{center}
\includegraphics[width=\textwidth]{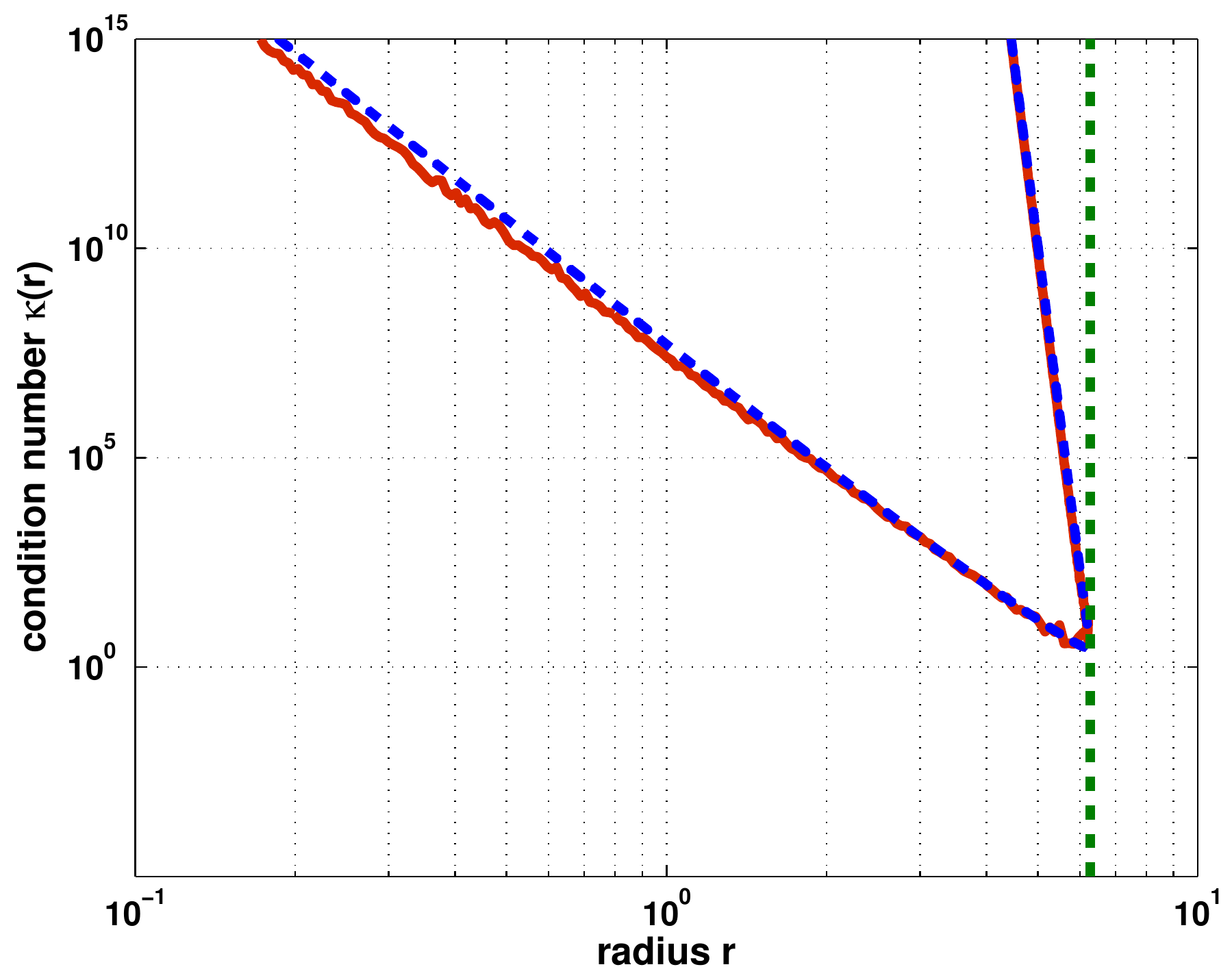}\\*[-1.5mm]
{\tiny e.\;\; $f(z)=z/(e^z-1)$ (Example~\ref{ex:bern})}
\end{center}
\end{minipage}
\hfil
\begin{minipage}{0.45\textwidth}
\begin{center}
\includegraphics[width=\textwidth]{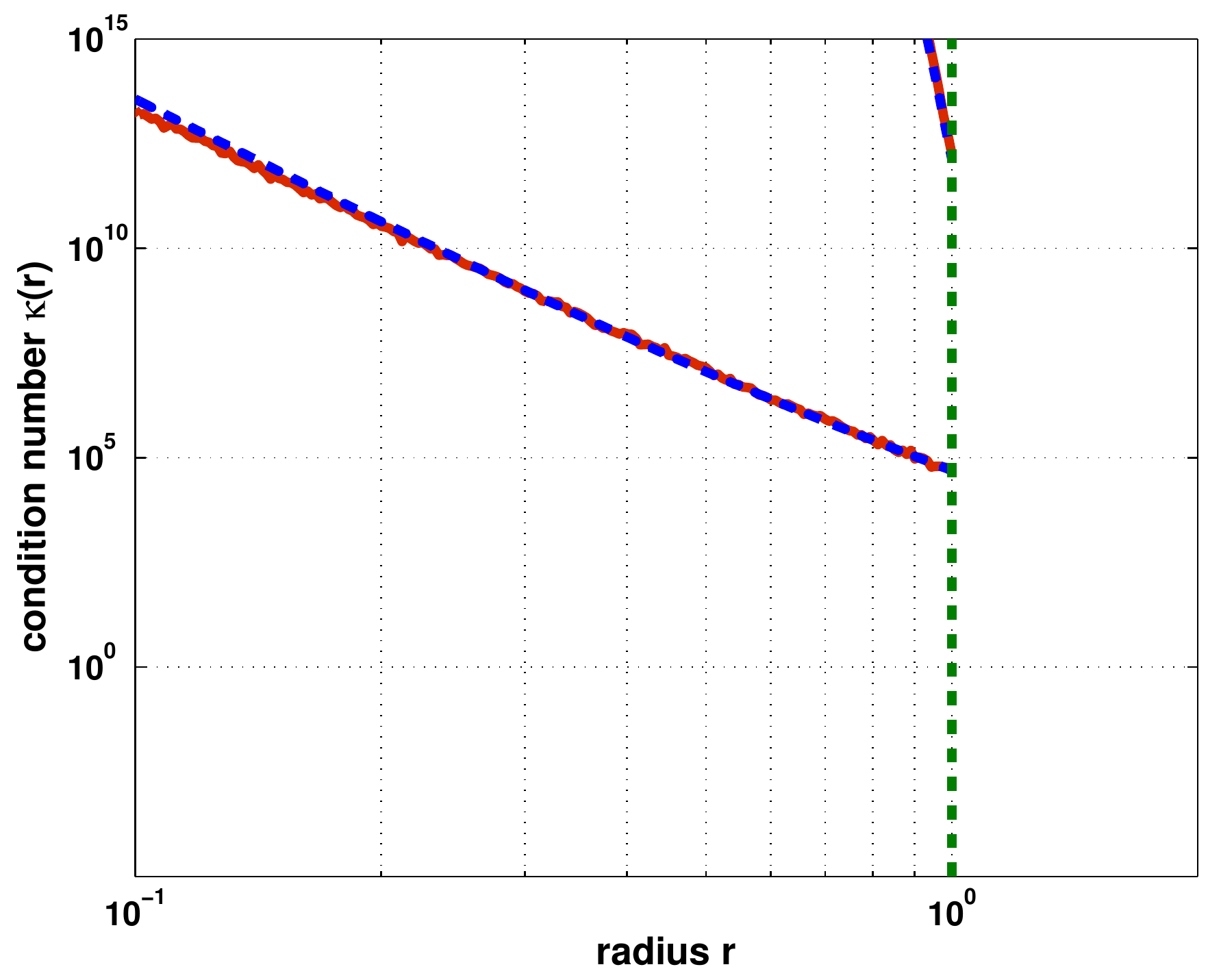}\\*[-1.5mm]
{\tiny f.\;\; $f(z)=(1-z)^{11/2}$ (Example~\ref{ex:fbeta})}
\end{center}
\end{minipage}
\end{center}
\caption{Numerical stability of using Cauchy integrals to compute $f^{(n)}(0)$: plots of the empirical loss of digits (solid red line), that is,
the ratio of the relative error divided by the machine precision, and its prediction by the condition number $\kappa(n,r)$ (dashed blue line)  vs.
the radius $r$. The vertical lines (dashed green) of the last three plots visualize a {\em finite} radius of convergence $R<\infty$. In each plot the results for \emph{two} different orders of differentiation are shown: $n=10$
(the less steep curves starting from the left)
and $n=100$ (the steeper curves starting farther to the right). The number $m$ of nodes of the trapezoidal sum approximation was chosen large enough not to change
the picture. The qualitative shape (convexity in the double logarithmic scale, coercivity and monotonicity properties) of these condition number plots can be
completely understood from the general results in §\ref{sect:radius}.}\label{fig:condcomp}
\end{figure}

As illustrated by the numerical experiments of Figure~\ref{fig:condcomp}, an answer to
the question of choosing a proper radius $r$ becomes absolutely mandatory for derivatives of orders of about $n=100$ and higher: outside a
narrow region of radii there is a complete loss of accuracy. However, rather surprisingly, Figure~\ref{fig:condcomp} also shows that about full accuracy can be obtained for
some functions if we choose the optimal radius that minimizes the loss of accuracy. We observe that such an optimal radius strongly depends on $n$ (and $f$). This strong dependence, together with the complete
loss of accuracy far off the optimal radius, prevents us from using,  for larger $n$,
 just a single radius $r$ to calculate all the leading Taylor coefficients $a_0,\ldots,a_n$ in one go; it thus puts the FFT effectively out of business for the problem at hand.

The goal of this paper is a deeper mathematical understanding of all these effects. In particular, we would like to automate the choice
of the parameters $m$ and $r$ and to predict the possible loss of accuracy. This turns out to be a surprisingly rich and multi-faceted topic, with relations to some classical
results of complex analysis such as Hadamard's three circles theorem (§\ref{sect:upper}) as well as to some more
advanced topics such as the theory of Hardy spaces (§§\ref{sect:radius}/\ref{sect:darboux}), the Wiman--Valiron theory of the maximum term of entire functions (§\ref{sect:prg}), the
Levin--Pfluger theory of the distribution of zeros of entire functions (§\ref{sect:growth}); and with relations to some advanced tools of asymptotic analysis and analytic combinatorics
such as the saddle-point method (§\ref{sect:saddle}) and the concept of $H$-admissibility (§\ref{sect:hayman}).

\subsection*{Outline of the Paper}

To guide the reader through the thicket of this paper, we summarize its most relevant findings:

\begin{itemize}
\item from the point of approximation theory and convergence rates as $m\to\infty$, smaller radii are better than larger ones (§\ref{sect:conv});  there are
useful explicit upper bounds of the number of nodes $m$ in terms of the desired relative error $\epsilon$, the order of differentiation $n$, and the chosen radius $r$ (Eqs.~(\ref{eq:mbound1}) and (\ref{eq:mbound2}));\\*[-3mm]
\item with respect to \emph{absolute} errors, the calculation of the normalized Taylor coefficients $r^n a_n$ is numerically stable for {\em any} radius $r<R$ (§\ref{sect:abs});\\*[-3mm]
\item with respect to \emph{relative} errors, the loss of significant digits is modeled by $\log_{10} \kappa(n,r)$ where $\kappa(n,r)$ denotes the condition number of
the Cauchy integral (§\ref{sect:rel}, see also Figure~\ref{fig:condcomp}), which is {\em independent} of the particular quadrature rule chosen for the actual approximation; it can be estimated on the fly (algorithm given in Figure~\ref{fig:D});\\*[-3mm]
\item $\log \kappa(n,r)$ is a convex function of $\log r$ (Corollary~\ref{cor:Hardy}) and there exists an (essentially unique) optimal radius $r_*(n) = \argmin_r \kappa(n,r)$ that minimizes the loss of accuracy
caused by round-off errors; these
optimal radii form an increasing sequence satisfying $r_*(n) \to R$ as $n\to\infty$ (Theorem~\ref{thm:rn});\\*[-3mm]
\item for finite radius of convergence $R<\infty$, the corresponding optimal condition number $\kappa_*(n)$ blows up if $f$ belongs to the Hardy space~$H^1$ (Theorem~\ref{thm:hardy});
on the other hand, $\kappa_*(n)$ remains essentially bounded if $f$ does \emph{not} belong to the Hardy space $H^1$ and is amenable to Darboux's method (§§\ref{sect:examples} and \ref{sect:darboux}), in which case there are
useful explicit (asymptotic) formulae for $r_*(n)$ and $\kappa_*(n)$ (Eqs.~(\ref{eq:darbouxradius}) and (\ref{eq:darbouxest}));\\*[-3mm]
\item for entire transcendental functions it is more convenient to analyze a certain upper bound $\bar\kappa(n,r)$ of the condition number (§\ref{sect:upper}); this
yields a \emph{unique} radius $r_\diamond(n) = \argmin_r \bar\kappa(n,r)$, called the \emph{quasi-optimal} radius, with a corresponding quasi-optimal condition number
$\kappa_\diamond(n)=\kappa(n,r_\diamond(n)) \geq \kappa_*(n)$; the quasi-optimal radii also form an increasing sequence with $r_*(n) \to R$ as $n\to\infty$ (Theorem~\ref{thm:rdiamondgrowth});\\*[-3mm]
\item for entire functions of perfectly regular growth there is a simple asymptotic formula for $r_\diamond(n)$ in terms of the order and type of such a function (Theorem~\ref{thm:rdiamond});\\*[-3mm]
\item $r_\diamond(n)$ is the modulus of certain saddle points of $|z^{-n} f(z)|$ in the complex plane (Theorem~\ref{thm:zn}); the saddle-point method offers a methodology to obtain
asymptotic results for $\kappa_\diamond(n)$ (§\ref{sect:saddlepointmethod});\\*[-3mm]
\item for entire functions of completely regular growth (satisfying certain conditions on the zeros), the circular contour of radius $r_\diamond(n)$ is
optimal in the sense that it passes the saddle points approximately in the direction of steepest descent (§\ref{sect:growth}); this yields the extremely simple asymptotic condition number bound $\limsup_n \kappa_\diamond(n) \leq \Omega$ where $\Omega$ is the number of maxima of the Phragmén--Lindelöf indicator function of $f$ (Theorem~\ref{thm:condcrg}); in fact,
    there is an explicit asymptotic formula for $\kappa_\diamond(n)$ in terms of a finite sum (Theorem~\ref{thm:crg}) that turns out to yield $\kappa_\diamond(n) \sim 1$  in many relevant examples;\\*[-3mm]
\item for $H$-admissible entire functions we have $\kappa_\diamond(n)\sim 1$ (Corollary~\ref{cor:admissible});\\*[-3mm]
\item for entire functions $f$ with non-negative Taylor coefficients the quasi-optimal radius $r_\diamond(n)$ can
be calculated as the solution of the scalar convex optimization problem $r_\diamond(n) = \argmin_r r^{-n} f(r)$ (Theorem~\ref{thm:nonneg});
we prove $\kappa_\diamond(n) \sim 1$ for a model of a Fredholm determinant with non-negative Taylor coefficients (Eq.~(\ref{eq:kappaoneexponent})).
\end{itemize}

We shall comprehensively discuss many concrete examples and applications throughout this paper: most notably the functions illustrated in Figure~\ref{fig:condcomp},
the functions from the list of the Fornberg quote on p.~\pageref{qt:fornberg}, the functions whose properties are listed in Table~\ref{tab:functions},
the functions $f(z)=(1-z)^\beta$ ($\beta\in\R\setminus\N_0$) (Example~\ref{ex:fbeta}), the generalized hypergeometric functions (Example~\ref{ex:hyper}), the reciprocal Gamma function $f(z)=1/\Gamma(z)$ (§\ref{sect:gamma}),
a generating function from the
theory of random matrices (Examples~\ref{ex:rmt1} and \ref{ex:rmt2}), and a generating function from the theory of random permutations (Example~\ref{ex:gessel}).

\section{Approximation Theory}\label{sect:conv}

\subsection{Convergence Rates}

In this section we recall some basic facts about the convergence of the trapezoidal sums applied to Cauchy integrals on circular contours.
We use the notation
\[
D_r = \{ z \in C : |z| < r\},\qquad C_r = \{z \in \C : |z| = r\},
\]
for (open) disks and circles of radius $r$. Let $f$ be an analytic function as in §\ref{sect:intro}, $\mathcal{P}_m$ be the set of all polynomials of degree $\leq m$ and let
\[
E_m(f;r) = \inf_{p \in \mathcal{P}_m} \|f - p \|_{L^\infty(\overline{D_r})}\qquad (0< r <R)
\]
denote the error of best polynomial approximation of $f$ on the closed disk $\overline{D_r}$. Equivalently, by the maximum modulus principle, we have
\[
E_m(f;r) = \inf_{p \in \mathcal{P}_m} \|f - p \|_{L^\infty(C_r)}\qquad (0< r <R).
\]
The following theorem  belongs certainly to the ``folklore'' of numerical analysis; pinning it down, however, in the literature in exactly the form that we need turned out to be difficult. For  accounts
of the general techniques used in the proof see, for the aliasing relation, \citeasnoun[§§13.2/4]{MR822470} and, for the use of best approximation in estimating quadrature errors, \citeasnoun[§4.8]{MR760629}.
\begin{theorem}\label{thm:conv} Let $f$ be analytic in $D_R$ and $0< r <R$. Then, with the $n$-th Taylor coefficient $a_n$ and its approximation $a_n(r,m)$ as in (\ref{eq:an}) and (\ref{eq:anm}), we have
the aliasing relation
\begin{equation}\label{eq:alias}
r^{n} a_{n}(r,m) = r^{n'} a_{n'}(r,m) \qquad (n \equiv n' \bmod{m})
\end{equation}
and the error estimate
\begin{equation}\label{eq:best}
r^n |a_n - a_n(r,m) | \leq 2 E_{m-1} (f;r) \qquad (0 \leq n < m).
\end{equation}
\end{theorem}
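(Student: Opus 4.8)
The plan is to treat the two assertions separately. The aliasing relation $(\ref{eq:alias})$ is immediate: by $(\ref{eq:anm})$ one has $r^n a_n(r,m) = \frac1m\sum_{j=0}^{m-1} e^{-2\pi ijn/m} f(re^{2\pi ij/m})$, and since the factor $e^{-2\pi ijn/m}$ depends on $n$ only through the residue $n \bmod m$, the right-hand side is unchanged when $n$ is replaced by any $n'$ with $n'\equiv n \bmod m$.

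For the error estimate $(\ref{eq:best})$ I would first make the aliasing error explicit. Inserting the Taylor series $(\ref{eq:taylor})$ into $(\ref{eq:anm})$, interchanging the two summations (legitimate since $\sum_{k}|a_k|r^k<\infty$ for $r<R$), and evaluating the inner geometric sums $\frac1m\sum_{j=0}^{m-1} e^{2\pi ij(k-n)/m}$, which equal $1$ if $m\mid(k-n)$ and $0$ otherwise, one obtains for $0\leq n<m$
\[
r^n\,a_n(r,m) = \sum_{\ell=0}^\infty a_{n+\ell m}\,r^{n+\ell m},\qquad\text{so that}\qquad r^n\big(a_n(r,m)-a_n\big) = \sum_{\ell=1}^\infty a_{n+\ell m}\,r^{n+\ell m}.
\]
In particular, applying this to a polynomial $p\in\mathcal P_{m-1}$ (all of whose coefficients of index $\geq m$ vanish) shows that $f\mapsto r^n a_n(r,m)$ reproduces $p$ exactly, so the \emph{linear} functional $f\mapsto r^n\big(a_n(r,m)-a_n\big)$ annihilates $\mathcal P_{m-1}$. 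Hence, for any $p\in\mathcal P_{m-1}$, replacing $f$ by $g=f-p$ does not change its value; writing that value out as the trapezoidal-sum-minus-integral applied to $\theta\mapsto e^{-in\theta}g(re^{i\theta})$ and bounding each of the two pieces by $\|g\|_{L^\infty(C_r)}$ --- both are averages of $|g|$ over the circle $C_r$ --- gives $r^n|a_n-a_n(r,m)|\leq 2\|f-p\|_{L^\infty(C_r)}$. Taking the infimum over $p\in\mathcal P_{m-1}$ yields $(\ref{eq:best})$.

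The one point that deserves a moment's care --- and the only place the restriction $0\leq n<m$ is used --- is the exact reproduction of $\mathcal P_{m-1}$. Equivalently stated in Fourier terms, for $p(z)=\sum_{k=0}^{m-1}b_k z^k$ the function $\theta\mapsto e^{-in\theta}p(re^{i\theta})$ is a trigonometric polynomial whose frequencies $k-n$ (with $0\leq k\leq m-1$) all lie in the open interval $(-m,m)$, so none of them is a \emph{nonzero} multiple of $m$; and the $m$-point trapezoidal rule integrates $e^{i\nu\theta}$ exactly except when $\nu$ is a nonzero multiple of $m$. This observation is what pins the degree threshold at $m-1$ rather than $m$, and thereby the constant $2$ in $(\ref{eq:best})$; the remainder is routine bookkeeping with roots of unity. (This is, in essence, the standard ``exactness plus operator-norm'' estimate for quadrature errors, with best polynomial approximation in its usual role.)
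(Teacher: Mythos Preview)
Your proof is correct and follows essentially the same route as the paper: exactness of the trapezoidal sum on $\mathcal{P}_{m-1}$ (which the paper phrases as $a_n(r,m)$ being the exact $n$-th Taylor coefficient of the degree-$(m{-}1)$ interpolant, both derived from the same discrete orthogonality), then the triangle inequality $|I_n(f)-Q_n(f)|\leq|I_n(f-p)|+|Q_n(f-p)|\leq 2\|f-p\|_{L^\infty(C_r)}$, followed by taking the infimum over $p$. Your explicit aliasing series and the Fourier-frequency remark in the last paragraph are pleasant extras but not a different method.
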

\begin{proof}
The key to this theorem is the observation that $a_n(r,m)$, with $0\leq n < m$, is the {\em exact} Taylor coefficient of the polynomial $p_* \in \mathcal{P}_{m-1}$ that interpolates $f$ in the
nodes $r e^{2\pi i j/m}$ ($j=0,\ldots,m-1$). This fact, and also the aliasing relation, easily follows from the discrete orthogonality
\[
\frac{1}{m} \sum_{j=0}^{m-1} e^{-2\pi i j n/m} e^{2 \pi ij n'/m} =
\begin{cases}
1 & n \equiv n' \bmod{m}; \\*[1mm]
0 & \text{otherwise}.
\end{cases}
\]
Now, by introducing the averaging operators
\begin{equation}\label{eq:operators}
I_n(f;r) = \frac{1}{2\pi} \int_0^{2\pi} e^{-in\theta} f(r e^{i\theta})\,d\theta,\qquad
Q_n(f;r,m) = \frac{1}{m}  \sum_{j=0}^{m-1} e^{- 2\pi i j n/m}f(re^{2\pi i j/m}),
\end{equation}
we have $r^n a_n = I_n(f;r)$ and $r^n a_n(r,m) = Q_n(f;r,m)$. The observation about the approximation being exact for polynomials implies, for $p \in  \mathcal{P}_{m-1}$ and $0\leq n <m$,
that $I_n(p;r) = Q_n(p;r,m)$ and hence
\begin{multline*}
|I_n(f;r) - Q_n(f;r,m)| \\*[1mm] \leq |I_n(f;r) - I_n(p;r)| + |Q_n(p;r,m) - Q_n(f;r,m)|
\leq 2 \| f - p\|_{L^\infty(C_r)}.
\end{multline*}
Taking the infimum over all $p$ finally implies (\ref{eq:best}).
\end{proof}

From the aliasing relation we immediately infer an important basic criterion for the choice of the parameter $m$, namely the
\begin{equation}\label{eq:sampling}
\text{\bf Sampling Condition:} \quad m>n.
\end{equation}
For otherwise, if $m \leq n$, the value $a_n(r,m)$ is just a good approximation of $r^{k-n} a_k$, with $0\leq k < m$ the remainder of dividing $n$ by $m$. However, in general,
 $r^{k-n} a_k$ will differ considerably from $a_n$.

\subsection{Estimates of the Number of Nodes}

To obtain more quantitative bounds of the approximation error as $m\to\infty$, we have a closer look at the error of best approximation.
With $R$ the radius of convergence of the Taylor series (\ref{eq:taylor}) of $f$, the asymptotic geometric rate of convergence of this error is given by \cite[§4.7]{MR0218588}
\begin{equation}\label{eq:geomconv}
\limsup_{m\to\infty} E_m(f;r)^{1/m} = \frac{r}{R}.
\end{equation}
Thus, if we introduce the relative error (assuming $a_n \neq 0$)
\begin{equation}\label{eq:relerr}
\delta_m(n,r) = \frac{|a_n - a_n(r,m)|}{|a_n|},
\end{equation}
we get from (\ref{eq:best}) and (\ref{eq:geomconv}) that
\begin{equation}\label{eq:relerrgeom}
\limsup_{m\to \infty} \delta_m(n,r)^{1/m} \leq \frac{r}{R}.
\end{equation}

\subsubsection{Finite Radius of Convergence}\label{sect:convfinite}
If $R<\infty$, we obtain from (\ref{eq:relerrgeom}) that, for $n$ and $r$ fixed,
\[
\frac{1}{m} \log \delta_m(n,r)^{-1} \geq \log(R/r) + o(1) \qquad (m\to \infty).
\]
Therefore, if $m_\epsilon$ denotes the \emph{smallest} value such that $\delta_m(n,r) \leq \epsilon$ for $m\geq m_\epsilon$ (which implies $\delta_{m_\epsilon} \sim \epsilon$ as $\epsilon\to 0$), we get the asymptotic bound
\begin{equation}\label{eq:mbound1}
m_\epsilon \leq \frac{\log(\epsilon^{-1})}{\log(R/r)} (1+o(1)) \qquad (\epsilon \to 0).
\end{equation}

\begin{example} To illustrate the sharpness of this bound, we consider the function $f(z) = z/(e^z-1)$ for $n=100$, taking the radius $r=6.22$ that is about the optimal one shown in
Fig.~\ref{fig:condcomp}.e.
Here $R=2\pi$ and, for a relative error $\epsilon = 10^{-12}$ (which is, for this particular choice of $r$, large enough to exclude any finite precision effects of the hardware arithmetic), we get
\[
m_\epsilon = 2734 \leq \underbrace{\frac{\log(\epsilon^{-1})}{\log(R/r)}}_{\doteq\, 2733.80}\cdot 1.00007;
\]
thus, the bound (\ref{eq:mbound1}) is an excellent prediction. In Example~\ref{ex:bern} we will see that, for general $n$, the radius $r_n = 2\pi(1-n^{-1})$ is, in terms of numerical stability, about optimal  and
yields the estimate $m_\epsilon \approx n \log \epsilon^{-1}$. That is, for $\epsilon$ fixed, we get $m_\epsilon = O(n)$ as $n\to \infty$, which is the best we could expect in view of the sampling condition (\ref{eq:sampling}).
Further examples of this kind are in §§\ref{sect:examples} and \ref{sect:darboux}.
\end{example}

\subsubsection{Entire Functions}\label{subsec:entire}

If $f$ is entire, that is, $R=\infty$, the estimate (\ref{eq:relerrgeom}) shows that the trapezoidal sums converge even faster than geometric:
\[
\lim_{m\to \infty} \delta_m(n,r)^{1/m} = 0.
\]
In fact, if $f$ is a polynomial of degree $d$, we already know from Theorem~\ref{thm:conv} that the trapezoidal sum is exact for $m>d$, which implies\footnote{Recall that we have assumed
$a_n\neq 0$ in the definition of $\delta_m$, which restricts us to $n\leq d<m$.}  $\delta_m(n,r) = 0$. If $f$ is entire and transcendental, a more detailed resolution of the behavior of $\delta_m$ depends
on the properties of $f$ at its essential singularity in $z=\infty$. For example, entire functions of finite order $\rho>0$ and type $\tau >0$ (for a definition see
§\ref{sect:prg} below)
yield \cite{MR0039122,MR559368}
\begin{equation}\label{eq:bestentire}
\limsup_{m\to\infty} m^{1/\rho} E_m(f;r)^{1/m} = r (e\rho\tau)^{1/\rho}.
\end{equation}
We thus get
\begin{equation}\label{eq:relerrentire}
\limsup_{m\to \infty} m^{1/\rho} \delta_m(n,r)^{1/m} \leq  r (e\rho\tau)^{1/\rho}
\end{equation}
and therefore, for $n$ and $r$ fixed,
\[
\frac{1}{m} \log \delta_m(n,r)^{-1} - \frac{1}{\rho} \log(m/(e\rho\tau))  \geq \log(1/r) + o(1) \qquad (m\to \infty).
\]
Solving for $m_\epsilon$, as defined in §\ref{sect:convfinite}, yields the asymptotic bound
\begin{equation}\label{eq:mbound2}
m_\epsilon \leq \frac{\rho \log(\epsilon^{-1})}{W(\log(\epsilon^{-1})/(e \tau r^{\rho}))}(1+o(1)) \qquad (\epsilon\to 0).
\end{equation}
Here $W(z)$ denotes the principal branch of the Lambert $W$-function defined by the equation $z = W(z) e^{W(z)}$. In Remark~\ref{rm:mboundprg} we will specify this bound, for entire functions of perfectly regular growth,
using a particular radius that is about optimal in the sense of numerical stability.

\begin{example} To illustrate the sharpness of this bound, we consider $f(z) = e^z$ for $n=10$ taking the radius $r=10$, which we read off from Figure~\ref{fig:condcomp}.a to be close to optimal.
Here, the order and type of the exponential functions are $\rho=\tau=1$ (see Table~\ref{tab:functions}) and we get the results of Table~\ref{tab:m_exp} (that were computed using high-precision arithmetic in Mathematica).
As we can see, (\ref{eq:mbound2}) turns out to be a very useful upper bound.

\begin{table}[tbp]
\caption{Sharpness of the bound (\ref{eq:mbound2}) for $f(z)=e^z$ ($n=10$, $r=10$).}
\vspace*{0mm}
\centerline{%
\setlength{\extrarowheight}{3pt}
{\small\begin{tabular}{ccc}\hline
$\epsilon$ & minimal $m_\epsilon$ & $\rho \log \epsilon^{-1}/W(\log\epsilon^{-1}/e\tau r^\rho)$\\ \hline
$10^{-12}$ & $32$ & $\,\,48.21$\\
$10^{-100}$ & $126$ & $140.30$\\
$10^{-1000}$ & $694$ & $706.73$\\*[0.5mm]\hline
\end{tabular}}}
\label{tab:m_exp}
\end{table}

\end{example}

\subsection{Other Quadrature Rules}\label{subsect:rules}

To approximate the Cauchy integral~(\ref{eq:an}), there are other quite effective quadrature rules available besides the trapezoidal sums; examples are
Gauss--Legendre and Clenshaw--Curtis quadrature.
From the point of complexity theory, however, \citeasnoun{MR2170870} have shown (drawing from the pioneering work of Nikolskii in the 1970s)
that the trapezoidal sums are, for the problem at hand, \emph{optimal} in the sense of Kolmogorov.\footnote{That is, the $m$-point trapezoidal sum minimizes, among all $m$-point quadrature formulas,
the worst case quadrature error for the Cauchy integral (\ref{eq:an}) over
all analytic functions whose modulus is bounded  by some constant in an open disk containing $|z|\leq r$.} Hence, for definiteness and simplicity, we stay with trapezoidal
sums in this paper.

It is, however, important to note that the results of this paper apply to other families of quadrature rules as well: first, the estimates (\ref{eq:mbound1}) and (\ref{eq:mbound2})
remain valid if the quadrature error
is bounded by the error of polynomial best approximation (as in (\ref{eq:best}), up to some different constant); which is, e.g.,
the case for Gauss--Legendre and Clenshaw--Curtis quadrature \citeaffixed{Tref08}{see}. Second, the discussion of numerical stability in the next section applies to quadrature rules
with {\em positive} weights in general. In particular, the estimated digit loss (\ref{eq:loss}) depends just on the condition number of the Cauchy integral itself, an
analytic quantity \emph{independent}
of the chosen quadrature rule. Then, starting with §\ref{sect:radius}, optimizing that condition number is the main objective of this paper.

\section{Numerical Stability}\label{sect:cond}

As we have seen in §\ref{sect:intro} and Figure~\ref{fig:condcomp} there are stability issues with using (\ref{eq:anm}) in the realm of finite precision arithmetic.
Specifically, small finite precision errors in the evaluation of the function $f$ can be amplified to large errors in the resulting
evaluation of the sum (\ref{eq:anm}). This error propagation is described by the condition number of the Cauchy integral and depends very much on the chosen
radius $r$ and on the underlying error concept.

\subsection{Absolute Errors}\label{sect:abs}

Any perturbation $\hat f$ of the function $f$ within a bound of the \emph{absolute} error,
\[
\| f - \hat f\|_{L^\infty(C_r)} \leq \epsilon,
\]
induces perturbations $\hat a_n(r)$ and $\hat a_n(r,m)$ of the Cauchy integral (\ref{eq:an}) and of its approximation (\ref{eq:anm}) by the trapezoidal sum. Note that even though
the value of the Cauchy integral does not depend on the specific choice of the radius $r$ (within the range $0 < r <R$), the perturbed value $\hat a_n(r)$ generally does
depend on it. Because both the integral and the sum are re-scaled mean values of $f$, we get the simple estimates
\begin{equation}\label{eq:abserr}
|r^n a_n - r^n \hat a_n(r)| \leq \epsilon,\qquad |r^n a_n(r,m) - r^n \hat a_n(r,m)| \leq \epsilon.
\end{equation}
Thus, the {\em normalized} Taylor coefficients $r^n a_n$ are well conditioned with respect to {\em absolute} errors (with condition number one); a fact that has basically already been
observed by \citeasnoun[p.~670]{362820}. There are indeed applications were absolute errors of normalized Taylor coefficients are a reasonable concept to look at, which then typically leads to a proper choice of the radius $r$.
We give one such example from our work on the numerical evaluation of distributions in random matrix theory \cite{Bornemann2}.

\begin{figure}[tbp]
\begin{center}
\begin{minipage}{0.495\textwidth}
\begin{center}
\includegraphics[width=\textwidth]{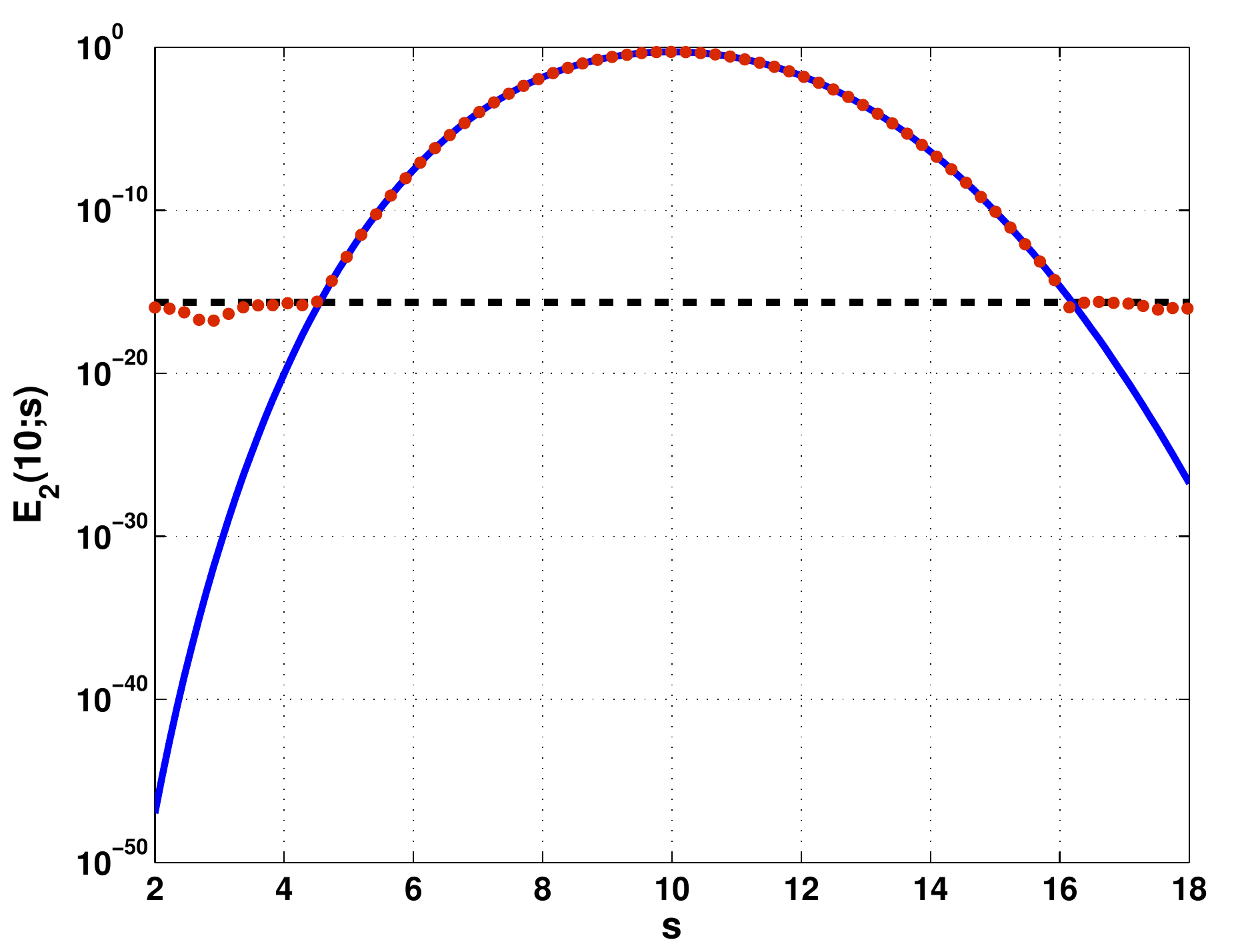}\\*[-1.5mm]
{\tiny a.\;\; gap probability $E_2(10;s)$ of GUE }
\end{center}
\end{minipage}
\hfill
\begin{minipage}{0.495\textwidth}
\begin{center}
\includegraphics[width=\textwidth]{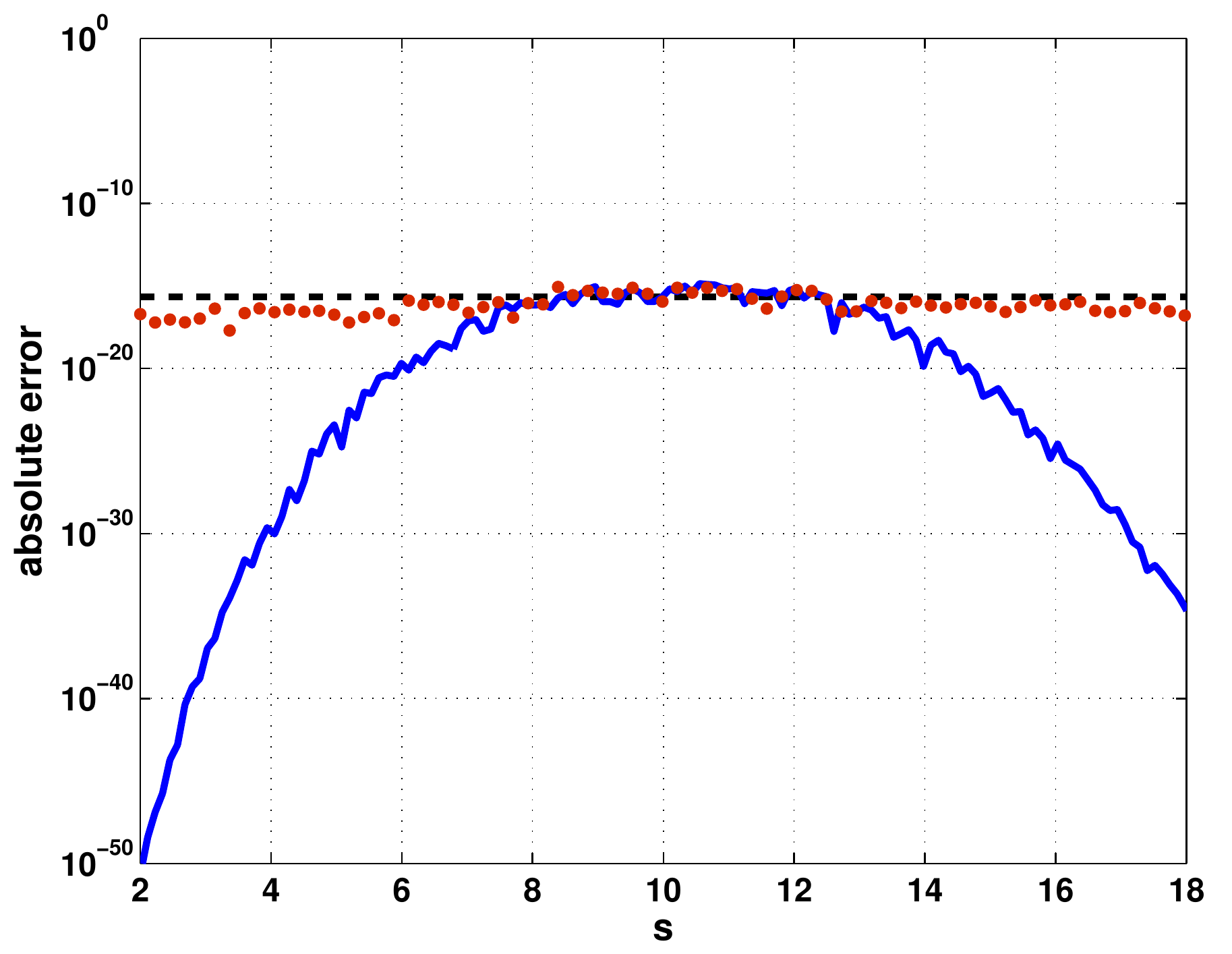}\\*[-1.5mm]
{\tiny b.\;\; absolute error}
\end{center}
\end{minipage}
\end{center}
\caption{Left: the gap probability $E_2(10;s)$ of GUE calculated as the $10$-th Taylor coefficient of a Fredholm determinant; right: the absolute error of the calculation. The dotted lines (red) show the results
for the radius $r=1$; the solid lines (blue) show the results for the quasi-optimal radius $r_\diamond$, which depends on $s$ (see Example~\ref{ex:rmt2} and Figure~\ref{fig:rmt2}). The dashed horizontal lines show the level of
machine precision.}\label{fig:rmt1}
\end{figure}

\begin{example}\label{ex:rmt1}
The sequence of hermitian random matrices $X_N \in \C^{N\times N}$ with entries
\[
(X_N)_{j,j} = \xi_{j,j},\quad (X_N)_{j,k} = \frac{\xi_{j,k}+i \eta_{j,k}}{\sqrt{2}},\quad (X_N)_{k,j} = \frac{\xi_{j,k}-i \eta_{j,k}}{\sqrt{2}}\qquad(j<k),
\]
formed from i.i.d. families of real standard normal random variables $\xi_{i,j}$ and $\eta_{i,j}$, is called the \emph{Gaussian Unitary Ensemble} (GUE).\footnote{In Matlab,
the sequence of commands
\[
\text{\tt X = randn(N) + 1i*randn(N); X = (X+X')/2;}
\]
can be used to sample from the $N \times N$ GUE.} The GUE is of considerable interest since, on the one hand,
various statistical properties of the spectrum $\sigma(X_N)$ enjoy explicit analytic formulas. One the other hand, in the large matrix limit $N \to \infty$,
by a kind of ``universal'' limit law, these properties are often known (or conjectured) to hold for other families of random matrices, too.
An example of such a property concerns the \emph{bulk scaling} $\hat{X}_N= \pi^{-1} N^{1/2}X_N$, for which the mean spacing of the scaled
eigenvalues goes, in the large matrix limit, to one. Basic statistical quantities then considered are the \emph{gap probabilities}\footnote{We denote by $\# S$ the number of elements in a finite set $S$.}
\[
E_2(n;s) = \lim_{N\to\infty} \prob(\#(\sigma(\hat{X}_N) \cap [0,s]) = n),
\]
the probability that, in the large matrix limit, exactly $n$ of the scaled eigenvalues are located in the interval $[0,s]$.
(For Wigner hermitian matrices with a subexponential decay, \citeasnoun{Tao} have, just recently, established the universality of $E_2(0;s)$.)
The generating function of the sequence $E_2(0;s), E_2(1;s), E_2(2,s), \ldots$ is given by the Fredholm determinant of Dyson's sine kernel $K(x,y))=\sinc(\pi(x-y))$ \citeaffixed[§6.4]{MR2129906}{see, e.g.,}, namely,
\[
\sum_{k=0}^\infty E_2(k;s)\, z^k = \det\left(I-(1-z) K |_{L^2(0,s)}\right).
\]
For given values of $n$ and $s$, the strategy to calculate $E_2(n;s)$ is as follows. First, by using the method of \citeasnoun{Bornemann} for the numerical evaluation of
Fredholm determinants, the function
\[
f(z) = \det\left(I-(1-z) K |_{L^2(0,s)}\right)
\]
can be evaluated for {\em complex} arguments of $z$ up to an absolute error of about $\epsilon = 10^{-15}$. Second, the Taylor coefficients $E_2(n;s)$ of $f$ are calculated by means of Cauchy integrals.
Now, since these Taylor coefficients are \emph{probabilities}, the number $1$ is the natural scale for the \emph{absolute} errors, which makes $r=1$ the proper choice for the radius \cite[§4.3]{Bornemann2}.
By (\ref{eq:abserr}), we expect an absolute error of about $\epsilon = 10^{-15}$, which is confirmed by numerical experiments, see Figure~\ref{fig:rmt1}.
However, the figure also illustrates that there is a complete loss
of information about the \emph{tails} (that is, those very small probabilities which are about the size of the error level or smaller). By controlling the radius with respect to relative errors using the method
exposed in the rest of this paper, we were able to increase the accuracy of the tails considerably. The reader should note, however, that in most applications of
random matrix theory the accurate calculation of the tails would be irrelevant. It typically suffices to just identify such small probabilities as being very small;
thus the concept of absolute error is completely appropriate in this example.
\end{example}

There are examples, were small \emph{absolute} errors of the normalized Taylor coefficients $r^n a_n$ are not accurate enough. Because of
the super-geometric growth of the factorial, examples of such cases are the derivatives $f^{(n)}(0) = n!\,a_n$, for high orders $n$. Accuracy will only survive the scaling
by $n!$ if the Taylor coefficients themselves already have small {\em relative} errors.

\subsection{Relative Errors}\label{sect:rel}

We now consider perturbations $\hat f$ of the function $f$ whose relative error can be rendered in the form
\begin{equation}\label{eq:relmodel}
\hat f(r e^{i\theta}) = f(re^{i\theta}) (1+\epsilon_r(\theta)),\qquad \|\epsilon_r\|_\infty \leq \epsilon.
\end{equation}
Such a perturbation induces a perturbation $\hat a_n(r)$ of the Cauchy integral (\ref{eq:an}) which satisfies the straightforward bound \cite[Lemma~9.1]{MR1949263}
\begin{equation}\label{eq:relbound}
\frac{|a_n - \hat a_n(r)|}{|a_n|} \leq \kappa(n,r) \cdot \epsilon
\end{equation}
of its relative error (assuming $a_n \neq 0$), where
\begin{equation}\label{eq:kappa}
\kappa(n,r) = \frac{\displaystyle\int_0^{2\pi} \left|f(r e^{i\theta})\right|\,d\theta}{\left|\displaystyle\int_0^{2\pi} e^{-in\theta} f(r e^{i\theta})\,d\theta\right|} \geq 1
\end{equation}
is the condition number of the Cauchy integral.\footnote{This condition number is completely \emph{independent} of how the Cauchy integral is actually computed.} Note that this number measures the amount of cancelation within the Cauchy integral: $\kappa(n,r) \gg 1$ indicates a large amount
of cancelation, whereas $\kappa(n,r) \approx 1$ if there is virtually no cancelation; see Figure~\ref{fig:exp} for an illustration.

Correspondingly there are perturbations $\hat a_n(r,m)$ of the trapezoidal sum approximations (\ref{eq:anm}) of the Cauchy integrals. They satisfy the same type of bound, namely
\begin{equation}\label{eq:relbounddiscrete}
\frac{|a_n(r,m) - \hat a_n(r,m)|}{|a_n(r,m)|} \leq \kappa_m(n,r) \cdot \epsilon,
\end{equation}
of its relative error (assuming $a_n(r,m) \neq 0$), where
\begin{equation}\label{eq:kappadiscrete}
\kappa_m(n,r) = \frac{\displaystyle\sum_{j=0}^{m-1} \left|f(r e^{2\pi ij /m})\right|}{\left|\displaystyle\sum_{j=0}^{m-1} e^{-2\pi ijn/m} f(r e^{2\pi ij /m})\right|} \geq 1
\end{equation}
is the condition number of the trapezoidal sum \cite[p.~538]{MR1927606}.

\begin{figure}[tbp]
\begin{center}
\begin{minipage}{0.79\textwidth}
{\small
\begin{verbatim}
function [val,err,kappa,m] = D(f,n,r)

fac = exp(gammaln(n+1)-n*log(r));
cauchy = @(t) fac*(exp(-n*t).*f(r*exp(t)));
m = max(n+1,8); tol = 1e-15;
s = cauchy(2i*pi*(1:m)/m); val1 = mean(s); err1 = NaN;
while m < 1e6
    m = 2*m;
    s = reshape([s; cauchy(2i*pi*(1:2:m)/m)],1,m);
    val = mean(s); kappa = mean(abs(s))/abs(val);
    err0 = abs(val-val1)/abs(val); err = (err0/err1)^2*err0;
    if err <= kappa*tol || ~isfinite(kappa); break; end
    val1 = val; err1 = err0;
end
\end{verbatim}}
\end{minipage}
\end{center}
\caption{Matlab implementation of calculating $f^{(n)}(0)$ using the Cauchy integral (\ref{eq:an}) with radius $r$, approximated by
trapezoidal sums. It assumes
$f$ to be evaluated up to an relative error {\tt tol}. The number {\tt m} of nodes is determined by a successive doubling procedure until the estimated
relative error satisfies a bound corresponding to the level of round-off error given by (\ref{eq:relbound}). The error estimate \protect\citeaffixed[Eq.~(4.12)]{805983}{see}
is based on the assumption of a geometric rate of convergence (\ref{eq:geomconv}) which is excellent if $R<\infty$ and an overestimate if $R=\infty$. The initialization
of {\tt m} satisfies the sampling condition (\ref{eq:sampling}). The doubling of nodes is arranged in a way that already computed values of $f$ are re-used.}\label{fig:D}
\end{figure}

If $m$ is chosen large enough such that the trapezoidal sum $a_n(r,m)$ is a good approximation of the Cauchy integral $a_n$, then we typically also have
\[
\frac{1}{m} \displaystyle\sum_{j=0}^{m-1} \left|f(r e^{2\pi ij /m})\right| \approx \frac{1}{2\pi} \int_0^{2\pi} \left|f(r e^{i\theta})\right|\,d\theta.
\]
This is because the integrand $|f(r e^{i\theta})|$ is a smooth \emph{periodic} function of $\theta$ and the trapezoidal sum therefore gives
 excellent approximations of this integral, too.\footnote{By the Euler--Maclaurin
summation formula, the approximation error is of \emph{arbitrary} algebraic order \cite[Thm.~9.16]{MR1949263}.} Moreover, because of positivity, there are no additional
stability issues here. That said, for reasonably large $m$, we have
\[
\kappa_m(n,r) \approx \kappa(n,r)
\]
as long as the computation of $a_n(r,m)$ is not completely unstable.
We use $\kappa(n,r)$  in the theory developed in this paper; but we use $\kappa_m(n,r)$ to monitor stability
in our implementation that is given in Figure~\ref{fig:D}.
In fact, the examples of Figure~\ref{fig:condcomp} show that $\kappa(n,r)$ gives an excellent prediction of the actual loss of (relative)
accuracy in the calculation of the Taylor coefficients; it thus models the dominant effect of the choice
of the radius $r$ (in fact, for \emph{any} stable and accurate quadrature rule):
\begin{equation}\label{eq:loss}
\text{\# lost significant digits} \approx \log_{10} \kappa(n,r).
\end{equation}

\section{Optimizing the Condition Number}\label{sect:radius}

\subsection{General Results on the Condition Number}\label{sect:genres}

It is convenient to rewrite the expression (\ref{eq:kappa}) that defines the condition number briefly as
\begin{equation}\label{eq:kappa2}
\kappa(n,r) = \frac{M_1(r)}{|a_n| r^n},
\end{equation}
using the mean of order $1$ of the modulus of $f$ on the circle $C_r$,
\begin{equation}\label{eq:M1}
M_1(r) = \frac{1}{2\pi} \int_0^{2\pi} \left|f(r e^{i\theta})\right|\,d\theta\,.
\end{equation}
Concerning the properties of $M_1$ we recall the following classical theorem, for the standard proof see \citeasnoun[p.~156]{MR0089895} or \citeasnoun[III.310]{MR0170986}.\nocite{Hardy1915}

\begin{theorem}[Hardy 1915] Let $f$ be given by a Taylor series with radius of convergence $R$. The mean value function $M_1$ satisfies, for $0< r< R$:
\begin{itemize}
\item[(a)] $M_1(r)$ is continuously differentiable;\\*[-3mm]
\item[(b)] if $f \not\equiv 0$, $\log M_1(r)$ is a convex function of $\log r$;\\*[-3mm]
\item[(c)] if $f \not\equiv \const$, $M_1(r)$ is strictly\footnote{The fact that the monotonicity is \emph{strict} has been added to Hardy's theorem by \citeasnoun{MR0033878}.} increasing.
\end{itemize}
\end{theorem}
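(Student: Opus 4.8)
The crux is part~(b); parts~(a) and (c) then follow with little extra effort, and (b) is the $L^1$-version of Hadamard's three-circles theorem, provable by the same device. The plan for (b) rests on the subharmonicity of $\log|f|$ in $D_R$ (harmonic off the isolated zeros of $f$, with value $-\infty$ at those zeros), which makes, for every real $\lambda$, the function $u_\lambda(z)=|z|^\lambda|f(z)|=\exp\!\bigl(\lambda\log|z|+\log|f(z)|\bigr)$ subharmonic on any annulus $\{0<\rho_1\le|z|\le\rho_2<R\}$, being the increasing convex function $\exp$ composed with a subharmonic function. First I would record the auxiliary lemma that the circular mean $m(r)=\tfrac1{2\pi}\int_0^{2\pi}u(re^{i\theta})\,d\theta$ of a subharmonic $u$ on an annulus is convex in $\log r$: expressing $\Delta u$ in polar coordinates and integrating over $\theta$ yields $\tfrac1r\tfrac{d}{dr}\bigl(r\,m'(r)\bigr)=\tfrac1{2\pi}\int_0^{2\pi}(\Delta u)(re^{i\theta})\,d\theta\ge 0$, so $t\mapsto m(e^t)$ has nonnegative second derivative; the corners of $u=|f|$ at the zeros of $f$ are dealt with by first mollifying $u$ to a smooth subharmonic function and letting the mollification parameter tend to zero. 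Applied to $u_\lambda$, this gives that $r\mapsto r^\lambda M_1(r)$ is convex in $\log r$ for every $\lambda\in\R$.

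It remains, for (b), to upgrade this to convexity of $\log M_1$ in $\log r$ (genuinely stronger than convexity of $M_1$). Given $\rho_1<\rho_2$ in $(0,R)$ I would pick $\lambda$ with $\rho_1^\lambda M_1(\rho_1)=\rho_2^\lambda M_1(\rho_2)$; then $t\mapsto e^{\lambda t}M_1(e^t)$ is convex and agrees at the two endpoints $t_j=\log\rho_j$, hence lies below the constant chord joining them, i.e. $\rho^\lambda M_1(\rho)\le\rho_1^\lambda M_1(\rho_1)$ for $\rho_1\le\rho\le\rho_2$; taking logarithms and inserting the value of $\lambda$ turns this into the three-circles inequality $\log M_1(\rho)\le(1-t)\log M_1(\rho_1)+t\log M_1(\rho_2)$ whenever $\log\rho=(1-t)\log\rho_1+t\log\rho_2$. (One could instead run the classical three-circles argument directly, applying the maximum modulus principle to $z^pf(z)^q$ for rational $p/q\to\lambda$, which avoids subharmonic-function theory altogether.)

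For (a) the plan is differentiation under the integral sign. For fixed $\theta$, $r\mapsto|f(re^{i\theta})|$ is smooth wherever $f(re^{i\theta})\ne 0$, with derivative $\Re\!\bigl(e^{i\theta}\,\overline{f(re^{i\theta})}\,f'(re^{i\theta})/|f(re^{i\theta})|\bigr)$, of modulus at most $|f'(re^{i\theta})|$, which is bounded uniformly for $r$ in a compact subinterval of $(0,R)$ and all $\theta$; the corresponding difference quotients obey the same bound. Since $f\not\equiv 0$ has isolated zeros, each circle $C_r$ carries only finitely many, so the $r$-derivative of the integrand exists for all but finitely many $\theta$; dominated convergence then gives that $M_1$ is differentiable with $M_1'(r)=\tfrac1{2\pi}\int_0^{2\pi}\partial_r|f(re^{i\theta})|\,d\theta$, and a second application of dominated convergence (the integrand converges a.e.\ and stays bounded as $r\to r_0$) shows $M_1'$ is continuous. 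The case $f\equiv0$ is trivial.

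For (c), monotonicity follows from (b): writing $f(z)=z^kh(z)$ with $h(0)\ne0$, one has $M_1(r)=r^kM_1^h(r)$, and $\log M_1^h$ is convex in $\log r$ and bounded below near $\log r=-\infty$ (since $M_1^h(r)\to|h(0)|>0$), hence non-decreasing — so $M_1$ is non-decreasing, and already strictly increasing when $k\ge 1$. When $k=0$, suppose $M_1(\rho_1)=M_1(\rho_2)$ with $\rho_1<\rho_2$; convexity plus monotonicity force $M_1$ to be constant on $(0,\rho_2]$, equal to $\lim_{r\to0^+}M_1(r)=|f(0)|$, so that equality holds in $|f(0)|=\bigl|\tfrac1{2\pi}\int_0^{2\pi}f(re^{i\theta})\,d\theta\bigr|\le M_1(r)$ for every $r\in(0,\rho_2)$; equality in this triangle inequality forces $f(re^{i\theta})$ to have $\theta$-independent argument on $C_r$ for all but countably many $r$, hence $f$ maps an open family of circles into rays, and the open mapping (or identity) theorem makes $f$ constant — contradicting the hypothesis of~(c). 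The genuinely delicate point throughout is the non-smoothness of $|f|$ at the zeros of $f$: it must be handled in the mollification step behind the convexity of circular means and again in the differentiation under the integral sign for~(a); everything else is bookkeeping around two classical ideas, the subharmonicity of $\log|f|$ and the $z^\lambda f$ gauge transformation.
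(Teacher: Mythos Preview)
The paper does not supply its own proof of this theorem: it simply cites Titchmarsh and P\'olya--Szeg\H{o} for ``the standard proof''. So there is nothing in the paper to compare your argument against; I can only assess it on its own merits.

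Your approach is one of the standard ones and is essentially correct. The scheme for (b)---using that $|z|^\lambda|f(z)|$ is subharmonic, hence its circular means are convex in $\log r$, then choosing $\lambda$ to equalise the endpoints---is exactly the gauge-transformation proof of Hardy's convexity theorem. Parts (a) and (c) are handled adequately.

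Two small points you should tighten. In (c) you write that $\log M_1^h$ is ``bounded below near $\log r=-\infty$, hence non-decreasing''; boundedness \emph{below} does not force a convex function to be non-decreasing (think of $e^{-t}$). What you actually have, and what you need, is that $\log M_1^h(e^t)\to\log|h(0)|$ is a \emph{finite limit} as $t\to-\infty$, hence the function is bounded \emph{above} on a left half-line, and that is what rules out any strictly decreasing stretch of a convex function. Second, in the endgame of (c) you say ``$f$ maps an open family of circles into rays'' and invoke the open mapping theorem; as stated this is loose, since a union of rays indexed by $r$ could in principle be open. The missing observation is that $f(0)=c_r|f(0)|$ (from $2\pi f(0)=\int_0^{2\pi}f(re^{i\theta})\,d\theta=c_r\cdot 2\pi M_1(r)=c_r\cdot 2\pi|f(0)|$) forces $c_r$ to be \emph{the same ray} for every $r$, and then $f(D_{\rho_2})$ is genuinely contained in a single line, contradicting openness.
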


Because of $\log \kappa(n,r) = \log M_1(r) - \log|a_n| - n  \log r$, there are some immediate consequences for the condition number.

\begin{cor}\label{cor:Hardy} Let $f \not\equiv 0$ be given by a Taylor series with radius of convergence $R$.
Then, for $n$ with $a_n \neq 0$ and for $0< r< R$:
\begin{itemize}
\item[(a)] $\kappa(n,r)$ is continuously differentiable with respect to $r$;\\*[-3mm]
\item[(b)] $\log \kappa(n,r)$ is a convex function of $\log r$.
\end{itemize}
\end{cor}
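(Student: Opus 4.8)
The plan is to read off both claims from the additive decomposition
\[
\log \kappa(n,r) = \log M_1(r) - \log|a_n| - n\log r ,
\]
which is the identity already noted just before the statement. Before using it one should observe that it is meaningful, i.e. that $M_1(r) > 0$ for $0 < r < R$: if $M_1(r)$ vanished, then the continuous nonnegative function $\theta \mapsto |f(re^{i\theta})|$ would vanish identically, hence $f$ would vanish on all of $C_r$ and therefore, by the identity theorem, $f \equiv 0$, contradicting the hypothesis $f \not\equiv 0$; and $a_n \neq 0$ is assumed. So $\log M_1(r)$ and $\log\kappa(n,r)$ are well defined on $(0,R)$.

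For part (a) I would write $\kappa(n,r) = M_1(r)/(|a_n|\,r^n)$ as a quotient of two functions of $r$ that are continuously differentiable on $(0,R)$ — the numerator by part (a) of Hardy's theorem, the denominator because $r \mapsto |a_n|\,r^n$ is a monomial with $a_n\neq 0$ — whose denominator is nowhere zero on $(0,R)$. Hence the quotient is continuously differentiable in $r$, which is the assertion.

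For part (b) I would pass to the variable $t = \log r$. Then $t \mapsto \log M_1(e^{t})$ is convex: this is exactly the content of part (b) of Hardy's theorem. The term $-\log|a_n|$ is a constant, and $-n\log r = -nt$ is affine in $t$. Since the sum of a convex function and an affine function is convex, $t \mapsto \log\kappa(n,e^{t})$ is convex, i.e. $\log\kappa(n,r)$ is a convex function of $\log r$.

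I do not expect any real obstacle here: once the decomposition above is in hand, the corollary is essentially bookkeeping on top of Hardy's theorem. The only point that genuinely needs a sentence of justification is the strict positivity of $M_1(r)$, handled above by the identity theorem, which is what makes the passage to logarithms legitimate.
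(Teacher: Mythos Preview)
Your proposal is correct and follows precisely the route indicated in the paper: the corollary is derived from the decomposition $\log\kappa(n,r)=\log M_1(r)-\log|a_n|-n\log r$ by invoking Hardy's theorem for the $M_1$ term and observing that the remaining terms are affine in $\log r$. Your additional remark that $M_1(r)>0$ (so that the logarithms are legitimate) is a welcome clarification the paper leaves implicit.
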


We now study the behavior of $\kappa(n,r)$ as $r \to 0$ and $r \to \infty$. The first direction is simple and gives us the expected numerical instability for small radii.

\begin{theorem}\label{thm:R0} Let $f \not\equiv 0$ be given by a Taylor series with radius of convergence $R$ and let $a_{n_0}$ be its first non-zero coefficient. Then, for $n>n_0$,
\[
\kappa(n,r) \to \infty \qquad (r \to 0);
\]
but $\kappa(n_0,r) \to 1$.
\end{theorem}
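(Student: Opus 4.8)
The plan is to read everything off the formula \eqref{eq:kappa2}, namely
\[
\kappa(n,r) = \frac{M_1(r)}{|a_n|\,r^n},
\]
by pinning down the exact rate at which $M_1(r)\to 0$ as $r\to 0$. The claim is that $M_1(r)\sim |a_{n_0}|\,r^{n_0}$, and both assertions of the theorem follow at once from this.

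First I would establish the asymptotics of $M_1$. Split the Taylor series as $f(z) = a_{n_0}z^{n_0} + g(z)$ with $g(z) = \sum_{k>n_0} a_k z^k$. Fixing any $r_0$ with $0<r_0<R$, for $0<r\le r_0$ we have, uniformly in $\theta$,
\[
|g(re^{i\theta})| \le \sum_{k>n_0} |a_k|\,r^k \le r^{n_0+1}\sum_{k>n_0}|a_k|\,r_0^{k-n_0-1} = C\,r^{n_0+1},
\]
the series on the right converging because $r_0<R$. Hence on the circle $C_r$,
\[
\bigl|\,|f(re^{i\theta})| - |a_{n_0}|\,r^{n_0}\,\bigr| \le |g(re^{i\theta})| \le C\,r^{n_0+1}
\]
by the triangle inequality both ways, and integrating over $\theta$ and dividing by $2\pi$ gives
\[
M_1(r) = |a_{n_0}|\,r^{n_0} + O(r^{n_0+1}) = |a_{n_0}|\,r^{n_0}\bigl(1 + O(r)\bigr)\qquad (r\to 0).
\]

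Substituting into \eqref{eq:kappa2} yields, for any $n$ with $a_n\neq 0$,
\[
\kappa(n,r) = \frac{|a_{n_0}|}{|a_n|}\,r^{\,n_0-n}\bigl(1+O(r)\bigr)\qquad (r\to 0).
\]
For $n=n_0$ the power of $r$ disappears and $\kappa(n_0,r)\to 1$. For $n>n_0$ the exponent $n_0-n$ is negative, so $r^{\,n_0-n}\to\infty$ and therefore $\kappa(n,r)\to\infty$. This completes the argument.

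There is essentially no serious obstacle here; the only point that needs a moment's care is the \emph{uniformity in $\theta$} of the tail estimate $|g(re^{i\theta})| = O(r^{n_0+1})$, which is what licenses passing the bound under the integral sign. That uniformity is automatic because the bound depends only on $|z|=r$ and not on $\arg z$, once one uses a fixed comparison radius $r_0<R$ to dominate $\sum_{k>n_0}|a_k|\,r_0^{k-n_0-1}$ by a convergent constant. (Note also that $\kappa(n_0,r)\ge 1$ always, by \eqref{eq:kappa}, so the limit $1$ is approached from above, consistent with the convexity statement of Corollary~\ref{cor:Hardy}.)
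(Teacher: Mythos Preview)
Your proof is correct and follows essentially the same approach as the paper: establish the expansion $M_1(r) = |a_{n_0}|\,r^{n_0} + O(r^{n_0+1})$ as $r\to 0$, substitute into $\kappa(n,r) = M_1(r)/(|a_n|\,r^n)$, and read off both limits. The paper simply states this expansion and its consequence $\kappa(n,r)\sim \frac{|a_{n_0}|}{|a_n|}r^{n_0-n}$ without further comment, whereas you have filled in the uniform tail estimate justifying it; so your argument is a more detailed version of the same proof.
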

\begin{proof}
From the expansion
\[
M_1(r) = \frac{1}{2\pi} \int_0^{2\pi} |f(r e^{i\theta})|\,d\theta = |a_{n_0}| r^{n_0} + O(r^{n_0+1})\qquad (r\to 0)
\]
we get
\[
\kappa(n,r) \sim \frac{|a_{n_0}|}{|a_n|} r^{n_0-n} \qquad (r\to 0)
\]
which implies both assertions.
\end{proof}

The other direction, $r \to R$, is more involved and depends on further properties of $f$. Let us begin with entire functions ($R=\infty$).

\begin{theorem}\label{thm:entire} Let $f$ be an entire function. If $f$ is transcendental then, for all $n \in \N$,
\[
\kappa(n,r) \to \infty \qquad (r \to \infty).
\]
If $f$ is a polynomial of degree $d$ then this results holds for all $n \neq d$, but $\kappa(d,r)\to 1$.
\end{theorem}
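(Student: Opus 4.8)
The plan is to analyze $\kappa(n,r) = M_1(r)/(|a_n| r^n)$ as $r\to\infty$ by comparing the growth of $M_1(r)$ with the elementary factor $r^n$. The central fact I would invoke is the classical relation between the integral mean $M_1(r)$ and the maximum modulus $M_\infty(r) = \max_{|z|=r} |f(z)|$, namely that $M_1(r)$ and $M_\infty(r)$ have the same order of growth up to subexponential factors; more precisely, for the polynomial case one has $M_1(r)\sim c\, r^d$ with $c>0$ as $r\to\infty$ (the leading coefficient dominates), and for the transcendental case $M_1(r)$ grows faster than any power of $r$. Actually, the cleanest route avoids $M_\infty$ altogether: since $f$ is entire, $M_1(r) \geq |a_k| r^k$ for every $k$ (this is just $|a_k| r^k = |I_k(f;r)| \leq M_1(r)$, using the averaging operators from Theorem~\ref{thm:conv}). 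That single inequality already does most of the work.

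First I would treat the transcendental case. Fix $n$. Since $f$ is transcendental, $a_k \neq 0$ for infinitely many $k$; pick any such $k > n$. Then
\[
\kappa(n,r) = \frac{M_1(r)}{|a_n| r^n} \geq \frac{|a_k| r^k}{|a_n| r^n} = \frac{|a_k|}{|a_n|}\, r^{k-n} \to \infty\qquad(r\to\infty),
\]
which gives the claim. (If $a_n = 0$ the statement is interpreted via the convention that $\kappa(n,r)$ is defined only when $a_n \neq 0$, exactly as in Corollary~\ref{cor:Hardy}; so I may assume $a_n\neq 0$ throughout.)

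Next the polynomial case. Write $f(z) = \sum_{k=0}^d a_k z^k$ with $a_d \neq 0$. For $n < d$, the same one-line bound with $k = d$ gives $\kappa(n,r) \geq (|a_d|/|a_n|)\, r^{d-n} \to \infty$. For $n > d$ we have $a_n = 0$, so there is nothing to prove (the statement excludes this case by the standing convention, matching the footnote after Theorem~\ref{thm:conv}); alternatively, if one insists on a value, $a_n(r,m) = 0$ for $m > d$ and the relative-error question is vacuous. Finally, for $n = d$: here $M_1(r) = \frac{1}{2\pi}\int_0^{2\pi} |f(re^{i\theta})|\,d\theta$ and $f(re^{i\theta}) = a_d r^d e^{id\theta}(1 + O(r^{-1}))$ uniformly in $\theta$ as $r\to\infty$, so $M_1(r) = |a_d| r^d (1 + o(1))$. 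Hence $\kappa(d,r) = M_1(r)/(|a_d| r^d) \to 1$.

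**Main obstacle.** There is essentially no hard step: the whole argument rests on the trivial inequality $|a_k| r^k \leq M_1(r)$ plus, for the delicate $n=d$ endpoint, a uniform asymptotic expansion of $|f(re^{i\theta})|$ that is routine for polynomials. The only point requiring a little care is bookkeeping around the convention "$a_n \neq 0$" so that the statement is not vacuously or misleadingly phrased when $a_n = 0$; I would handle this exactly as the paper already does for $\delta_m$ and $\kappa$. One could alternatively phrase the transcendental case via $\limsup_k |a_k|^{1/k} = 0$ and the lower bound $\kappa(n,r) \geq \sup_{k} |a_k| r^{k-n}/|a_n|$, but choosing a single witness index $k$ is simpler and entirely sufficient.
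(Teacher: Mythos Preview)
Your proof is correct and rests on the same core inequality as the paper's, namely Cauchy's estimate $|a_k|\,r^k \leq M_1(r)$. The paper runs the argument contrapositively (if $\liminf_{r\to\infty} M_1(r)/r^m < \infty$ then all $a_n$ with $n>m$ vanish, forcing $f$ to be a polynomial of degree $\leq m$), whereas you argue directly by picking a witness $k>n$ with $a_k\neq 0$; these are the same idea in two logically equivalent wrappers, and the $n=d$ endpoint is handled identically. One minor discrepancy: for $n>d$ the paper does not exclude the case by convention but simply declares $\kappa(n,r)=\infty$ from $a_n=0$, so your ``nothing to prove'' remark should be replaced by that one-line observation.
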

\begin{proof}
Let us assume that, for a particular $m \in \N$,
\[
\liminf_{r\to \infty} \frac{M_1(r)}{r^m} = \liminf_{r\to \infty} \frac{1}{2\pi r^m} \int_0^{2\pi} |f(r e^{i\theta})|\,d\theta < \infty.
\]
Then, for all $n>m$,
\[
0 \leq |a_n| \leq \liminf_{r\to \infty} \frac{1}{2\pi r^n} \int_0^{2\pi} |f(r e^{i\theta})|\,d\theta = 0;
\]
that is, $a_n = 0$; implying that $f$ is a polynomial of degree $d \leq m$. This proves
the assertion for transcendental $f$; and for the cases $n < d$ if $f$ is a polynomial of degree $d$.
The cases $n>d$ follow trivially from $a_n=0$ which implies $\kappa(n,r)=\infty$. Finally, the case $n=d$
gives, because of $|f(z)| = |a_d| |z|^d + O(|z|^{d-1})$ as $z \to \infty$,
\[
\kappa(d,r) = \frac{1}{2 \pi |a_d| r^d} \int_0^{2\pi} |f(r e^{i \theta})|\,d\theta = 1 + O(r^{-1}) \qquad (r \to \infty),
\]
which completes the proof.
\end{proof}

For finite radius of convergence, $R<\infty$, we recall the definition of the Hardy norm (the last equality follows from the monotonicity of $M_1$):
\begin{equation}\label{eq:Hardynorm}
\|f\|_{H^1(D_R)} = \sup_{0<r<R} M_1(r) = \lim_{r\to R} M_1(r).
\end{equation}
If $\|f\|_{H^1(D_R)} < \infty$ the function $f$ belongs to the Hardy space $H^1(D_R)$. From the \emph{strict} monotonicity
and differentiability of $M_1(r)$ we infer that the function
\[
\sigma(r) = \log M_1(r)
\]
satisfies $\sigma'(r)>0$ ($0<r<R$). Since $\log M_1(r)$ is convex in $\log r$, the function $r \sigma'(r)$ is monotonically increasing. Therefore, the limit
\begin{equation}\label{eq:nu}
\nu = \sup_{0<r<R} r \sigma'(r) = \lim_{r \to R} r \sigma'(r) > 0
\end{equation}
exists (with $\nu=\infty$ a possibility, however).

\begin{theorem}\label{thm:Rfin} Let $f$ be given by a Taylor series with finite radius of convergence $R<\infty$. Then, for $a_n \neq 0$,
\[
\lim_{r\to R} \kappa(n,r) = \frac{\|f\|_{H^1(D_R)}}{|a_n| R^n}.
\]
This is finite if and only if $f$ belongs to the Hardy space $H^1(D_R)$. If $n>\nu$ then $\kappa(n,r)$ is strictly decreasing for $0<r<R$;
whereas if $\nu=\infty$ then, for all $n$, $\kappa(n,r)$ is strictly increasing in the vicinity of $r=R$.
\end{theorem}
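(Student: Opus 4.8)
The plan is to phrase everything through $\log\kappa(n,r) = \sigma(r) - \log|a_n| - n\log r$, with $\sigma(r)=\log M_1(r)$ as defined just before the statement, and to read off all four assertions from Hardy's theorem together with the definitions of $\|f\|_{H^1(D_R)}$ and $\nu$ in (\ref{eq:Hardynorm}) and (\ref{eq:nu}).

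First I would handle the limit. By (\ref{eq:kappa2}), $\kappa(n,r)=M_1(r)/(|a_n|r^n)$; letting $r\to R$ and using the (monotone, possibly infinite) limit $M_1(r)\to\|f\|_{H^1(D_R)}$ from (\ref{eq:Hardynorm}) together with $r^n\to R^n$ gives $\kappa(n,r)\to\|f\|_{H^1(D_R)}/(|a_n|R^n)$. This value is finite exactly when $\|f\|_{H^1(D_R)}<\infty$, which is by definition the assertion $f\in H^1(D_R)$; so the first two claims come for free.

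For the monotonicity I would differentiate. Hardy's theorem gives $M_1\in C^1$, and $M_1(r)>0$ on $(0,R)$ because $f\not\equiv0$ has only isolated zeros on each circle, so $\sigma$ is $C^1$ and
\[
\frac{d}{dr}\log\kappa(n,r)=\sigma'(r)-\frac{n}{r}=\frac{1}{r}\bigl(r\sigma'(r)-n\bigr).
\]
Convexity of $\sigma$ in $\log r$ --- again Hardy's theorem --- makes $r\sigma'(r)$ monotonically increasing on $(0,R)$, as already recorded before (\ref{eq:nu}), with supremum and limit at $R$ equal to $\nu$. If $n>\nu$, then $r\sigma'(r)\leq\nu<n$ throughout $(0,R)$, so the bracket is $\leq\nu-n<0$ and $\kappa(n,\cdot)$ is strictly decreasing on all of $(0,R)$. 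If $\nu=\infty$, then for any fixed $n$ there is some $r_0<R$ with $r_0\sigma'(r_0)>n$, whence $r\sigma'(r)\geq r_0\sigma'(r_0)>n$ for $r_0<r<R$ by monotonicity; so the bracket is positive there and $\kappa(n,\cdot)$ is strictly increasing in the vicinity of $r=R$.

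I do not expect a real obstacle here: the whole argument is a short computation fed by the three properties of $M_1$ in Hardy's theorem and the bookkeeping around $\nu$. The only points needing a word of care are the positivity $M_1(r)>0$ (so that $\log M_1$ is differentiable) and the strictness of the inequalities, which is secured by the uniform bound $r\sigma'(r)\leq\nu$ in the first case and by the monotone lower bound $r\sigma'(r)\geq r_0\sigma'(r_0)>n$ on $(r_0,R)$ in the second.
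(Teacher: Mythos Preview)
Your argument is correct and matches the paper's own proof essentially line for line: both read off the limit from (\ref{eq:Hardynorm}), then differentiate $\log\kappa(n,r)=\sigma(r)-\log|a_n|-n\log r$ and use the monotonicity of $r\sigma'(r)$ (with supremum $\nu$) to settle the two monotonicity claims. Your treatment of the case $\nu=\infty$ via a threshold $r_0$ is a slight rephrasing of the paper's observation that $\sigma'(r)\to\infty$, but the substance is identical.
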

\begin{proof} The limit can be directly read-off  from (\ref{eq:Hardynorm}). If $n > \nu$, we have
\[
\frac{d}{dr} \log \kappa(n,r) = \sigma'(r) - n r^{-1} \leq (\nu - n) r^{-1}< 0\qquad (0<r<R),
\]
which shows that $\kappa(n,r)$ is strictly decreasing.
If $\nu = \infty$ then $\sigma'(r) \to \infty$ as $r\to R$, which implies
\[
\frac{d}{dr} \log \kappa(n,r) = \sigma'(r) - n r^{-1} \to \infty \qquad (r\to R).
\]
Hence, $\kappa(n,r)$ must be, for $r$ close to $R$, strictly increasing.
\end{proof}

\subsection{The Optimal Radius}\label{sect:optrad}

Optimizing the numerical stability of the Cauchy integrals means, by (\ref{eq:loss}), to choose a radius $r$ that minimizes the condition number $\kappa(n,r)$. The general
results of §\ref{sect:genres} imply that such a minimum actually exists. Indeed, assuming $n>n_0$ (see Theorem~\ref{thm:R0}), $a_n \neq 0$, and that $f$ is not a polynomial,\footnote{Polynomials are addressed by Theorem~\ref{thm:entire}: First, one detects the degree $d$ from $\lim_{r \to \infty} \kappa(d,r) = 1$; then, the cases $n < d$ are  dealt with as for entire transcendental $f$ of order $\rho=0$ (see §\ref{sect:prg}).}
we have the
following ingredients allowing the optimization:
\begin{itemize}
\item {\em continuity}\/: $\kappa(n,r)$ is continuous for $0 < r <R$ (Corollary~\ref{cor:Hardy}) and, if $R<\infty$ and $\|f\|_{H^1(D_R)} < \infty$, can be continuously continued to $r=R$ (Theorem~\ref{thm:Rfin});\\*[-3mm]
\item {\em convexity}\/: $\log \kappa(n,r)$ is convex in $\log r$ (Corollary~\ref{cor:Hardy});\\*[-3mm]
\item {\em coercivity}\/: $\kappa(n,r) \to \infty$ as $r \to 0$ (Theorem~\ref{thm:R0}) and, if $R=\infty$ (Theorem~\ref{thm:entire}) or if $R<\infty$  and $\|f\|_{H^1(D_R)} = \infty$ (Theorem~\ref{thm:Rfin}), as $r\to R$.\\*[-3mm]
\end{itemize}
Hence, by the strict monotonicity of the logarithm, the optimal condition number
\begin{equation}
\kappa_*(n) = \min_{0< r\leq R} \kappa(n,r)
\end{equation}
exists and is taken for the optimal radius\footnote{Since we have no proof of \emph{strict} convexity, we cannot exclude that the minimizing radius happens to be not unique (even though we have not encountered a single such example).
However, because of convexity, the set of all minimizing radii would form a closed interval. We therefore define $r_*(n)$ as the smallest minimizing radius; which, in view of (\ref{eq:relerrgeom}) and (\ref{eq:relerrentire}),
gives the best rates of approximation of the trapezoidal sums.}
\begin{equation}
r_*(n) = \argmin_{0< r \leq R} \kappa(n,r).
\end{equation}
Because the functions $r^{-n} M_1(r)$ and $\kappa(n,r)$ just differ by a factor that is independent of $r$ (namely, $|a_n|$), it is convenient to extend the definition of the optimal radius $r_*(n)$
to the case $a_n = 0$ by setting\footnote{Note that all the \emph{qualitative} results that we stated in §\ref{sect:genres} for $\kappa(n,r)$ hold \emph{verbatim} for $r^{-n}M_1(r)$, independently of whether $a_n\neq0$ or not.}
\begin{equation}\label{eq:roptdef}
r_*(n) = \argmin_{0< r \leq R} r^{-n} M_1(r).
\end{equation}

\begin{theorem}\label{thm:rn} Let the non-polynomial analytic function $f$ be given by a Taylor series with radius of convergence $R$. Then, the sequence $r_*(n)$ satisfies the monotonicity
\[
r_*(n) \leq r_*(n+1)\qquad (n > n_0)
\]
and has the limit $\lim_{n\to\infty} r_*(n) = R$. Furthermore, the case $r_*(n)=R$ is characterized by
\[
r_*(n)=R \quad\Rightarrow\quad \text{$R<\infty$, $\|f\|_{H^1(D_R)}<\infty$, \emph{and} $\nu < \infty$},
\]
and
\[
\text{$R<\infty$, $\|f\|_{H^1(D_R)}<\infty$, \emph{and} $\nu < n$} \quad\Rightarrow\quad r_*(n)=R.
\]
\end{theorem}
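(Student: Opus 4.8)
The plan is to work throughout with the function $\sigma(r) = \log M_1(r)$, which by Hardy's theorem is continuously differentiable and convex in $\log r$, so that $g(r) := r\sigma'(r)$ is a continuous, monotonically increasing function of $r$ on $(0,R)$. Since $\log(r^{-n}M_1(r)) = \sigma(r) - n\log r$ has derivative $r^{-1}(g(r) - n)$, the optimal radius $r_*(n)$ is characterized as the (smallest) point where $g(r) = n$, if such a point exists in $(0,R)$; otherwise, by coercivity and convexity, the minimum is attained at the boundary, and since $r\to 0$ is excluded by Theorem~\ref{thm:R0} (applied to $r^{-n}M_1$, which has the same qualitative behavior), we must have $r_*(n) = R$, which by Theorem~\ref{thm:Rfin} and the coercivity discussion forces $R<\infty$, $\|f\|_{H^1(D_R)}<\infty$, and $\nu<\infty$. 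This simultaneously establishes the first characterization of the case $r_*(n)=R$. For the converse implication, if $R<\infty$, $\|f\|_{H^1}<\infty$, and $\nu<n$, then $g(r) \leq \nu < n$ for all $r\in(0,R)$, so $\frac{d}{dr}\log(r^{-n}M_1(r)) = r^{-1}(g(r)-n) < 0$ throughout, whence $r^{-n}M_1(r)$ is strictly decreasing and the minimum over $(0,R]$ is attained only at $r=R$.

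For the monotonicity $r_*(n) \leq r_*(n+1)$: I would argue from the variational characterization. Since $g$ is increasing, the set $\{r : g(r) \geq n\}$ is an up-set (an interval of the form $[\rho_n, R)$ or empty), and $r_*(n)$ is its left endpoint $\rho_n$ (or $R$ if empty), at least when the minimum is interior; one checks $\rho_n \leq \rho_{n+1}$ directly because $g(r)\geq n+1$ implies $g(r)\geq n$. The boundary cases are handled by noting that $r_*(n)=R$ implies $\nu < \infty$ is the relevant threshold and $r_*(n+1)=R$ as well once $r_*(n)=R$, since the condition on $n$ only gets easier to satisfy as $n$ increases (indeed $\nu < n \Rightarrow \nu < n+1$); some care is needed because $r_*(n)=R$ does not immediately give $\nu<n$, only $\nu<\infty$, so I would instead argue that once $r^{-n}M_1(r)$ is decreasing near $R$, so is $r^{-(n+1)}M_1(r)$, which keeps the minimizer at $R$.

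For the limit $\lim_{n\to\infty} r_*(n) = R$: if $R<\infty$ and $\nu<\infty$, then $r_*(n)=R$ for all $n>\nu$ by the converse implication just proved, so the sequence is eventually constant equal to $R$. If $\nu = \infty$ (whether $R<\infty$ or not — the entire case has $\nu=\infty$ since $M_1(r)$ grows faster than any power of $r$ unless $f$ is a polynomial), then $g(r) = r\sigma'(r) \to \infty$ as $r\to R$, so for each $n$ the equation $g(r)=n$ has a solution, and since $g$ is increasing these solutions $\rho_n$ form an increasing sequence; I would show $\rho_n \to R$ by contradiction: if $\rho_n \leq r_0 < R$ for all $n$, then $n = g(\rho_n) \leq g(r_0) < \infty$ for all $n$, impossible. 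The main obstacle I anticipate is the bookkeeping around non-uniqueness of the minimizer and the precise matching of the two one-directional implications about $r_*(n)=R$ — in particular, being careful that the definition of $r_*(n)$ as the \emph{smallest} minimizer interacts correctly with the interval structure of the minimizing set, and that the strict vs.\ non-strict monotonicity of $g$ (we only know $g$ is increasing, not strictly) does not create ties that break the argument; these are resolved by the convexity of $\sigma$ in $\log r$, which guarantees the minimizing set is a (possibly degenerate) closed interval.
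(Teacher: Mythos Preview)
Your proof is correct, but it takes a different route from the paper's. You work throughout with the first-order condition, characterizing $r_*(n)$ via the equation $g(r)=r\sigma'(r)=n$ and exploiting the monotonicity of $g$; the monotonicity of $r_*(n)$ and the limit $r_*(n)\to R$ then follow from the order structure of the level sets of $g$. The paper instead argues directly from the variational definition, without ever invoking differentiability: for monotonicity it observes that for any $r<r_*(n)$,
\[
r_*(n)^{-(n+1)}M_1(r_*(n)) \leq r_*(n)^{-1}\cdot r^{-n}M_1(r) < r^{-(n+1)}M_1(r),
\]
so no point below $r_*(n)$ can minimize $r^{-(n+1)}M_1(r)$; for the limit it takes $n$-th roots of the optimality inequality $r_*(n)^{-n}M_1(r_*(n))\leq r^{-n}M_1(r)$ and lets $n\to\infty$, yielding $r_0^{-1}\leq r^{-1}$ for any $r>r_0=\lim r_*(n)$, a contradiction unless $r_0=R$. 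The characterization of $r_*(n)=R$ is, in both approaches, read off from Theorem~\ref{thm:Rfin}.

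What each approach buys: the paper's argument is shorter and more robust---it uses only the optimality and positivity of $M_1$, not the differentiability from Hardy's theorem, and sidesteps the bookkeeping about non-strict monotonicity of $g$ and non-uniqueness of minimizers that you (rightly) flag as requiring care. Your approach, on the other hand, gives a more explicit picture of \emph{where} $r_*(n)$ sits (as the first crossing of $g$ through level $n$), which is conceptually useful and aligns naturally with the later saddle-point characterization in \S\ref{sect:saddle}.
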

\begin{proof} Because of the optimality of $r_*(n)$ and since $M_1(r)>0$, we have, for $0< r < r_*(n)$,
\[
r_*(n)^{-(n+1)} M_1(r_*(n)) \leq r_*(n)^{-1} r^{-n} M_1(r) < r^{-(n+1)} M_1(r).
\]
Hence, the optimal radius $r_*(n+1)$ must satisfy $r_*(n+1) \geq r_*(n)$. This monotonicity implies that $r_0 = \lim_{n\to\infty} r_*(n)$ exists. Let us assume that $r_0 < R$. Then, for each $r_0<r<R$,
by taking the
limit $n \to \infty$ in
\[
r_*(n)^{-1} M_1(r_*(n))^{1/n} \leq r^{-1} M_1(r)^{1/n},
\]
and recalling the continuity of $M_1$, we conclude $r_0^{-1} \leq r^{-1}$. Since this contradicts the choice $r_0<r$, we must have $r_0 = R$. The characterization of $r_*(n)=R$ follows straightforwardly
from Theorem~\ref{thm:Rfin}.
\end{proof}

Bounded analytic functions $f$ that belong to the Hardy space $H^1(D_R)$ are known to possess boundary values \cite[§II.3]{MR628971}; that is, the radial limits
\[
f(R e^{i\theta}) = \lim_{r\to R} f(r e^{i\theta})
\]
exist for almost all angles $\theta$. These boundary values form an $L^1$-function,
\[
\|f\|_{H^1(D_R)} = \frac{1}{2\pi} \int_0^{2\pi} |f(Re^{i\theta})|\,d\theta,
\]
whose Fourier coefficients are just the normalized Taylor coefficients of $f$:
\[
a_n R^n = \frac{1}{2\pi} \int_0^{2\pi} e^{-i n\theta} f(Re^{i\theta})\,d\theta\qquad (n=0,1,2,\ldots).
\]
As the following theorem shows, this fact is bad news for the optimal condition number of such functions for large $n$: it grows beyond all bounds, at a rate that is all the more faster the
more regular the boundary values of $f$ are.

\begin{theorem}\label{thm:hardy} Let the analytic function $f$ be given by a Taylor series with finite radius of convergence $R<\infty$. If $f \in H^1(D_R)$ then
\[
\kappa_*(n) \to \infty\qquad (n\to\infty).
\]
For boundary values of $f$ belonging to the class\footnote{$C^{k,\alpha}$ denotes the functions that are $k$ times continuously differentiable with a $k$-derivative satisfying a Hölder condition of order $0 \leq \alpha \leq 1$.}
$C^{k,\alpha}(\mathcal{C}_R)$ the optimal condition number grows at least as fast as $\kappa_*(n) \geq c n^{k+\alpha}$ for some constant $c>0$.
\end{theorem}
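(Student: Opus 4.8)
The plan is to exploit the formula $\kappa_*(n) = \kappa(n, r_*(n))$ together with the monotonicity $r_*(n)\uparrow R$ from Theorem~\ref{thm:rn}, and to compare $\kappa(n,r_*(n))$ with the limiting value $\kappa(n,R) = \|f\|_{H^1(D_R)}/(|a_n|R^n)$ furnished by Theorem~\ref{thm:Rfin}. Since $r_*(n)$ minimizes $\kappa(n,\cdot)$, we trivially have $\kappa_*(n)\leq \kappa(n,R)$, which is the \emph{wrong} direction; the substance of the theorem is a \emph{lower} bound on $\kappa_*(n)$. The natural route is: first establish a lower bound on $\kappa(n,r)$ that holds \emph{uniformly in $r$}, then take the infimum over $r$.

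First I would bound the denominator of $\kappa(n,r)$. For $a_n\neq 0$ and any $0<r<R$ we have, by (\ref{eq:an}), $|a_n| r^n = |I_n(f;r)| \leq M_1(r) \leq \|f\|_{H^1(D_R)}$, so that $r^{-n}M_1(r) = |a_n|\kappa(n,r) \geq |a_n|$ is not yet enough. Instead, divide the numerator lower bound $M_1(r)\geq M_1(r_0)$ (for any fixed $r_0<r$, using monotonicity from Hardy's theorem) by the denominator $|a_n| r^n$, giving $\kappa(n,r) \geq M_1(r_0)/(|a_n| r^n) \geq M_1(r_0)/(|a_n| R^n)$. Taking the infimum over $r\leq R$ (the right side is already $r$-independent) yields $\kappa_*(n) \geq M_1(r_0)/(|a_n|R^n)$ for every fixed $r_0<R$. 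Now I need the behavior of $|a_n|R^n$. For the first claim ($f\in H^1$, so $\kappa_*(n)\to\infty$): since the boundary values lie in $L^1(\mathcal{C}_R)$ and $a_nR^n$ are their Fourier coefficients, the Riemann--Lebesgue lemma gives $a_nR^n\to 0$; hence $|a_n|R^n \to 0$ while $M_1(r_0)$ is a fixed positive constant, so $\kappa_*(n)\geq M_1(r_0)/(|a_n|R^n)\to\infty$.

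For the quantitative claim ($C^{k,\alpha}$ boundary values), the same inequality $\kappa_*(n)\geq M_1(r_0)/(|a_n|R^n)$ reduces everything to the decay rate of the Fourier coefficients of a $C^{k,\alpha}$ periodic function. The classical estimate is $|a_nR^n| = O(n^{-(k+\alpha)})$: differentiate the boundary value $k$ times (its Fourier coefficients pick up a factor $(in)^k$), then use that the $k$-th derivative satisfies a H\"older condition of order $\alpha$, for which the standard Fourier decay bound is $|\widehat{g}(n)| = O(|n|^{-\alpha})$ (seen by the usual shift-and-average trick, $\widehat g(n) = \tfrac12\int g(\theta) - g(\theta+\pi/n)\,e^{-in\theta}\,d\theta/2\pi$). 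Combining, $|a_nR^n| \leq C n^{-(k+\alpha)}$, hence $\kappa_*(n) \geq M_1(r_0)\,C^{-1}\,n^{k+\alpha} = c\,n^{k+\alpha}$ with $c = M_1(r_0)/C > 0$, which is the asserted bound.

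The main obstacle is really just invoking the right classical fact cleanly: the reduction to Fourier-coefficient decay is the whole content, and the only subtlety is the endpoint case $\alpha=1$ (Lipschitz $k$-th derivative) and $\alpha=0$ (merely $C^k$), where one must be careful that the $O(n^{-\alpha})$ H\"older-to-Fourier bound still reads correctly — for $\alpha=0$ it degenerates to $O(1)$ and the theorem only claims $\kappa_*(n)\geq c$, consistent with $f\in H^1$ giving unboundedness via Riemann--Lebesgue but no rate. One should therefore state the quantitative part for $0<\alpha\leq 1$, or $k\geq 1$, to have a genuine power; I would phrase the estimate accordingly and cite a standard reference (e.g. Zygmund) for the H\"older--Fourier decay rather than reprove it. Everything else — the uniform-in-$r$ lower bound on $\kappa(n,r)$ and passing to $\kappa_*(n)$ — is immediate from Hardy's monotonicity theorem and Theorem~\ref{thm:Rfin}.
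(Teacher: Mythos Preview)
Your argument is correct and follows essentially the same route as the paper's: both reduce everything to Riemann--Lebesgue (and the H\"older--Fourier decay estimate) for $|a_n|R^n$, bounding $\kappa_*(n)$ below by a constant times $1/(|a_n|R^n)$; the paper uses $M_1(r_*(n)) \sim \|f\|_{H^1(D_R)}$ where you use monotonicity with a fixed $r_0$. One small wrinkle to tighten: the bound $\kappa(n,r)\geq M_1(r_0)/(|a_n|R^n)$ was derived only for $r>r_0$, so ``taking the infimum over $r\leq R$'' does not literally give the claim---you need $r_*(n)>r_0$ for large $n$, which is precisely the $r_*(n)\to R$ fact from Theorem~\ref{thm:rn} that you invoked at the outset; just make that step explicit.
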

\begin{proof} Since $a_n R^n$ are the Fourier coefficients of the $L^1$-function formed by the radial boundary values of $f$, the Riemann--Lebesgue Lemma implies
\[
a_n R^n \to 0 \qquad (n\to\infty);
\]
with a rate $O(n^{-k-\alpha})$ if these boundary values belong to the class $C^{k,\alpha}$ \citeaffixed[§II.4]{MR0236587}{see, e.g.,}. By Theorem~\ref{thm:rn} we have $r_*(n) \to R$. Hence, for $n \to \infty$,
\[
\kappa_*(n) = \kappa(n,r_*(n)) = \frac{M_1(r_*(n))}{|a_n| r_*(n)^n} \sim \frac{\|f\|_{H^1(D_R)}}{|a_n| r_*(n)^n} \geq  \frac{\|f\|_{H^1(D_R)}}{|a_n| R^n}  \to \infty,
\]
since $\|f\|_{H^1(D_R)} > 0$ (otherwise we would have $f=0$ and $R=\infty$).
\end{proof}

\section{Examples of Optimal Radii}\label{sect:examples}

Qualitatively, the general results of Section~\ref{sect:genres} are nicely illustrated by the examples of Figure~\ref{fig:condcomp}. In this section we study a couple of important examples more quantitatively
for large $n$.

\begin{example}\label{ex:exp} This example illustrates the excellent behavior of certain entire functions; a general theory will be developed in §§\ref{sect:upper}--\ref{sect:positive}.
Here, we consider one of the simplest such functions, namely the exponential function
\[
f(z) = e^z,
\]
which is an entire function ($R=\infty$) with the Taylor coefficients $a_n = 1/n!$.
The mean value of the modulus is explicitly given in terms  of the modified Bessel function of the first kind of order zero \cite[§3.71]{MR0010746},
\[
M_1(r) = \frac{1}{2\pi} \int_0^{2\pi} |\exp(r e^{i\theta})|\,d\theta = \frac{1}{2\pi} \int_0^{2\pi} e^{r\cos\theta}\,d\theta = I_0(r) \qquad (r\geq 0).
\]
Hence, the condition number is
\[
\kappa(n,r) = r^{-n}n!\,I_0(r).
\]
Figure~\ref{fig:exp} illustrates the vast cancelations that occur in the Cauchy integral for large condition numbers $\kappa(n,r)$, that is, for far-from-optimal radii $r$.
Using Stirling's formula and the asymptotic expansion of the modified Bessel function \cite[Eq.~(4.12.7)]{MR1688958},
\[
I_0(r) = \frac{e^r}{\sqrt{2\pi r}}\left(1 + \frac{1}{8r} + \frac{9}{128 r^2} + O(r^{-3})\right)\qquad (r\to \infty),
\]
we get an explicit description of the optimal radius and its condition number: namely, as $n\to\infty$,
\begin{subequations}\label{eq:expdata}
\begin{align}
r_*(n) &= n + \frac{1}{2} + \frac{1}{8n} + O(n^{-2}),\\*[1mm]
\kappa_*(n) &= 1 + \frac{1}{12n} + \frac{7}{288 n^2} + O(n^{-3}).
\end{align}
\end{subequations}
In fact, already the first term of this expansion for $r_*(n)$ gives uniformly excellent condition numbers:
\[
1 < \kappa(n,n) < 1.3 \qquad (n \geq 1).
\]
Thus the derivatives of the exponential function can be calculated to full accuracy
using Cauchy integrals, for \emph{all} orders $n$. On the other hand, Figure~\ref{fig:condcomp}.a shows that, by choosing a fixed radius $r$ independently of $n$, it would be impossible to get condition numbers that remain moderately bounded for orders of differentiation between, say, $1$ and $100$. This explains the failure that \citeasnoun[p.~542]{MR654904b} has documented using his implementation for the exponential function.

\begin{figure}[tbp]
\begin{center}
\begin{minipage}{0.325\textwidth}
\begin{center}
\includegraphics[width=\textwidth]{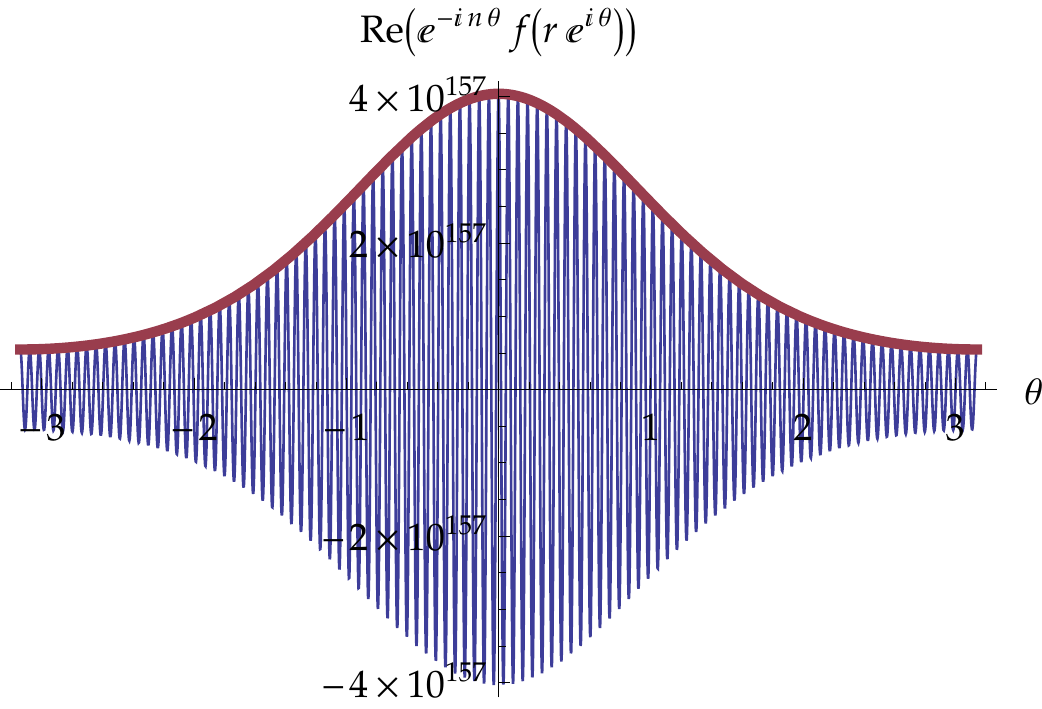}\\*[1.5mm]
{\tiny a.\;\; $r=1$, $\kappa(n,r)=1.182\times 10^{158}$}
\end{center}
\end{minipage}
\hfill
\begin{minipage}{0.325\textwidth}
\begin{center}
\includegraphics[width=\textwidth]{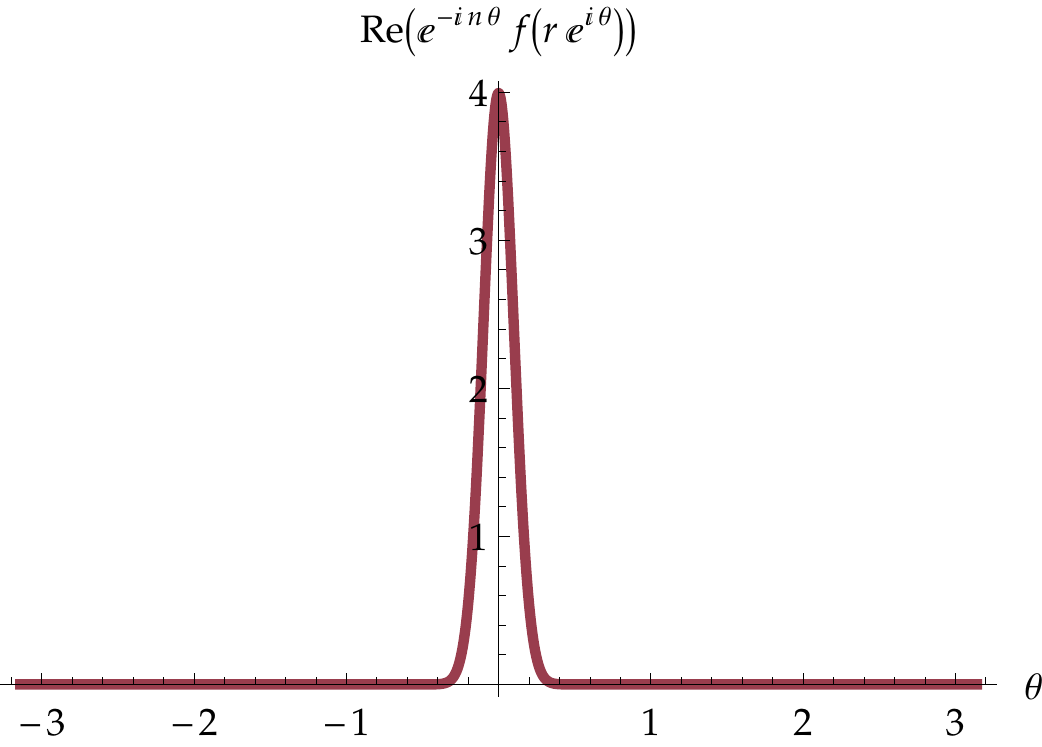}\\*[0mm]
{\tiny b.\;\; $r=100$, $\kappa(n,r)=1.002$}
\end{center}
\end{minipage}
\hfill
\begin{minipage}{0.325\textwidth}
\begin{center}
\includegraphics[width=\textwidth]{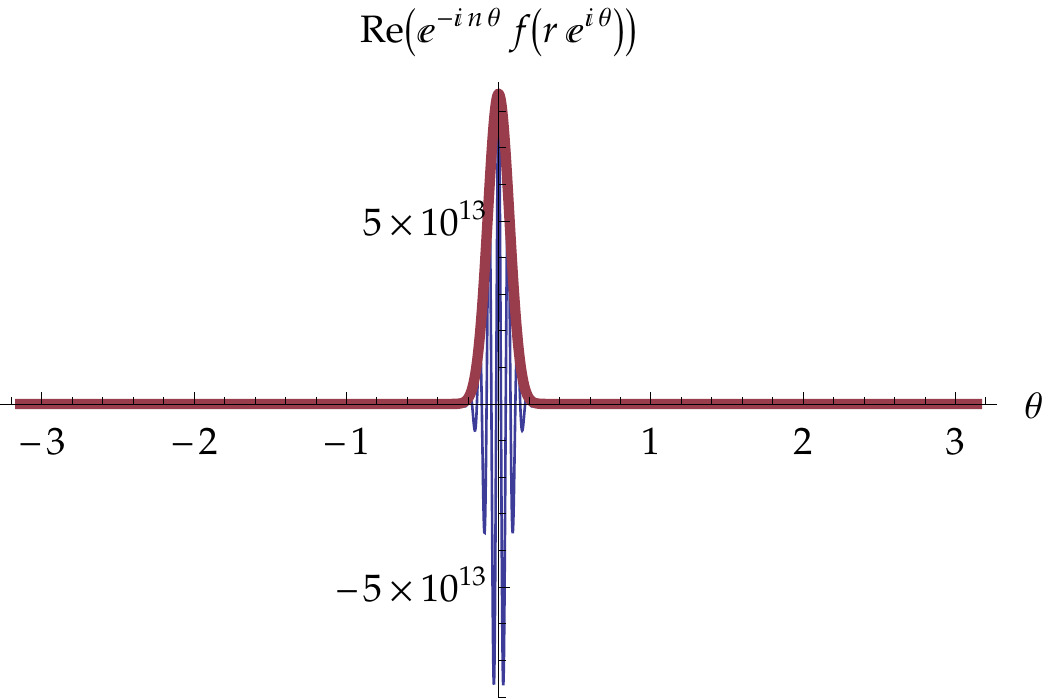}\\*[1.5mm]
{\tiny c.\;\; $r=200$, $\kappa(n,r)=1.502\times 10^{13}$}
\end{center}
\end{minipage}
\end{center}
\caption{Real part (oscillatory, blue line) and absolute modulus (envelope, red line) of the integrand of the Cauchy integral (\ref{eq:an}) for various radii~$r$; $f(z)=e^z$, $n=100$. Clearly visible
is the huge amount of cancelation if the condition number $\kappa(n,r)$ is large. Note that this is \emph{not} an issue of frequency, which is moderate and perfectly dealt with by the
sampling condition (\ref{eq:sampling}), but rather an issue of amplitude.}\label{fig:exp}
\end{figure}

\end{example}

\begin{example}\label{ex:fbeta} In preparation of §\ref{sect:darboux} we consider the family
\[
f_\beta(z) = (1-z)^\beta \qquad (\beta \in \R\setminus \N_0)
\]
of analytic functions, which are not polynomials for the values of $\beta$ considered. The radius of convergence of the Taylor series is $R=1$ and the Taylor coefficients are given by
\[
a_n = \binom{n-\beta-1}{n}\qquad (n=0,1,2,\ldots).
\]
By a simple transformation of Euler's integral representation \cite[Thm.~2.2.1]{MR1688958}, the mean value of the modulus can explicitly be expressed in terms
of the hypergeometric function $_2F_1(a,b;c;z)$:
\begin{multline}\label{eq:M2F1}
M_1(r) = \frac{1}{2\pi}\int_0^{2\pi} |1-r e^{i\theta}|^\beta \,d\theta = \frac{1}{2\pi}\int_0^{2\pi} \left(\sqrt{1+r^2-2r\cos\theta}\right)^\beta\,d\theta\\*[2mm]
= (1+r)^\beta\/\; _2F_1\left(\frac{1}{2},-\frac\beta 2; \,1;\, \frac{4r}{(1+r)^2}\right)\qquad (0\leq r < 1).
\end{multline}
The classical results of Gauss \cite[Thms.~2.1.3/2.2.2]{MR1688958} about the hypergeometric function $_2F_1(a,b;c;z)$ as $z\to 1$ imply, as $r \to 1$ from below,
\begin{equation}\label{eq:gauss}
M_1(r) \sim
\begin{cases}
\dfrac{2^\beta\,\Gamma\left(\frac{\beta+1}{2}\right)}{\sqrt{\pi}\,\Gamma\left(\frac{\beta}{2}+1\right)}&\qquad (\beta>-1);\\*[7mm]
\dfrac{1}{\pi}\log\left(\dfrac{1}{1-r}\right)&\qquad (\beta=-1);\\*[5mm]
\dfrac{\Gamma\left(-\frac{\beta+1}{2}\right)}{2\sqrt{\pi}\,\Gamma\left(-\frac{\beta}{2}\right)}\,(1-r)^{\beta+1}&\qquad (\beta<-1).
\end{cases}
\end{equation}
Therefore, we have to distinguish three cases.

\subsubsection*{Case I: $\beta>-1$} Here, (\ref{eq:gauss}) implies that $f_\beta$ belongs to the Hardy space $H^1(D_1)$ with norm
\[
\|f_\beta\|_{H^1(D_1)} = \lim_{r\to1} M_1(r) = \dfrac{2^\beta\,\Gamma\left(\frac{\beta+1}{2}\right)}{\sqrt{\pi}\,\Gamma\left(\frac{\beta}{2}+1\right)} \geq 1.
\]
(The estimate from below follows from the fact that $\|f_\beta\|_{H^1(D_1)}$ is a convex and coercive function of $\beta$, taking its minimum at $\beta=0$.) The constant $\nu$, defined in (\ref{eq:nu}), can be computed from
\begin{multline*}
M_1'(r) = \beta (1+r)^{\beta-3} \left((1+r)^2\/\; _2F_1\left(\frac{1}{2},-\frac\beta 2; \,1;\, \frac{4r}{(1+r)^2}\right) \right.\\*[2mm]
\left. +(r-1)\/\; _2F_1\left(\frac{3}{2},1-\frac\beta 2; \,2;\, \frac{4r}{(1+r)^2}\right)\right)
\end{multline*}
to have the value
\[
\nu = \lim_{r\to 1} r \sigma'(r) = \frac{M_1'(1)}{M_1(1)} = \frac{\beta}{2}.
\]
Thus, by Theorems~\ref{thm:Rfin} and \ref{thm:rn}, the condition number $\kappa(n,r)$ is strictly decreasing for $n>\beta/2$ (see Figure~\ref{fig:condcomp}.f for an example); hence
\begin{subequations}
\begin{equation}
r_*(n) = 1 \qquad (n > \beta/2),
\end{equation}
which induces (by Stirling's formula)
\begin{equation}
\kappa_*(n) = \frac{\|f_\beta\|_{H^1(D_1)}}{\left|\binom{n-\beta-1}{n}\right|}
 \geq \frac{1}{\left|\binom{n-\beta-1}{n}\right|} \sim |\Gamma(-\beta)| n^{\beta+1} \to \infty \qquad (n\to\infty).
\end{equation}
\end{subequations}
This means that for each radius $r$ there will be a complete loss of digits for $n$ large enough (e.g., there is already a more than 12 digits loss for $\beta=11/2$ and $n=100$, see Figure~\ref{fig:condcomp}.f); an effect that will be the more pronounced the larger $\beta$ is. Note that a larger $\beta$ corresponds to
higher order real differentiability at the branch point $z=0$;
an observation which is in accordance with Theorem~\ref{thm:hardy} and  which helps to explain the failure that \citeasnoun[p.~542]{MR654904b} has documented using his implementation for such functions.

\subsubsection*{Case II: $\beta=-1$} Now, (\ref{eq:gauss}) shows that $f_{\beta}$ does not belong to the Hardy space $H^1(D_1)$ anymore. Thus, by Theorems~\ref{thm:Rfin} and \ref{thm:rn},
we have $0< r_*(n) < 1$ with $r_*(n) \to 1$ as $n \to \infty$. Because of $a_n=1$, and by (\ref{eq:gauss}) once more, there is the asymptotic expansion
\[
\kappa(n,r) = \frac{M_1(r)}{r^n} \sim \frac{1 }{\pi r^n} \log\left(\dfrac{1}{1-r}\right) \qquad (r\to 1).
\]
It is now a more or less straightforward exercise in asymptotic analysis \cite[Chap.~2]{MR671583} to get from here to the following expansions of the optimal radius and condition
number: as $n\to \infty$,
\begin{subequations}
\begin{align}
r_*(n) & = 1 - \frac{1}{n \log n} + O\left(\frac{\log\log n}{n(\log n)^2}\right),\\*[1mm]
\kappa_*(n) &= \frac{\log n}{\pi}  + O(\log\log n).
\end{align}
\end{subequations}
This logarithmic growth is very moderate; indeed, one has
\[
1 < \kappa\left(n,1-\frac{1}{n\log n}\right) < 4.8 \qquad (3 \leq n \leq 10\,000),
\]
which means that less than one digit is lost for a significant range of $n$.

\begin{remark}\label{rem:subopt} In practice it is not always advisable to use the optimal radius: a small sacrifice in accuracy might considerably speed up the
approximation of the Cauchy integral by the trapezoidal sum. In fact, if we recall (\ref{eq:mbound1}), we realize that the near-optimal choice $r_n = 1-(n \log n)^{-1}$ would
need about\footnote{Note that, by (\ref{eq:mbound1}) and (\ref{eq:mbound2}), estimates of the form $m_\epsilon \approx \cdots$ include, among other approximations, a factor of
the form $1+o(1)$ as $\epsilon \to 0$. Therefore, one should not expect too much precision of such estimates, in particular not if additionally finite precision effects come into play for
$\epsilon$ close to machine precision. Even then, however, in all the examples of this paper, we observe ratios of the actual values of $m_\epsilon$ to their estimates
that are smaller than 1.3; thus, these rough estimates are, in practice, quite useful devices to predict the actual computational effort.}
\begin{equation}\label{eq:mboundrn}
m_\epsilon \approx n \log n \cdot \log \epsilon^{-1}
\end{equation}
nodes to achieve an approximation of relative error $\epsilon$. We can actually get rid of the factor $\log n$ here if we use the sub-optimal radius $\tilde r_n = 1- \alpha n^{-1}$ ($\alpha>0$) instead.
Asymptotically, as $n \to \infty$, the condition number is then
\begin{equation}\label{eq:f1subopt}
\kappa(n,\tilde r_n) \sim \frac{1}{\pi \tilde r_n^n} \log\left(\dfrac{1}{1-\tilde r_n}\right) = \frac{1}{\pi(1-\alpha n^{-1})^n} \log(n/4)\sim \frac{e^\alpha}{\pi}\log n,
\end{equation}
and therefore still of logarithmic growth: compared to $r_n$ we additionally sacrifice just about $\log_{10} e^\alpha \doteq 0.43\,\alpha $ digits, independently of $n$. However, the corresponding number of nodes now grows like
\begin{equation}\label{eq:mboundrtilde}
m_\epsilon \approx \frac{n}{\alpha} \log \epsilon^{-1},
\end{equation}
which is about an $\alpha \log n$ improvement in speed.

To be specific, let us run some numbers for $n=100$: Since $\kappa(100,r_{100}) \doteq 3.25$, we are about to lose $0.51$ digits using $r_n$; in hardware arithmetic we could therefore strive for a relative error of $\epsilon=2\times 10^{-15}$. By~(\ref{eq:mboundrn}) we have to take about $m_\epsilon \approx 16\,000$ nodes; actually, a computation with $m=20\,000$ gives
us the relative error $2.6 \times 10^{-15}$. In contrast, for $\alpha=4$, we have $\kappa(100,\tilde r_{100}) \doteq 101.63$, so we are about to lose $2.0$ digits using $\tilde r_n$; we could therefore strive
for a relative error of $\epsilon=5\times 10^{-14}$ here. Because of (\ref{eq:mboundrtilde}) we now have to take just about $m_\epsilon\approx 800$ nodes; and indeed, a computation with $m=800$ gives
us the relative error $4.9 \times 10^{-14}$. Thus, sacrificing just a little more than one digit cuts the number of nodes by a factor of 25 (the prediction was
$4\log 100 \doteq 18.4$).

\end{remark}

\subsubsection*{Case III: $\beta<-1$} As for $\beta=-1$, (\ref{eq:gauss}) shows that these $f_{\beta}$ do not belong to the Hardy space $H^1(D_1)$. Thus, by Theorems~\ref{thm:Rfin} and \ref{thm:rn},
we have $0< r_*(n) < 1$ with $r_*(n) \to 1$ as $n \to \infty$; hence, (\ref{eq:gauss}) implies the asymptotic expansions
\begin{subequations}
\begin{equation}
r_*(n)  = 1 + \frac{\beta+1}{n} + O(n^{-2}) \qquad(n\to\infty)
\end{equation}
of the optimal radius and
\begin{align}
\kappa_*(n) &\sim \frac{1}{2\sqrt{\pi} \left|\binom{n-\beta-1}{n}\right|} \frac{\Gamma\left(-\frac{\beta+1}{2}\right)}{\Gamma\left(-\frac{\beta}{2}\right)} \frac{\left(-\frac{\beta+1}{n}\right)^{\beta+1}}{\left(1+\frac{\beta+1}{n}\right)^{n}}\notag\\*[2mm]
&\sim \frac{(2e)^{-\beta-1}(-\beta-1)^\beta}{\pi} \,  \Gamma\left(\frac{1-\beta}{2}\right)^2 = c_\beta \label{eq:cbeta}
\end{align}
of the optimal condition number.
\end{subequations}
Note that there is no explosion in $n$ and that $c_\beta \to 1$ monotonically from above as $\beta\to-\infty$. Quantitatively we have
\[
1 \leq c_\beta \leq 2 \qquad (\beta \leq -1.362),
\]
that is, we are just about to lose one binary digit of accuracy within this range of values of $\beta$ (for large $n$). Finally,
to accomplish an approximation of relative error $\epsilon$ by using a trapezoidal sum, we would need, in view of
(\ref{eq:mbound1}), about the following number of nodes:
\begin{equation}\label{eq:mboundrn2}
m_\epsilon \approx \frac{n}{-\beta-1} \cdot \log \epsilon^{-1}.
\end{equation}

Here are some actual numbers: for $\beta=-6$, $n=100$, $r_n = 1 +(\beta+1) n^{-1}$, and the accuracy requirement $\epsilon=10^{-15}$, we get
\[
\kappa(n,r_n) \doteq 1.0769,\qquad m_\epsilon \approx 700.
\]
In fact, a computation in hardware arithmetic secures a relative error of $4\times 10^{-15}$ using $m=900$ nodes.
\end{example}

\begin{example} We analyze a further example that \citeasnoun[p.~542]{MR654904b} has documented to fail his implementation:
\[
f(z) = (1+z)^{10} \log(1+z)
\]
with radius of convergence $R=1$. Having norm $\|f\|_{H^1(D_1)} \doteq 180.14$, this function belongs to the Hardy space $H^1(D_1)$. Theorem~\ref{thm:hardy} gives $\kappa_*(n)\to\infty$ as $n\to \infty$. More
quantitatively we get, by Theorem~\ref{thm:Rfin},
\[
\kappa(n,r) \geq \kappa(n,1) \doteq \frac{180.14}{|a_n|}\qquad (n>\nu \doteq 5.727).
\]
The asymptotics (the first equality is valid for $n\geq 11$)
\[
a_n = \frac{(-1)^{n-1}}{11\binom{n}{11}}\sim \frac{(-1)^{n-1} 10!}{n^{11}}\qquad (n\to \infty)
\]
implies
\[
\kappa(n,r) \geq \kappa(n,1) \doteq 1981.57 \binom{n}{11} \sim 5.46 \times 10^{-4} \cdot n^{11}\qquad (n\to \infty).
\]
For instance, $n=50$ gives $\kappa(50,r) \geq \kappa(50,1) \doteq 7.4\times 10^{13}$; meaning that a loss of more than about 14 digits is unavoidable here.
\end{example}

\begin{example} The final example of this section is also taken from the list of failures documented by \citeasnoun[p.~542]{MR654904b}:
\[
f(z) = 10^6 + \frac{1}{1-z}
\]
with radius of convergence $R=1$. This function is a perturbation of the function $f_{-1}$ from Example~\ref{ex:fbeta}. Denoting by $M_1(f_{-1};r)$ the mean value of the modulus of $f_{-1}$ we get,
using (\ref{eq:gauss}),
\[
M_1(r) \leq 10^6 + M_1(f_{-1};r) \sim 10^6 + \frac{\log\left(\frac{1}{1-r}\right)}{\pi} \qquad (r\to 1).
\]
The sub-optimal choice $r_n = 1-n^{-1}$ (see Remark~\ref{rem:subopt}) yields
\[
\kappa(n,r_n) \leq 10^6 e+ \frac{e}{\pi} \log n \approx 3\times 10^6 \qquad (1\leq n \leq 10^{100\,000}).
\]
Hence, we expect a loss of (at most) about $6.5$ digits throughout this huge range of $n$. The estimate is, in fact, quite sharp: for instance, $n=100$ yields
\[
\kappa(100,r_{100}) \doteq 2.7\times 10^6.
\]
An actual calculation using a trapezoidal sum with $m=4096$ nodes yields a relative error of $3.13\times 10^{-10}$ which corresponds to a loss of a little more than 6 digits in hardware arithmetic.
\end{example}

\section{Functions Amenable to Darboux's Theorem}\label{sect:darboux}

Example \ref{ex:fbeta} contains, in fact, all the information that is needed to address a large class of analytic functions:
\[
f(z) = (1-z)^\beta v(z)\qquad (\beta \in \R\setminus\N_0),
\]
where $v(z)$ is analytic in a neighborhood of $\overline{D_1}$, $v(1)\neq 0$. In particular, the radius of convergence is $R=1$. By Darboux's theorem \cite[Thm.~5.3.1]{MR2172781}, the
Taylor coefficients are asymptotically given by
\begin{equation}\label{eq:darboux}
a_n = v(1) \frac{n^{-\beta-1}}{\Gamma(-\beta)} \,(1+O(n^{-1}))\qquad (n\to\infty).
\end{equation}
Hence, the condition number is asymptotically described by
\begin{equation}\label{eq:darbouxcond}
\kappa(n,r) \sim \frac{M_1(r)}{|v(1)|r^n} |\Gamma(-\beta)| n^{\beta+1}\qquad (n\to\infty).
\end{equation}
The mean value of the modulus satisfies, as $r\to 1$, (compare with (\ref{eq:gauss}))
\[
M_1(r) = \frac{1}{2\pi} \int_0^{2\pi} |1-r e^{i\theta}|^\beta \cdot |v(r e^{i\theta})|\,d\theta \sim
\begin{cases}
c & \qquad (\beta>-1);\\*[2.5mm]
c\log\left(\dfrac{1}{1-r}\right)& \qquad (\beta=-1);\\*[4mm]
c (1-r)^{\beta+1} & \qquad (\beta<-1).
\end{cases}
\]
Here, $c$ denotes some positive constant that depends on $v$ and $\beta$. This implies, such as in Example~\ref{ex:fbeta}, that, as $n\to\infty$,
\begin{equation}\label{eq:darbouxradius}
r_*(n)
\begin{cases}
=  1 & \quad (\beta>-1);\\[2.5mm]
\sim  1- \dfrac{1}{n\log n} & \quad (\beta=-1); \\*[4mm]
\sim  1 + \dfrac{\beta+1}{n}  & \quad (\beta<-1);
\end{cases}\qquad
\kappa_*(n) \sim
\begin{cases}
c\, n^{\beta+1} & \quad (\beta>-1);\\[1mm]
c\,  \log n & \quad (\beta=-1); \\*[1mm]
c & \quad (\beta<-1).
\end{cases}
\end{equation}
For large orders of differentiation, this means that, once more in accordance with Theorem~\ref{thm:hardy}, the Hardy space case $\beta>-1$ yields polynomial growth of the condition numbers; whereas for $\beta=-1$
we get just logarithmic growth and for $\beta<-1$ there is a uniform bound of the condition number.

To address the last two cases more quantitatively, we can estimate the mean modulus by
\[
M_1(r) \leq \frac{\|v\|_{H^\infty(D_1)}}{2\pi} \int_0^{2\pi} |1-r e^{i\theta}|^\beta \,d\theta\qquad (0<r<1)
\]
with the help of yet another Hardy space norm, defined by
\[
\|f\|_{H^\infty(D_r)} = \esssup_{0\leq \theta\leq 2\pi} |f(r e^{i\theta})|.
\]
Denoting the condition number of the Cauchy integral for the function $f_\beta$ by $\kappa(f_\beta;n,r)$ (recall that this expression can be evaluated in terms of the hypergeometric function, see (\ref{eq:M2F1})), we thus obtain a useful estimate of the condition number itself, namely
\begin{equation}\label{eq:darbouxest}
\kappa(n,r) \leq \frac{\|v\|_{H^\infty(D_1)}}{|v(1)|}\, \kappa(f_\beta;n,r)\qquad (0<r<1).
\end{equation}
Note that there is nothing special about $R=1$ here. For functions of the form
\[
f(z) = (z_0 - z)^\beta v(z)\qquad (\beta\in\R\setminus \N_0)
\]
with $|z_0|=R$, $v(z)$ analytic in a neighborhood of $\overline{D_R}$, and $v(z_0)\neq 0$ we get accordingly
\begin{equation}\label{eq:darbouxest2}
\kappa(n,r) \leq \frac{\|v\|_{H^\infty(D_R)}}{|v(z_0)|}\, \kappa(f_\beta;n,r/R)\qquad (0<r<R).
\end{equation}
If there is more than one singularity on the circle $C_R$, we would have to use symmetry arguments or we would have to consider superpositions of these estimates.

\begin{example}\label{ex:sec} We study the example of Figure~\ref{fig:condcomp}.d, that is,
\[
f(z) = \sec(z)^6
\]
which has radius of convergence $R=\pi/2$.
To begin with, we extract the poles at $z=\pm \pi/2$ by the factorization
\[
f(z) = g(z)^6 \cdot v(z),\qquad v(\pm\pi/2) = 1,
\]
with the rational function
\[
g(z) = \frac{\res_{\pi/2} \sec}{z-\pi/2} + \frac{\res_{-\pi/2}\sec}{z+\pi/2} = \frac{4\pi^2}{\pi^2-4z^2}.
\]
One easily checks that $\|v\|_{H^\infty(D_1)}=1$, so that, by (\ref{eq:darbouxest2}) and by a symmetry argument,
\[
1\leq \kappa(n,r) \leq \kappa(f_{-6};n,r/R)\qquad (R=\pi/2).
\]
In view of (\ref{eq:darbouxradius}) we choose the radius
\[
r_n = \frac{\pi}{2}\left(1-\frac{5}{n}\right)
\]
and  obtain (see (\ref{eq:cbeta}) for a definition of $c_\beta$)
\[
1 \leq \kappa(n,r_n) \leq \kappa(f_{-6};n,1-5n^{-1}) \sim c_{-6} = \frac{9e^5}{1250} \doteq 1.0686\qquad (n\to \infty).
\]
We should thus be able to get about full accuracy for large orders of differentiation. In fact, for $n=100$, we have
\[
\kappa(n,r_n) \doteq 1.0767 \leq 1.0769 \doteq \kappa(f_{-6};n,r_n).
\]
Striving for a relative error of $\epsilon = 10^{-15}$ requires, see (\ref{eq:mboundrn2}), a trapezoidal sum with a number of nodes of about
\[
m_\epsilon \approx \frac{n}{5}\log\epsilon^{-1} \approx 700.
\]
In fact, an actual computation with $m=880$ yields a little more than 14 correct digits in hardware arithmetic.
\end{example}

\begin{example}\label{ex:bern} In this example we address the accurate computation of the Bernoulli numbers $B_k$ given by their exponentially generating function (see Figure~\ref{fig:condcomp}.e)
\[
f(z) = \frac{z}{e^z-1} = \sum_{k=0}^\infty \frac{B_k}{k!} z^k,
\]
which has radius of convergence $R=2\pi$.  We extract the poles at $z=\pm 2\pi i$ by the factorization
\[
f(z) = g(z) \cdot v(z),\qquad v(\pm2\pi i) = 1,
\]
with the rational function
\[
g(z) = \frac{\res_{2\pi i} f}{z-2\pi i} + \frac{\res_{-2\pi i}f}{z+2\pi i} = -\frac{8\pi^2}{4\pi^2+z^2}.
\]
One easily checks that $\|v\|_{H^\infty(D_1)}=2\pi/(1-e^{-2\pi}) \doteq 6.2949$, so that, by (\ref{eq:darbouxest2}) and by a symmetry argument,
\[
1\leq \kappa(n,r) \leq 6.3\, \kappa(f_{-1};n,r/R)\qquad (R=2\pi).
\]
Because of (\ref{eq:f1subopt}) we expect just a moderate loss of accuracy using the choice $r_n = 2\pi(1-n^{-1})$. In fact, for $n=100$ we
get $\kappa(100,r_{100}) \doteq 7.2355$, meaning a loss of less than one digit. In view of (\ref{eq:mboundrtilde}) we expect to accomplish an approximation error $\epsilon= 10^{-15}$
using a trapezoidal sum with a number of nodes of about
\[
m_\epsilon \approx n \log\epsilon^{-1} \approx 3500.
\]
In fact, an actual calculation with $m=4096$ gives more than 15 correct digits in hardware arithmetic.
\end{example}

\section{The Quasi-Optimal Radius}\label{sect:upper}

For entire transcendental functions, it turns out that an upper bound of the condition number is actually easier to analyze, namely
\[
\kappa(n,r) = \frac{M_1(r)}{|a_n|r^n} \leq \frac{M(r)}{|a_n|r^n} = \bar\kappa(n,r),
\]
where
\[
M(r) = \max_{0\leq \theta\leq 2\pi} | f(re^{i\theta})|
\]
denotes the maximum modulus function of $f$. In fact, we will see in §§\ref{sect:saddle}--\ref{sect:positive} that the radius that is optimal for this upper bound is in many
cases already close to optimal for the condition number itself.

For the maximum modulus, the analogue of Hardy's theorem is a classical theorem of complex analysis (the three circles theorem); for the standard proof see \cite[Vol.~II, p.~221]{MR0444912} or
\citeasnoun[III.304/305]{MR0170986}:\nocite{Hadamard1896,Blumenthal1907,Faber1907}
\begin{theorem}[Hadamard 1896, Blumenthal 1907, Faber 1907]\label{thm:threecircle}
Let $f$ be given by a Taylor series with radius of convergence $R$. The maximum modulus function $M$ satisfies, for $0<r<R$:
\begin{itemize}
\item[(a)] $M(r)$ is continuously differentiable, except for a set of isolated $r$;\\*[-3mm]
\item[(b)] if $f(z)$ is not a monomial, $\log M(r)$ is a strictly convex function of $\log r$;\\*[-3mm]
\item[(c)] if $f \not\equiv \const$, $M(r)$ is strictly increasing.
\end{itemize}
\end{theorem}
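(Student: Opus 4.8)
The plan is to reduce the three circles theorem to Hadamard's original three-lines theorem by the standard logarithmic substitution, handling each of the three assertions (a)--(c) in turn. For the core convexity statement (b), I would first write $m(s) = \log M(e^s)$ for $s$ in the interval $(-\infty, \log R)$ and aim to show $m$ is convex, in fact strictly convex unless $f$ is a monomial. The key analytic input is the maximum modulus principle applied to the auxiliary function $g_\lambda(z) = z^\lambda f(z)$ on an annulus $r_1 \le |z| \le r_2$: for a suitable choice of the real exponent $\lambda$ (namely the one making $r_1^\lambda M(r_1) = r_2^\lambda M(r_2)$), the maximum of $|g_\lambda|$ over the closed annulus is attained on one of the two boundary circles, and equals the common value; this forces $M(r) \le r_1^{\lambda} M(r_1) \cdot r^{-\lambda}$ for all intermediate $r$, which is exactly the midpoint-type inequality $m\big(\tfrac{s_1+s_2}{2}\big) \le \tfrac12(m(s_1)+m(s_2))$ after optimizing, and more generally gives convexity on $[s_1,s_2]$. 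Since $z^\lambda$ is multivalued for non-integer $\lambda$, I would either work with $|z|^\lambda |f(z)|$ directly (which is the modulus of a genuine harmonic-majorized quantity) or pass to $\log|f(z)| + \lambda \log|z|$ and invoke that $\log|f|$ is subharmonic; the cleanest route is: $\log|f(re^{i\theta})| + \lambda \log r$ is subharmonic in the annulus, hence its maximum over the annulus is on the boundary, which gives the bound on $M$.

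For strictness in (b), the point is that equality in the three circles inequality at some interior $r$ would force $\log|f| + \lambda\log r$ to attain an interior maximum, hence (being subharmonic and attaining its max in the interior) to be constant, i.e. $|f(z)| = c|z|^{-\lambda}$ on the annulus; analyticity of $f$ then forces $-\lambda$ to be a nonnegative integer and $f(z) = c z^{-\lambda}$ to be a monomial. I would spell this out via the identity principle: $|f|$ constant-times-$|z|^{k}$ on an open set pins down $f$ up to a unimodular constant. For (c), strict monotonicity of $M(r)$: this is immediate from $\log|f|$ subharmonic and non-constant — $M(r) = \max_{C_r}|f|$ is non-decreasing by the maximum principle on disks, and if $M(r_1)=M(r_2)$ for $r_1<r_2$ then $|f|$ attains its disk-maximum over $\overline{D_{r_2}}$ at an interior point, forcing $f$ constant, contradiction. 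Alternatively (c) follows from (b) together with the fact that a convex function that is eventually non-increasing must be constant on a ray, again forcing a monomial with nonpositive exponent, i.e. a constant.

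For (a), differentiability except at isolated points: here $M(r) = \max_\theta |f(re^{i\theta})|$ is a maximum of a real-analytic family, so it is locally Lipschitz, and at any $r$ where the maximizing angle $\theta(r)$ is unique, the envelope theorem gives $M'(r) = \partial_r |f(re^{i\theta(r)})|$, which is real-analytic there; the set where the maximizer is non-unique is where two real-analytic branches cross, hence discrete (isolated) unless $f$ is a monomial — I would note that for a monomial $M(r)=|a_d|r^d$ is smooth everywhere, so the exceptional set is genuinely finite/isolated in all cases. The main obstacle, and the part requiring the most care, is the multivaluedness issue with the exponent $\lambda$ in the auxiliary function: I expect the cleanest rigorous treatment is to phrase everything in terms of subharmonicity of $\log|f(z)| + \lambda\log|z|$ rather than the maximum modulus of $z^\lambda f(z)$, thereby sidestepping branch cuts entirely, and then to extract strictness from the strong maximum principle for subharmonic functions. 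Everything else is the routine logarithmic change of variables and the identity theorem. Since the excerpt only asks for the statement and cites standard references for the proof, I would keep this brief and refer to \cite{MR0444912} or \citeasnoun[III.304/305]{MR0170986} for the details, presenting only the subharmonicity reduction as the conceptual core.
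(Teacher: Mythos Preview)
The paper does not actually prove this theorem; it simply cites the standard references (Markushevich and P\'olya--Szeg\H{o}) for the proof. Your proposal correctly sketches exactly that standard argument --- the subharmonicity of $\log|f(z)|+\lambda\log|z|$ for the three circles inequality, the strong maximum principle for strictness, and the envelope/branch-crossing picture for the isolated failures of differentiability --- and you yourself recognize at the end that the paper only states the result. So your proposal is correct and matches what the cited references do; there is nothing further to compare.
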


With the same proofs as in §\ref{sect:genres} for the condition number, we deduce from this theorem the following results (restricting ourselves to entire transcendental functions, though).

\begin{theorem}\label{thm:barcond} Let $f$ be an entire transcendental function with Taylor coefficients $a_n$, and let $a_{n_0}$ be its first non-zero coefficient. Then, for $n>n_0$, with $a_n \neq 0$ and $r>0$:
\begin{itemize}
\item[(a)] $\bar\kappa(n,r)$ is continuously differentiable, except for a set of isolated $r$;\\*[-3mm]
\item[(b)] $\log \bar\kappa(n,r)$ is a strictly convex function of $\log r$;\\*[-3mm]
\item[(c)] $\bar\kappa(n,r)\to\infty$, as $r\to 0$ and $r\to\infty$.
\end{itemize}
\end{theorem}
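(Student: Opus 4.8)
The plan is to transcribe the arguments of Corollary~\ref{cor:Hardy} and Theorems~\ref{thm:R0}--\ref{thm:entire}, replacing Hardy's theorem on $M_1$ by Hadamard's three circles theorem (Theorem~\ref{thm:threecircle}) on $M$. Everything hinges on the elementary identity
\[
\log \bar\kappa(n,r) = \log M(r) - \log|a_n| - n\log r \qquad (r>0,\ a_n\neq 0),
\]
which exhibits $\log\bar\kappa(\cdot\,,r)$ as $\log M(r)$ plus a term that is affine in $\log r$ and $C^\infty$ in $r$ on $(0,\infty)$. Note that an entire \emph{transcendental} $f$ is in particular not a monomial, so all three clauses of Theorem~\ref{thm:threecircle} are available.

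For (a): by Theorem~\ref{thm:threecircle}(a), $M(r)$ is continuously differentiable for $r>0$ apart from a set of isolated points; multiplying by the $C^\infty$ factor $(|a_n|r^n)^{-1}$ leaves this unchanged, so $\bar\kappa(n,r)$ is $C^1$ off that isolated set. For (b): Theorem~\ref{thm:threecircle}(b) gives that $\log M(r)$ is a \emph{strictly} convex function of $\log r$; adding the $\log r$-affine term $-\log|a_n|-n\log r$ preserves strict convexity, which is exactly the claim.

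Part (c) splits into two halves. As $r\to 0$, from $f(z)=a_{n_0}z^{n_0}(1+o(1))$ uniformly on $C_r$ we obtain $M(r)=|a_{n_0}|r^{n_0}+O(r^{n_0+1})$, hence $\bar\kappa(n,r)\sim (|a_{n_0}|/|a_n|)\,r^{n_0-n}\to\infty$ because $n>n_0$ — this is verbatim the computation in the proof of Theorem~\ref{thm:R0}. As $r\to\infty$, convexity alone is \emph{not} enough, since a convex function of $\log r$ may decrease to a finite asymptote; this is the one genuinely delicate point and the main obstacle. I would resolve it as in the proof of Theorem~\ref{thm:entire}: if $\liminf_{r\to\infty}M(r)/r^n<\infty$, then Cauchy's estimate $|a_k|\le M(r)\,r^{-k}$ forces $a_k=0$ for every $k>n$, making $f$ a polynomial and contradicting transcendence; therefore $M(r)/r^n\to\infty$, i.e.\ $\bar\kappa(n,r)\to\infty$. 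Apart from this appeal to the polynomial-growth dichotomy for entire functions, the whole proof is a routine transfer of the $M_1$-arguments of \S\ref{sect:genres}.
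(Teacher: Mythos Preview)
Your proposal is correct and follows essentially the same approach as the paper, which simply states that the result is obtained ``with the same proofs as in \S\ref{sect:genres}'' by replacing Hardy's theorem on $M_1$ with Hadamard's three circles theorem on $M$. Your write-up makes explicit precisely the transfers of Corollary~\ref{cor:Hardy} and Theorems~\ref{thm:R0}/\ref{thm:entire} that the paper only gestures at.
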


The same reasoning as in §\ref{sect:optrad} shows the existence of the optimal upper bound
\begin{equation}
\bar\kappa_\diamond(n) = \min_{r>0} \bar\kappa(n,r),
\end{equation}
which is now taken for the radius
\begin{equation}
r_\diamond(n) = \argmin_{r>0} \bar\kappa(n,r).
\end{equation}
Note that $r_\diamond(n)$ is unique because of the \emph{strict} convexity stated in Theorem~\ref{thm:barcond}. As for $r_*(n)$ it is convenient to extend the definition of $r_\diamond(n)$ to the case of $a_n = 0$ by setting
\begin{equation}\label{eq:rdiamond}
r_\diamond(n) = \argmin_{r>0} r^{-n} M(r).
\end{equation}
We call $r_\diamond(n)$ the {\em quasi-optimal} radius and define, accordingly, the quasi-optimal condition number by
\begin{equation}
\kappa_\diamond(n) = \kappa(n,r_\diamond(n)) \geq \kappa_*(n).
\end{equation}
Finally, by repeating the proof of Theorem~\ref{thm:rn} we get:

\begin{theorem}\label{thm:rdiamondgrowth} Let $f$ be an entire transcendental function. Then, the sequence $r_\diamond(n)$ satisfies the monotonicity
\[
r_\diamond(n) \leq r_\diamond(n+1)\qquad (n>n_0)
\]
and has the limit $\lim_{n\to\infty} r_\diamond(n) = \infty$.
\end{theorem}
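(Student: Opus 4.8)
The plan is to transcribe, almost verbatim, the proof of Theorem~\ref{thm:rn}, replacing the mean modulus $M_1$ throughout by the maximum modulus $M$ and taking $R=\infty$. First I would collect the ingredients that make this work: $M(r)>0$ for every $r>0$ (the zeros of the entire transcendental $f$ are isolated), $M$ is continuous on $(0,\infty)$, $M$ is strictly increasing (Theorem~\ref{thm:threecircle}(c)), and --- by the reasoning of \S\ref{sect:optrad} applied to $\bar\kappa$ via Theorem~\ref{thm:barcond} --- the minimizer $r_\diamond(n)$ exists and is unique for each $n>n_0$. I would work with $\phi_n(r):=r^{-n}M(r)$, so that $\phi_{n+1}(r)=r^{-1}\phi_n(r)$ and $r_\diamond(n)=\argmin_{r>0}\phi_n(r)$; this notation keeps the argument insensitive to whether $a_n$ vanishes (cf.\ (\ref{eq:rdiamond})).

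For the monotonicity, I would fix $n>n_0$ and any $0<r<r_\diamond(n)$, multiply the optimality inequality $\phi_n(r_\diamond(n))\leq\phi_n(r)$ by the positive number $r_\diamond(n)^{-1}$, and combine it with $r_\diamond(n)^{-1}<r^{-1}$ and $\phi_n(r)>0$ to obtain
\[
\phi_{n+1}\bigl(r_\diamond(n)\bigr)=r_\diamond(n)^{-1}\phi_n\bigl(r_\diamond(n)\bigr)\leq r_\diamond(n)^{-1}\phi_n(r)<r^{-1}\phi_n(r)=\phi_{n+1}(r).
\]
Thus $\phi_{n+1}$ exceeds $\phi_{n+1}(r_\diamond(n))$ at every point of $(0,r_\diamond(n))$, so its unique minimizer $r_\diamond(n+1)$ must satisfy $r_\diamond(n+1)\geq r_\diamond(n)$.

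For the limit, monotonicity gives $r_0:=\lim_{n\to\infty}r_\diamond(n)\in(0,\infty]$, and I would argue by contradiction that $r_0<\infty$ is impossible. Fixing $r>r_0$ and raising $\phi_n(r_\diamond(n))\leq\phi_n(r)$ to the power $1/n$ gives
\[
r_\diamond(n)^{-1}\,M\bigl(r_\diamond(n)\bigr)^{1/n}\leq r^{-1}\,M(r)^{1/n},
\]
and then I would let $n\to\infty$: the right-hand side tends to $r^{-1}$, and on the left $r_\diamond(n)^{-1}\to r_0^{-1}$ while $M(r_\diamond(n))^{1/n}\to1$ because $0<M(r_\diamond(n_0+1))\leq M(r_\diamond(n))\leq M(r_0)$ traps $M(r_\diamond(n))$ between two fixed positive constants (here the monotonicity of $M$ and $r_\diamond(n_0+1)\leq r_\diamond(n)\leq r_0$ are used). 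This yields $r_0^{-1}\leq r^{-1}$, contradicting $r>r_0>0$, so $r_0=\infty$.

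I do not expect a serious obstacle --- the proof is structurally identical to that of Theorem~\ref{thm:rn}. The one step that is not purely mechanical is the claim $M(r_\diamond(n))^{1/n}\to1$ in the limit part: it would fail if $r_\diamond(n)$ were not known to stay bounded away from $0$ and (under the contradiction hypothesis) from $\infty$, which is precisely why the monotonicity has to be proved first and why the monotonicity of $M$ from Theorem~\ref{thm:threecircle}(c) must be invoked there. Since $r_0\geq r_\diamond(n_0+1)>0$, this causes no real trouble.
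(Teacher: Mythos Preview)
Your proposal is correct and follows exactly the approach the paper intends: the paper does not spell out a proof but simply says ``by repeating the proof of Theorem~\ref{thm:rn},'' and your argument is precisely that repetition with $M_1$ replaced by $M$ and $R=\infty$. Your treatment of the limit step is in fact slightly more careful than the paper's original, since you explicitly justify $M(r_\diamond(n))^{1/n}\to1$ via the two-sided bound $0<M(r_\diamond(n_0+1))\leq M(r_\diamond(n))\leq M(r_0)$ rather than appealing loosely to continuity.
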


It turns out that the radius $r_\diamond(n)$ is generally much easier to calculate than the optimal radius $r_*(n)$ (see Theorems~\ref{thm:rdiamond} and \ref{thm:zn}). Surprisingly,
in all of these cases the radius $r_\diamond(n)$ is also very close to optimal and the condition number $\kappa_\diamond(n)$ is close to one. Before giving a theoretical frame for these effects,
we illustrate them by two examples.

\begin{example}\label{ex:diamondexp} Since its Taylor coefficients are positive, the exponential function $f(z)=e^z$ has the maximum modulus function $M(r)=e^r$. A short calculation shows that
\[
r_\diamond(n) = n,\qquad \bar\kappa_\diamond(n) = n! \left(\frac{e}{n}\right)^n = \sqrt{2\pi n}\, (1+O(n^{-1}))\qquad (n\to \infty);
\]
where the asymptotics follows from Stirling's formula. However, the quasi-optimal condition number $\kappa_\diamond(n)$ behaves much better than just being of order $O(n^{1/2})$. In fact,
a comparison with (\ref{eq:expdata}) yields, as $n\to\infty$
\[
r_\diamond(n) \sim r_*(n),\qquad \kappa_\diamond(n)=1+\frac{5}{24n} + \frac{97}{1152n^2} + O(n^{-3}),
\]
which is very close to optimal indeed.
\end{example}

\begin{example}\label{ex:diamondbell} We consider the example of Figure~\ref{fig:condcomp}.c, that is, the entire function
\[
f(z) = e^{e^z-1}.
\]
By the positivity of the Taylor coefficients, the maximum modulus function is also given by $M(r) =e^{e^r-1}$. A short calculation yields an
explicit formula for the quasi-optimal radius,
\[
r_\diamond = W(n),
\]
with the Lambert $W$-function as introduced in §\ref{subsec:entire}. To get our hand on the corresponding condition number bound, we
realize that $n! a_n$ is the $n$-th Bell number whose asymptotics is well studied in the literature. \citeasnoun[Prop.~VIII.3]{MR2483235} prove (using the concept of $H$-admissibility that we will
study in~§\ref{sect:hayman})
\[
a_n \sim \frac{e^{e^{r_\diamond}-1}}{r_\diamond^n \sqrt{2\pi\,r_\diamond(r_\diamond+1)e^{r_\diamond}}} = \frac{e^{e^{r_\diamond}-1}}{r_\diamond^n \sqrt{2\pi\,n(r_\diamond+1)}}\qquad (n\to\infty).
\]
Hence, asymptotically, we obtain the condition number bound
\begin{equation}\label{eq:kappabarbell}
\bar\kappa_\diamond(n) = \frac{e^{e^{r_\diamond}-1}}{a_n r_\diamond^n} \sim \sqrt{2\pi\,n(r_\diamond+1)} \sim \sqrt{2\pi\, n\log n}\qquad (n\to \infty),
\end{equation}
where we have used the asymptotic expansion \cite[Eq.~(2.4.3)]{MR671583}
\begin{equation}\label{eq:Wasympt}
W(t) = \log t - \log\log t + O\left(\frac{\log\log t}{\log t}\right)\qquad (t\to\infty).
\end{equation}
Even though (\ref{eq:kappabarbell}) looks like a possible, though moderate, $O(n^{1/2}(\log n)^{1/2})$ growth of the condition number, things turn out to be much better than this. For instance, $n=100$ yields the excellent
quasi-optimal condition number $\kappa_\diamond(100) \doteq 1.013$. In §\ref{sect:hayman} we will explain the surprising effect that $\kappa_\diamond(n)$ is close to one for any order $n$,
see Corollary~\ref{cor:admissible}.

\end{example}

\section{Entire Functions of Perfectly Regular Growth}\label{sect:prg}

\subsection{Order and Type of Entire Functions}

Since $r_\diamond(n) \to \infty$, an explicit asymptotic description of the optimization (\ref{eq:rdiamond}) requires a detailed study of the growth of the maximum modulus function $M(r)$
 as $r \to\infty$. A fruitful characterization is by the order and type of $f$; for the following see \citeasnoun[Thms.~II.9.2--9.5]{MR0444912}.

The order $\rho$ of an entire function $f$ is given by
\begin{equation}\label{eq:orderM}
\rho = \limsup_{r\to\infty} \frac{\log\log M(r)}{\log r} \geq 0.
\end{equation}
Note  that polynomials have order $\rho=0$. If $0 < \rho < \infty$ (which means that $f$ is transcendental), the type $\tau$ of $f$ is given by
\begin{equation}\label{eq:typeM}
\tau = \limsup_{r\to\infty} \frac{\log M(r)}{r^\rho} \geq 0.
\end{equation}
We call $f$ to be of minimal type if $\tau=0$, of normal type if $0<\tau<\infty$, and of maximal type if $\tau=\infty$. Order and type can also be read off from the coefficients~$a_n$ of the
Taylor series; if $f$ is of order~$\rho$, then
\begin{equation}\label{eq:order}
\rho = \limsup_{n\to\infty} \frac{n \log n}{\log(1/|a_n|)};
\end{equation}
if $f$ is of order $\rho$ and type $\tau$, then
\begin{equation}\label{eq:type}
\tau = \frac{1}{e\rho}\limsup_{n\to\infty}\, n |a_n|^{\rho/n}.
\end{equation}

To arrive at an explicit asymptotic formula for $r_\diamond(n)$ (see Theorem~\ref{thm:rdiamond}) we need to consider a somewhat stricter class of entire functions \cite[p.~45]{Valiron}, though:
an entire transcendental function of order $0< \rho <\infty$ is called to be of {\em perfectly regular growth}  if the limit
\begin{equation}\label{eq:prg}
\tau = \lim_{r \to \infty} \frac{\log M(r)}{r^\rho}
\end{equation}
exists and is positive and finite; $f$ is then of normal type $\tau$. The following fundamental theorem is extremely helpful for the purpose of identifying such functions; for a proof see \citeasnoun[p.~108]{Valiron}.\nocite{Wiman1916,Valiron1923}

\begin{theorem}[Wiman 1916, Valiron 1923]\label{thm:wiman1} Let $f$ be an entire transcendental function. If $f$ is the solution of a
holonomic\footnote{Holonomic differential equations are homogeneous linear with polynomial coefficients.}  differential equation of order $q$,
 then $f$ is of perfectly regular growth with a rational order $\rho \geq 1/q$.
\end{theorem}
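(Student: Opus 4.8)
The plan is to prove the Wiman--Valiron--Valiron theorem (Theorem~\ref{thm:wiman1}) by a careful analysis of the indicial behavior of a holonomic equation at the irregular singular point $z=\infty$. Let $f$ satisfy a holonomic equation $\sum_{j=0}^q p_j(z) f^{(j)}(z)=0$ with polynomial coefficients $p_j$, $p_q\not\equiv 0$. The key structural input is the classical theory of formal solutions at an irregular singularity (Fabry, Birkhoff, Turrittin): at $z=\infty$ the equation admits a full set of formal solutions of the form
\[
\hat f_k(z) = e^{Q_k(z^{1/s})}\, z^{\lambda_k}\, \bigl(\log z\bigr)^{m_k}\,\sum_{\ell\ge 0} c_{k,\ell}\, z^{-\ell/s},
\]
where $s\in\N$ is a ramification index (dividing $q!$), each $Q_k$ is a polynomial without constant term, and $\lambda_k\in\C$, $m_k\in\N_0$. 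Moreover, by the Borel--Ritt summation / multisummability theory (or already by the older asymptotic existence theorems of Birkhoff and Trjitzinsky), each formal solution is the asymptotic expansion of an actual analytic solution in a suitable sector of opening larger than $\pi$. Our given entire transcendental $f$ is therefore a finite linear combination, sector by sector, of such genuine solutions.

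From here the first substantive step is to extract the \emph{order}. The maximum modulus $M(r)=\max_{|z|=r}|f(z)|$ is, for large $r$, dominated by $\max_k \max_{|z|=r}\bigl|e^{Q_k(z^{1/s})}\bigr|$ times polynomial-in-$r$ and $\log r$ factors, because $f$ is entire so no exponentially-decaying branch can cancel a genuinely growing one over all of $C_r$. Writing $Q_k(w)=\sum_{i=1}^{d_k} q_{k,i} w^{i}$, one has on $|z|=r$ that $\Re Q_k(z^{1/s})$ ranges up to $|q_{k,d_k}|\,r^{d_k/s}(1+o(1))$ as $\theta$ varies. Hence
\[
\log M(r) = \bigl(\max_k |q_{k,d_k}|\bigr)\, r^{\rho}\,(1+o(1)),\qquad \rho=\max_k \frac{d_k}{s},
\]
provided a single $k=k_\ast$ (or a set of $k$ sharing the same $q_{k,d_k}$) strictly dominates; the order is then the rational number $\rho=d_{k_\ast}/s$, and since $\deg p_q - \deg p_0$ and the Newton--polygon slopes controlling $d_k/s$ are bounded below in a way forced by $q$, one gets $\rho\ge 1/q$. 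This last inequality is exactly the Newton-polygon statement: the steepest slope of the Newton polygon of the operator at $\infty$ is at least $1/q$ whenever the equation has a transcendental (i.e.\ genuinely irregular) solution, because the polygon has horizontal width at most $q$ and the relevant vertex lies at least one unit above the baseline.

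The delicate point — and the step I expect to be the main obstacle — is establishing \emph{perfect regular growth}, i.e.\ that $\log M(r)/r^\rho$ converges (not merely has a $\limsup$) to a positive finite limit. The issue is twofold. First, several formal solutions $\hat f_k$ may share the \emph{same} leading rational exponent $d_k/s=\rho$ but have \emph{different} leading coefficients $q_{k,d_k}$; then on $C_r$ the dominant one depends on $\theta$, and one must check that $\max_\theta \Re Q_{k_\ast(\theta)}(z^{1/s})$ still has the clean asymptotic form $\tau r^\rho(1+o(1))$ — this is a compactness/continuity argument on the circle, since $\theta\mapsto \max_k \Re(q_{k,d_k}e^{i d_k\theta/s})$ is a fixed continuous function whose maximum $\tau$ is attained and positive. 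Second, and more seriously, the ramification means $z^{1/s}$ is multivalued, so one works on the $s$-sheeted cover and must patch the sectorial analytic solutions consistently (Stokes phenomena across the anti-Stokes rays); one needs that across each Stokes ray the recessive solution that gets added has strictly smaller growth, so it cannot spoil the leading asymptotics of $M(r)$. Handling the Stokes matrices and the possible appearance of $\log z$ powers carefully — so as to rule out oscillation in $\log M(r)/r^\rho$ — is the technical heart of the proof. I would isolate the generic case (one strictly dominant exponential term, no ties, $s=1$) first, get perfect regular growth there cleanly from the sectorial asymptotics, and then argue the general case by the circle-compactness reduction plus the observation that subdominant terms contribute only $o(r^\rho)$ to $\log M(r)$.

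Once perfect regular growth with limit $\tau>0$ is in hand, the conclusion $0<\rho<\infty$ is immediate ($\rho$ is a positive rational by the Newton-polygon bound, and finite because $Q_{k}$ are polynomials), and the definition~\eqref{eq:prg} is verified, so $f$ is of normal type $\tau$. As an alternative, lighter-weight route that avoids the full Stokes analysis: invoke a theorem of Valiron (or P\'olya, for the classical special cases) that a holonomic entire function has a maximum term $\mu(r)=\max_n |a_n| r^n$ of perfectly regular growth via the recurrence the $a_n$ satisfy — the coefficient recurrence induced by a holonomic equation is a finite-term P-recurrence $\sum_i r_i(n)\,a_{n-i}=0$ with rational $r_i$, whose solutions have $|a_n|^{1/n}$ with a precise Birkhoff--Trjitzinsky asymptotic, and then the standard Wiman--Valiron relation $\log M(r)\sim\log\mu(r)$ for entire functions of finite order transfers perfect regularity from $\mu$ to $M$. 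Either way, the substance is the same: pin down the leading growth from the irregular singularity, and rule out oscillation; I would present the coefficient-recurrence route as the cleaner writeup and relegate the Stokes picture to a remark.
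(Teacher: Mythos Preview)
The paper does not give its own proof of this theorem: it is stated as a classical result and the reader is referred to Valiron's book (``for a proof see \citeasnoun[p.~108]{Valiron}''). There is therefore no argument in the paper to compare your proposal against.

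For what it is worth, your sketch is along standard lines for this classical result. The two routes you outline---the sectorial asymptotic expansions at the irregular singular point $z=\infty$ (Fabry/Birkhoff/Turrittin formal solutions plus Stokes analysis) and the coefficient-recurrence approach via the P-recursion satisfied by the $a_n$---are both legitimate and both appear in the classical literature; Valiron's original treatment is closer to the second. Your identification of the delicate step (ruling out oscillation of $\log M(r)/r^\rho$ when several exponential factors compete, and handling ramification and Stokes transitions) is accurate. One small caution: the inequality $\rho\ge 1/q$ does not drop out of the Newton polygon quite as automatically as you suggest; it requires the hypothesis that $f$ is transcendental (so that at least one slope is positive) together with the fact that the polygon's horizontal extent is exactly $q$, and you should make that step explicit in any full writeup.
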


\begin{example}\label{ex:hyper}
The generalized hypergeometric functions
\begin{equation}\label{eq:pFq}
_pF_q(b_1,\ldots,b_p;c_1,\ldots,c_q;z) = \sum_{n=0}^\infty \frac{(b_1)_n\cdots (b_p)_n}{(c_1)_n\cdots (c_q)_n}\frac{z^n}{n!} \qquad (-b_j, -c_k \not\in \N_0)
\end{equation}
are known to be \cite[§§3.3/5.1]{MR0241700}
\begin{itemize}
\item entire transcendental if and only if $p\leq q$;\\*[-3mm]
\item satisfying a holonomic differential equation of order $\max(p,q+1)$.\\*[-3mm]
\end{itemize}
Thus, by Theorem~\ref{thm:wiman1}, if $p \leq q$, these functions are entire transcendental of perfectly regular growth with a rational order $\rho \geq 1/(q+1)$.
It is an easy exercise in dealing with Stirling's formula\footnote{Stirling's formula implies, for $-c \not\in \N_0$, that $\log |(c)_n| = n \log n -n + O(\log n)$ as $n\to\infty$.}
to calculate from (\ref{eq:order}) and (\ref{eq:type}) the order and type of these functions:
\begin{equation}\label{eq:rhotaupFq}
\rho = \frac{1}{q+1-p},\qquad  \tau = q+1-p \qquad (p\leq q).
\end{equation}
Many transcendental functions can be identified as a generalized hypergeometric function \citeaffixed[§6.2]{MR0241700}{see}; if this relation is of the
form
\[
f(z) = \alpha\, z^\mu\cdot\/ _pF_q(b_1,\ldots,b_p;c_1,\ldots,c_q;\beta\, z^\nu)\qquad (\alpha,\beta \neq 0, \mu \in \N_0, \nu \in \N)
\]
then $f$ is also of perfectly regular growth and we easily obtain, using (\ref{eq:rhotaupFq}), that the order and type of $f$ are given by
\[
\rho = \frac{\nu}{q+1-p},\qquad \tau = (q+1-p) |\beta|^{1/(q+1-p)}.
\]
With the exception of the Airy functions, all the functions in the first section of
Table~\ref{tab:functions} can directly be dealt with this way; it suffices to demonstrate just one such example in detail:
\[
\cos z =\/ _0F_1(;\tfrac12;-\tfrac14 z^2)
\]
has $p=0$, $q=1$, $\nu=2$, and $\beta = -1/4$; therefore $\rho = \tau=1$.
\end{example}

\begin{table}[tbp]
\caption{Various growth characteristics of some entire transcendental functions; all the functions with normal type are of completely regular growth and,
a fortiori, of perfectly regular growth. The column for $r_\diamond(n)$ gives the asymptotics as $n\to \infty$. The angle~$\theta$ is understood to be
restricted to $-\pi\leq\theta\leq \pi$. For $1/\Gamma(z)$ the limit given is meant to be the interval $(\liminf \kappa_\diamond(n),\limsup \kappa_\diamond(n))$. For the $q$-series $(-z;q)_\infty$ we assume that $0<q<1$. }
\vspace*{0mm}
\centerline{%
\setlength{\extrarowheight}{4pt}
{\small
\begin{tabular}{cccccccc}\hline
$f(z)$ & order $\rho$ & type $\tau$ & $r_\diamond(n)$ & $\lim \kappa_\diamond(n)$ & indicator $h(\theta)$ & $\Omega$ & $\omega$ \\*[0.75mm] \hline
$e^z$    & $1$ & $1$ & $n$ & $1$ & $\cos\theta$ & $1$ & $1$\\
$\cos(z)$ & $1$ & $1$ & $n$ & $1$ & $|\sin\theta|$ & $2$ & $1/2$\\
$\sin(z)$ & $1$ & $1$ & $n$ & $1$ & $|\sin\theta|$ & $2$ & $1/2$\\
$J_k(z)$ & $1$ & $1$ & $n$ & $1$ & $|\sin\theta|$ & $2$ & $1/2$\\
$I_k(z)$ & $1$ & $1$ & $n$ & $1$ & $|\cos\theta|$ & $2$ & $1/2$\\
$z^{-k/2} I_k(2\sqrt{z})$ & $1/2$ & $2$ & $n^2$ & $1$ & $2 \cos(\theta/2)$ & $1$ & $1$\\
$\erf(z)$ & $2$ & $1$ & $\sqrt{n/2}$ & $1$ & $(-\cos(2\theta))_+$ & $2$ & $1/2$\\
$e^{-z^2}$ & $2$ & $1$ & $\sqrt{n/2}$ & $1$ & $-\cos(2\theta)$ & $2$ & $1/2$\\
$\Ai(z)$ & $3/2$ & $2/3$ & $n^{2/3}$ & $2/\sqrt{3}$ & $-\frac23\cos(\frac32\theta)$ & $2$ & $1/\sqrt3$\\
$\Bi(z)$ & $3/2$ & $2/3$ & $n^{2/3}$ & $4/3$ & $\frac23|\cos(\frac32\theta)|$ & $3$ & $2/3$\\
$C(z)$ & $2$ & $\pi/2$ & $\sqrt{n/\pi}$ & $1$ & $\frac\pi 2|\sin(2\theta)|$ & $4$ & $1/4$\\
$S(z)$ & $2$ & $\pi/2$ & $\sqrt{n/\pi}$ & $1$ & $\frac\pi 2|\sin(2\theta)|$ & $4$ & $1/4$\\*[0.75mm]\hline\\*[-5.5mm]
$(-z;q)_\infty$ & $0$ & --- & $q^{\frac12 - n}$ & $1$ & --- & --- & ---\\
$1/\Gamma(z)$ & $1$ & $\infty$ & $e^{\Re W(\frac12-n)}$ & $(1,\infty)$ & --- & --- & --- \\
$e^{e^z-1}$ & $\infty$ & --- & $W(n)$ & $1$ & --- & --- & --- \\*[0.5mm]\hline
\end{tabular}}}
\label{tab:functions}
\end{table}

\begin{example} The Airy functions $\Ai(z)$ and $\Bi(z)$ satisfy a holonomic differential equation of second order,
\[
y''(z) - z y(z) = 0.
\]
By the theory of linear analytic differential equations \cite[p.~70]{MR658490}, because the leading coefficient of this equation is $1$,
the Airy functions are entire transcendental.
Thus, Theorem~\ref{thm:wiman1} tells us that the Airy functions are of perfectly regular growth with a rational order $\rho \geq 1/2$. The precise
values of the order and type can be read off from the asymptotic expansions \cite[Eq.~(10.4.59--65)]{MR0167642} of the Airy functions as $z\to\infty$, which imply
\[
M(r) = \frac{c}{\sqrt{\pi}\, r^{1/4}} e^{\tfrac{2}{3} r^{3/2}} (1+O(r^{-3/2}))\qquad (r\to \infty),
\]
with $c=1/2$ for $\Ai(z)$ and $c=1$ for $\Bi(z)$. Hence, by (\ref{eq:orderM}) and (\ref{eq:typeM}), we get
\[
\rho = \tfrac{3}{2},\qquad \tau = \tfrac{2}{3}.
\]
\end{example}

\subsection{The Asymptotics of the Quasi-Optimal Radius}
A short calculation shows that any entire function $f$ with the maximum modulus function
\[
\log M(r) = \tau r^\rho\qquad (\rho,\tau>0)
\]
would have the quasi-optimal radius
\begin{equation}\label{eq:rdiamondsugg}
r_\diamond(n) = \left(\frac{n}{\tau\rho}\right)^{1/\rho}.
\end{equation}
By the definition (\ref{eq:prg}), functions of perfectly regular growth satisfy the asymptotic relation
\begin{equation}\label{eq:rdiamond1}
\log M(r) = \tau r^\rho (1+ o(1)) \qquad(r\to\infty),
\end{equation}
which suggests that (\ref{eq:rdiamondsugg}) might still hold, at least asymptotically as $n\to \infty$. The
following theorem shows that this is indeed the case; however, the proof is quite involved.\footnote{Under the additional assumption of
the non-negativity of the Taylor coefficients of $f$, it is possible to give a much shorter proof of this theorem; see Remark~\ref{rem:rdiamond}.} Concrete examples of the result can be found in Table~\ref{tab:functions}.

\begin{theorem}\label{thm:rdiamond} Let $f$ be an entire transcendental function of perfectly regular growth having order $\rho$ and type $\tau$.
Then, the quasi-optimal radius satisfies
\begin{equation}\label{eq:rdiamondasympt}
r_\diamond(n) \sim \left(\frac{n}{\tau\rho}\right)^{1/\rho}\qquad (n\to \infty).
\end{equation}
\end{theorem}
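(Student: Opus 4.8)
The plan is to show that $r_\diamond(n)$, defined by the strictly convex (in $\log r$) optimization problem $r_\diamond(n) = \argmin_{r>0} r^{-n} M(r)$, is asymptotically trapped between $\bigl((1-\varepsilon)n/(\tau\rho)\bigr)^{1/\rho}$ and $\bigl((1+\varepsilon)n/(\tau\rho)\bigr)^{1/\rho}$ for every $\varepsilon>0$ and all large $n$. The natural object to work with is $\phi_n(r) = \log M(r) - n\log r$, whose unique minimizer is $r_\diamond(n)$; since $\phi_n$ is strictly convex in $t=\log r$, it suffices to exhibit, for each small $\varepsilon>0$, two values $r_-<r_+$ bracketing the candidate $\rho$-th root of $n/(\tau\rho)$ such that $\phi_n$ is decreasing at $r_-$ and increasing at $r_+$; by convexity the minimizer then lies in $[r_-,r_+]$, and taking $\varepsilon\to 0$ gives the claimed asymptotic equivalence. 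The only information available about $M$ is the perfectly-regular-growth relation $\log M(r) = \tau r^\rho(1+o(1))$ as $r\to\infty$; crucially this does \emph{not} give control of $M'(r)$, and indeed by Theorem~\ref{thm:threecircle}(a) $M$ need not even be differentiable everywhere, so one cannot simply differentiate $\phi_n$ and solve $M'(r)/M(r) = n/r$.

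First I would avoid derivatives entirely and argue with values of $\phi_n$ only. Fix $\varepsilon>0$ and set $r_\pm = \bigl((1\pm\delta)n/(\tau\rho)\bigr)^{1/\rho}$ for a suitably chosen small $\delta = \delta(\varepsilon)$; both $r_\pm\to\infty$ as $n\to\infty$. The key computation is the comparison of $\phi_n$ at a pair of nearby radii, say $r$ and $\lambda r$ for a fixed ratio $\lambda$ close to $1$: one has
\[
\phi_n(\lambda r) - \phi_n(r) = \bigl(\log M(\lambda r) - \log M(r)\bigr) - n\log\lambda
= \tau r^\rho\bigl(\lambda^\rho - 1 + o(1)\bigr) - n\log\lambda,
\]
using (\ref{eq:rdiamond1}) at both $r$ and $\lambda r$ (legitimate since $\lambda r\to\infty$ as well). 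Substituting $r^\rho = (1\pm\delta)n/(\tau\rho)$ shows this difference equals $n\bigl[(1\pm\delta)\rho^{-1}(\lambda^\rho-1) - \log\lambda + o(1)\bigr]$. Since $\rho^{-1}(\lambda^\rho-1) = \log\lambda + O((\lambda-1)^2)$ near $\lambda=1$, choosing $\lambda = 1+\eta$ with $\eta$ of the same sign as $\mp 1$ and of size comparable to $\delta$ makes the bracket, for $r=r_+$, strictly negative for all large $n$ (so $\phi_n$ strictly decreases going from $r_+$ toward smaller radii, i.e.\ $r_\diamond(n) < r_+$ eventually), and symmetrically, with the opposite sign of $\eta$, strictly positive at $r=r_-$ (so $r_\diamond(n) > r_-$ eventually). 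Here the $o(1)$ terms are uniform because we are evaluating at finitely many radii per $n$ and they all tend to infinity.

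Next I would assemble these two one-sided bounds. By the strict convexity of $\phi_n$ in $\log r$ (Theorem~\ref{thm:barcond}(b)), the function $t\mapsto\phi_n(e^t)$ has a unique minimum, and the inequalities $\phi_n(\lambda_- r_-) < \phi_n(r_-)$ and $\phi_n(\lambda_+ r_+) < \phi_n(r_+)$ with $\lambda_-<1<\lambda_+$ (and $\lambda_\pm r_\pm$ still bracketing $(n/(\tau\rho))^{1/\rho}$ up to a factor $1+O(\delta)$) force the unique minimizer $r_\diamond(n)$ to lie strictly between those two radii for all large $n$. Therefore
\[
(1-C\delta)\Bigl(\tfrac{n}{\tau\rho}\Bigr)^{1/\rho} \leq r_\diamond(n) \leq (1+C\delta)\Bigl(\tfrac{n}{\tau\rho}\Bigr)^{1/\rho}
\]
for all $n$ large enough, with $C$ an absolute constant; since $\delta>0$ was arbitrary, letting $\delta\to 0$ yields $r_\diamond(n) \sim (n/(\tau\rho))^{1/\rho}$.

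\textbf{Main obstacle.} The delicate point is extracting a \emph{two-sided} conclusion from a growth hypothesis that only controls $\log M(r)$ up to a multiplicative $1+o(1)$ and says nothing about $M'$. The resolution above — comparing $\phi_n$ at two radii in a fixed ratio $\lambda$, and then optimizing the choice of $\lambda$ against $\delta$ — is precisely what turns the crude logarithmic asymptotics into a sharp localization of the argmin. One must be careful that the $o(1)$ in (\ref{eq:rdiamond1}) is applied at radii that genuinely go to infinity (true here, since $r_\pm\asymp n^{1/\rho}\to\infty$) and that the finitely-many comparison radii used for each $n$ do not spoil uniformity (they do not). I expect the bulk of the actual write-up to be the elementary but slightly fiddly bookkeeping of how small $\eta$ must be chosen relative to $\delta$ so that the sign of the bracket $(1\pm\delta)\rho^{-1}(\lambda^\rho-1) - \log\lambda$ is definite; the convexity input then does all the structural work of converting local sign information into a global bracketing of $r_\diamond(n)$.
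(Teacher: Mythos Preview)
Your approach is essentially the paper's: both trap the minimizer of the strictly convex (in $\log r$) function $\phi_n(r)=\log M(r)-n\log r$ using only the value asymptotics $\log M(r)\sim\tau r^\rho$ and convexity, never differentiating $M$. The paper packages this via the substitution $r=(ne^\eta/\tau)^{1/\rho}$, reducing the problem to showing that the minimizer $\eta_n$ of $f_n(\eta)=e^\eta(1+o(1))-\rho^{-1}\eta$ tends to $\log\rho^{-1}$; it then sandwiches $f_n$ between the explicit convex functions $f_{\pm\epsilon}(\eta)=e^\eta(1\pm\epsilon)-\rho^{-1}\eta$ and uses $f_n(\eta_\epsilon)\leq f_\epsilon(\eta_\epsilon)=f_{-\epsilon}(\underline\eta_\epsilon)\leq f_n(\underline\eta_\epsilon)$ (and the analogous inequality at $\overline\eta_\epsilon$) to pin down $\eta_n\in[\underline\eta_\epsilon,\overline\eta_\epsilon]$. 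Your two-point comparison $\phi_n(\lambda r)-\phi_n(r)$ is the same sandwich, expressed without the change of variable.

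One bookkeeping slip to fix: in your second paragraph the subscripts on $\lambda_\pm$ are swapped. To force $r_-<r_\diamond(n)<r_+$ you need $\phi_n(\lambda r_-)<\phi_n(r_-)$ with $\lambda>1$ (so the minimizer lies to the \emph{right} of $r_-$) and $\phi_n(\lambda r_+)<\phi_n(r_+)$ with $\lambda<1$ (minimizer to the \emph{left} of $r_+$). As written, $\lambda_-<1$ at $r_-$ and $\lambda_+>1$ at $r_+$ would push the minimizer \emph{outside} $[r_-,r_+]$. Relatedly, in the preceding paragraph the bracket at $r_-$ with $\eta>0$ is $(1-\delta)\rho^{-1}(\lambda^\rho-1)-\log\lambda\approx -\delta\eta<0$, not positive; but negative is precisely what you want, since it gives $\phi_n(\lambda r_-)<\phi_n(r_-)$ with $\lambda>1$ and hence $r_\diamond(n)>r_-$ by strict convexity. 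Once these signs are straightened out your argument goes through and matches the paper's.
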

\begin{proof} The difficulty of the proof is to deal with the simultaneous limits $r\to \infty$ and $n\to\infty$ whose coupling
has yet to be established. To this end we introduce a transformed variable $\eta$ by
\[
r = \left(\frac{n e^\eta}{\tau}\right)^{1/\rho}.
\]
We rewrite (\ref{eq:rdiamond1}) in the form
\[
\log(r^{-n}M(r)) = n e^\eta (1+ o(1)) - \frac{n}{\rho} \eta - \frac{n}{\rho} \log \frac{n}{\tau}  =  n\cdot f_n(\eta) - \frac{n}{\rho} \log \frac{n}{\tau},
\]
defining functions $f_n(\eta)$ that satisfy
\[
f_n(\eta) = e^\eta(1+o(1)) -  \rho^{-1} \eta;
\]
note that the estimate $o(1)$ holds locally uniform in $\eta$ as $n\to \infty$.
By the properties of the maximum modulus function $M$ stated in Theorem~\ref{thm:threecircle}, we know that these functions $f_n$
are strictly convex in $\eta$ and coercive, which means
\[
f_n(\eta) \to \infty \qquad (\eta \to \pm\infty).
\]
The quasi-optimal radius $r_\diamond(n)$, which, by definition, minimizes $r^{-n} M(r)$, is now given in the form
\[
r_\diamond(n) = \left(\frac{n e^{\eta_n}}{\tau}\right)^{1/\rho},
\]
where $\eta_n$ is the unique minimizer of $f_n(\eta)$. The assertion of the theorem is therefore equivalent to $\lim_{n\to \infty} \eta_n = \log \rho^{-1}$, which remains to be proven.

Establishing the limit of $\eta_n$ proceeds by constructing a convex enclosure of $f_n$ for large $n$: for $\epsilon > 0$ small, we define the strictly convex functions
\[
f_{\pm \epsilon}(\eta) = e^\eta(1\pm\epsilon) - \rho^{-1} \eta.
\]
The minimizer of $f_\epsilon$ is explicitly given by
\[
\eta_\epsilon = \argmin f_{\epsilon}(\eta) = \log\frac{1}{\rho(1+\epsilon)}.
\]
Since $f_{-\epsilon}(\eta) < f_\epsilon(\eta)$ for all $\eta$, and because $f_{-\epsilon}$ is convex and coercive, there exist points $\underline{\eta}_{\epsilon}$ and
$\overline{\eta}_{\epsilon}$ with $\underline{\eta}_{\epsilon} < \eta_\epsilon < \overline{\eta}_{\epsilon}$ satisfying
\[
f_{-\epsilon}(\underline{\eta}_{\epsilon}) = f_{-\epsilon}(\overline{\eta}_{\epsilon}) = f_\epsilon(\eta_\epsilon).
\]
It is clear that $\underline{\eta}_{\epsilon},\overline{\eta}_{\epsilon} \to \log \rho^{-1}$ as $\epsilon\to 0$; in particular, $\underline{\eta}_{\epsilon}$ and
$\overline{\eta}_{\epsilon}$ remain bounded. By the asymptotics of $f_n$ as $n\to\infty$, we have, for $n \geq n_\epsilon$,
\begin{align*}
f_n(\eta_\epsilon) \leq f_\epsilon(\eta_\epsilon) = f_{-\epsilon}(\underline{\eta}_{\epsilon}) \leq f_n(\underline{\eta}_{\epsilon}),\\*[1mm]
f_n(\eta_\epsilon) \leq f_\epsilon(\eta_\epsilon) = f_{-\epsilon}(\overline{\eta}_{\epsilon}) \leq f_n(\overline{\eta}_{\epsilon}).
\end{align*}
Thus,
the strictly convex function $f_n$ is neither strictly increasing nor strictly decreasing between the points $\underline{\eta}_{\epsilon}$ and $\overline{\eta}_{\epsilon}$. Hence, its minimizer $\eta_n$ must lie there,
\[
\underline{\eta}_{\epsilon} < \eta_n < \overline{\eta}_{\epsilon}.
\]
Now, taking the limit $n\to \infty$ yields
\[
\underline{\eta}_{\epsilon} \leq \liminf_{n\to\infty} \eta_n \leq \limsup_{n\to\infty} \eta_n \leq \overline{\eta}_{\epsilon}.
\]
Finally, letting $\epsilon\to0$ proves that $\lim_{n\to\infty} = \log \rho^{-1}$ as required.
\end{proof}

\begin{remark}\label{rm:mboundprg}
By means of (\ref{eq:rdiamondasympt}) and (\ref{eq:mbound2}) we can estimate the number of nodes $m_\epsilon$ that a trapezoidal sum would need to achieve
the relative approximation error $\epsilon$ if we choose the quasi-optimal radius $r=r_\diamond(n)$. To this end we recall the Taylor series
\begin{equation}\label{eq:Wasympt2}
W(z) = \sum_{n=1}^\infty (-1)^{n-1} n^{n-1}\, \frac{z^n}{n!} \qquad (|z|< e^{-1})
\end{equation}
of the Lambert $W$-function \citeaffixed[§2.3]{MR671583}{see} and obtain
\begin{equation}\label{eq:mboundprg}
m_\epsilon \approx e n + \rho \log\epsilon^{-1}.
\end{equation}
Note how close this is already to the lower bound $m>n$ given by the sampling condition~(\ref{eq:sampling}).
\end{remark}

\subsection{An Upper Bound of the Quasi-Optimal Condition Number}\label{sect:WimanValiron}

At a first sight the precise asymptotic description (\ref{eq:rdiamondasympt}) of the quasi-optimal radius $r_\diamond(n)$ does
not tell us much about the size of the corresponding condition number $\kappa_\diamond(n)$. In fact, restricting ourselves to subsequences
of $n$ which make the limes superior in~(\ref{eq:type}) a proper limit, we just get
\begin{equation}\label{eq:firstguess}
\log \kappa_\diamond(n) \leq \log \bar\kappa_\diamond(n) = o(n)\qquad (n\to \infty).
\end{equation}
Such a weak estimate could not even exclude a super-polynomial growth of the condition number.
However, we can do much better (see the explicit asymptotic bound (\ref{eq:kappabarbound}) below) by optimizing the upper bound
\[
\bar\kappa_\diamond(n) \leq \frac{M(r)}{|a_n| r^n} \qquad (r>0)
\]
from a dual point of view: by choosing the radius $r$ in a way, such that the modulus of $a_n r^n$ becomes maximal among all normalized Taylor coefficients;
which directly leads us into studying the Wiman--Valiron theory of entire functions.
For an account of the basics of this theory see \citeasnoun[IV.1--76]{MR0170986}; surveys of some more refined recent results can be found in \citeasnoun{MR0385095} and \citeasnoun[Chap.~1.4]{MR1464198}.

The fundamental quantities of the Wiman--Valiron theory are the \emph{maximum term} of an entire function $f$ with Taylor coefficients $a_n$ at a given radius $r$, defined by
\begin{equation}\label{eq:maxterm}
\mu(r) = \max_{n} |a_n|r^n,
\end{equation}
and the corresponding maximal index taking this value, called the {\em central index},
\begin{equation}\label{eq:centindex}
\nu(r) = \max \{n: |a_n|r^n = \mu(r)\}.
\end{equation}
The asymptotic properties of these quantities are described in the following theorem; for a proof see \citeasnoun[IV.68]{MR0170986}.\nocite{45.0641.02}\nocite{47.0301.01}

\begin{theorem}[Wiman 1914]\label{thm:mu1} If the entire function $f$ is of perfectly regular growth with order $\rho$ and type $\tau$, then
\[
\log M(r) \sim \log \mu(r) \sim \tau r^\rho,\qquad \nu(r) \sim \tau \rho r^\rho \qquad (r \to \infty).
\]
\end{theorem}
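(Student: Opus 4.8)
The relation $\log M(r)\sim\tau r^\rho$ is nothing but the defining property \eqref{eq:prg} of perfectly regular growth, so there are really only two things to establish: that $\log\mu(r)\sim\log M(r)$, and that the central index satisfies $\nu(r)\sim\tau\rho r^\rho$. The plan is to get the first from two elementary inequalities between $\mu$ and $M$, and the second by exploiting the convexity of $\log\mu$ in $\log r$ to "differentiate" the asymptotics of the first.

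For the comparison of $\mu$ and $M$, I would use, on the one hand, Cauchy's coefficient estimate $|a_n|r^n\le M(r)$, which gives $\mu(r)\le M(r)$; and, on the other hand, for any fixed $\lambda>1$,
\[
M(r)\le\sum_{n\ge0}|a_n|r^n=\sum_{n\ge0}\bigl(|a_n|(\lambda r)^n\bigr)\lambda^{-n}\le\mu(\lambda r)\,\frac{\lambda}{\lambda-1}.
\]
Taking logarithms in the second inequality and invoking \eqref{eq:prg}, one gets (writing $s=\lambda r$) that $\log\mu(s)\ge\lambda^{-\rho}\tau s^\rho(1+o(1))$, hence $\liminf_{s\to\infty}\log\mu(s)/(\tau s^\rho)\ge\lambda^{-\rho}$; letting $\lambda\downarrow1$ yields $\liminf\ge1$, while $\mu(s)\le M(s)$ gives $\limsup\le1$. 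Therefore $\log\mu(r)\sim\tau r^\rho\sim\log M(r)$.

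For the central index, the structural observation is that $t\mapsto\Phi(t):=\log\mu(e^t)=\max_n\bigl(\log|a_n|+nt\bigr)$ is, as the upper envelope of a family of affine functions with integer slopes, convex and piecewise affine, and that its right-hand derivative at $t$ equals the \emph{largest} index attaining the maximum, i.e.\ $\Phi'_+(t)=\nu(e^t)$ in the convention of \eqref{eq:centindex}. Convexity then sandwiches $\nu(r)$ between a backward and a forward difference quotient of $\log\mu$: for any $\lambda>1$,
\[
\frac{\log\mu(r)-\log\mu(r/\lambda)}{\log\lambda}\le\nu(r)\le\frac{\log\mu(\lambda r)-\log\mu(r)}{\log\lambda}.
\]
Substituting the asymptotics $\log\mu\sim\tau(\cdot)^\rho$ just obtained, the upper bound gives $\limsup_{r\to\infty}\nu(r)/(\tau r^\rho)\le(\lambda^\rho-1)/\log\lambda$ and the lower bound gives $\liminf\ge(1-\lambda^{-\rho})/\log\lambda$; since both expressions tend to $\rho$ as $\lambda\downarrow1$, we conclude $\nu(r)\sim\tau\rho r^\rho$.

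The one genuinely delicate point is this last step: one cannot in general differentiate an asymptotic equivalence termwise, and it is precisely the convexity of $\log\mu$ as a function of $\log r$ that legitimizes the one-sided difference-quotient sandwich; one must also keep track of the fact that $\nu$ corresponds to the \emph{right} derivative at the countably many breakpoints of $\Phi$, which is exactly what the "$\max$" in \eqref{eq:centindex} encodes. It is worth stressing, finally, that the hypothesis of perfectly regular growth supplies a genuine limit in \eqref{eq:prg} (rather than a mere $\limsup$ as in \eqref{eq:typeM}), and this is what makes all three asymptotic equivalences hold as $r\to\infty$ without having to pass to subsequences.
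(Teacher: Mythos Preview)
Your argument is correct and is, in fact, the classical route: the sandwich $\mu(r)\le M(r)\le \frac{\lambda}{\lambda-1}\mu(\lambda r)$ for the first asymptotics, and the convexity of $t\mapsto\log\mu(e^t)$ together with $\nu(r)$ being its right derivative for the second. Note, however, that the paper does not supply a proof of this theorem at all; it simply cites P\'olya--Szeg\H{o} (Problems and Theorems in Analysis, IV.68), where essentially the same two-step argument is laid out as a guided exercise. So there is no discrepancy of approach to report --- you have reconstructed the standard proof that the cited reference contains.
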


We restrict ourselves to those entire functions $f$ of perfectly regular growth for which eventually, if $n$ is only large enough, each term $|a_n| r^n$  (with $a_n \neq 0$)
can be made the \emph{unique} maximum term for a properly chosen radius. All the functions of Table~\ref{tab:functions} belong to this class.

\begin{remark}\label{rem:hyper}
If $a_n\neq 0$ for $n$ large enough, then this property is known \citeaffixed[IV.43]{MR0170986}{see} to be equivalent to the fact that $|a_n/a_{n+1}|$ becomes eventually a strictly increasing sequence. This criterion is, for instance, satisfied by the generalized hypergeometric functions (\ref{eq:pFq}) with $p\leq q$:
we find
\[
\left|\frac{a_n}{a_{n+1}}\right| = (n+1) \left|\frac{(n+c_1)\cdots(n+c_q)}{(n+b_1)\cdots(n+b_p)}\right| \sim n^{q+1-p} + O(n^{q-p})\qquad (n\to\infty),
\]
which is therefore strictly increasing if $n$ is only large enough.
\end{remark}

Thus, if $a_n\neq 0$ and $n$ is large enough, then there will be a radius $\bar r_n$ with
\[
n = \nu(\bar r_n),\qquad |a_n| \bar r_n^{n} = \mu(\bar r_n).
\]
Theorem~\ref{thm:mu1} yields the asymptotics (where $n$ runs only through those indices with $a_n \neq 0$)
\[
n = \nu(\bar r_n) \sim \tau \rho \bar r_n^\rho \qquad (n\to\infty),
\]
which implies, in view of Theorem~\ref{thm:rdiamond}, the remarkable asymptotic duality
\begin{equation}\label{eq:dualres}
\bar r_n \sim \left(\frac{n}{\tau\rho}\right)^{1/\rho} \sim r_\diamond(n)\qquad (n \to \infty).
\end{equation}
We thus expect the bound (recall that $r_\diamond(n)$ is defined as the minimizer of $\bar\kappa(n,r)$)
\begin{equation}\label{eq:Mmubound}
\bar\kappa_\diamond(n) = \frac{M(r_\diamond(n))}{|a_n| r_\diamond(n)^n} \leq \frac{M(\bar r_n)}{|a_n| \bar r_n^n} = \frac{M(\bar r_n)}{\mu(\bar r_n)}
\end{equation}
to be quite sharp for large $n$. Now, one of the deep results of the Wiman--Valiron theory is the following explicit bound of the ratio $M(r)/\mu(r)$ in general;
for a proof see \citeasnoun[Thm.~6]{MR0385095}.

\begin{theorem}[Wiman 1914, Valiron 1920]\label{thm:wiman2} Let $f$ be an entire function of finite order $\rho$. Then, for each $\epsilon >0$, there is an exceptional set $E_\epsilon$
of relative logarithmic density smaller than $1/(1+\epsilon)$ such that
\[
M(r) < \rho(1+\epsilon) \mu(r) \sqrt{2\pi \log \mu(r)}\qquad (r \not\in E_\epsilon).
\]
\end{theorem}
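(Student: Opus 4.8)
The plan is to run the classical Wiman--Valiron argument: bound $M(r)$ by the whole Taylor series and then count how many terms are comparable to the maximum term $\mu(r)$. First I would write the trivial majorization
\[
M(r) \le \sum_{n=0}^\infty |a_n|\,r^n = \mu(r)\sum_{n=0}^\infty \frac{|a_n|\,r^n}{\mu(r)} =: \mu(r)\,S(r),
\]
so that everything reduces to estimating $S(r)\ge 1$. A crude bound already follows by splitting the sum at the central index of a slightly larger radius, giving $S(r)\le\nu(2r)+2$; for transcendental $f$ this pins down the order in the form $\log M(r)\sim\log\mu(r)$, but only yields a power of $r$, not its square root. To reach the sharp growth I would pass to the layer-cake identity
\[
S(r) = \int_0^\infty \#\{\,n : |a_n|\,r^n > e^{-u}\mu(r)\,\}\; e^{-u}\,du,
\]
which turns the estimate into a counting problem for the indices whose term lies within a factor $e^{-u}$ of the maximum term.

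The heart of the matter is the geometry of the Newton polygon of $f$, i.e.\ the convexity of $x\mapsto\log\mu(e^x)=\max_n(\log|a_n|+nx)$, whose right derivative is the central index $\nu(r)$. Passing to the Legendre dual, one extracts the parabolic estimate
\[
|a_n|\,r^n \lesssim \mu(r)\exp\!\left(-\frac{(n-\nu(r))^2}{2\,\beta(r)}\right),\qquad \beta(r) := \frac{d\nu}{d\log r},
\]
to be read in a suitably averaged sense, since $\nu$ is a non-decreasing step function; equivalently $\#\{\,n : |a_n|r^n>e^{-u}\mu(r)\,\} \lesssim 2\sqrt{2u\,\beta(r)}$. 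Inserting this into the layer-cake integral and computing $\int_0^\infty 2\sqrt{2u}\,e^{-u}\,du=\sqrt{2\pi}$ yields $S(r)\lesssim\sqrt{2\pi\,\beta(r)}$, hence $M(r)\lesssim\mu(r)\sqrt{2\pi\,\beta(r)}$.

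It remains to trade the curvature $\beta(r)$ for $\log\mu(r)$, and this is where the exceptional set is forced upon us. Since $\nu$ is non-decreasing with $\log\mu(r)=\int^r\nu(t)\,\tfrac{dt}{t}$ and finite order $\rho$ caps the growth of $\nu$, a Borel-type density lemma applied to $\nu$ shows that the inequality $\beta(r)>\rho^2(1+\epsilon)^2\log\mu(r)$ can hold only on a set $E_\epsilon$ of relative logarithmic density $<1/(1+\epsilon)$ --- the two occurrences of $1+\epsilon$ being tied together precisely by the quantitative form of that lemma. Off $E_\epsilon$ one then gets $\sqrt{2\pi\,\beta(r)}\le\rho(1+\epsilon)\sqrt{2\pi\log\mu(r)}$, and the strict inequality of the theorem is recovered by dumping the finitely many small radii (where the implicit lower-order factors are not yet negligible) into $E_\epsilon$, which does not change its density.

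I expect the main obstacle to be exactly this last step. The quantity $\beta(r)=d\nu/d\log r$ is really a (generally singular) measure, so both the parabolic bound and the passage from curvature to $\log\mu(r)$ must be handled through careful averaging and the density estimate rather than naive pointwise differentiation; extracting the sharp constant $\rho$ and the sharp density $1/(1+\epsilon)$ \emph{simultaneously} is the delicate Wiman--Valiron bookkeeping of the cited reference, and it is what separates this statement from its easy crude version.
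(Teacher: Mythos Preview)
The paper does not give its own proof of this theorem: it is quoted from the literature with the pointer ``for a proof see \citeasnoun[Thm.~6]{MR0385095}'' (Hayman's survey of the Wiman--Valiron method). So there is no in-paper argument to compare your proposal against.

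That said, your outline is the classical Wiman--Valiron route and is the one you would find, in one form or another, in the cited reference: majorize $M(r)$ by the full series, rewrite the normalized sum via a layer-cake integral, use the convexity of $\log\mu(e^x)$ (equivalently, the Newton polygon) to get the Gaussian envelope $|a_n|r^n\lesssim\mu(r)\exp(-(n-\nu)^2/2\beta)$, integrate to produce the factor $\sqrt{2\pi\beta(r)}$, and finally invoke a Borel-type density lemma to replace $\beta(r)$ by $\rho^2(1+\epsilon)^2\log\mu(r)$ off an exceptional set of the stated logarithmic density. Your layer-cake identity and the Gaussian integral $\int_0^\infty 2\sqrt{2u}\,e^{-u}\,du=\sqrt{2\pi}$ are correct, and your diagnosis of where the difficulty lies---that $\beta=d\nu/d\log r$ is a singular measure, so both the parabolic bound and the density estimate must be done through averages rather than pointwise derivatives, and that the sharp constant $\rho$ and the sharp density $1/(1+\epsilon)$ have to be extracted simultaneously---is exactly right. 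A full proof would need you to make the ``$\lesssim$'' precise (the parabolic envelope is only valid for $n$ in a window around $\nu(r)$ controlled by a comparison of $\nu$ at nearby radii, and the tails are handled separately), but as a plan this matches the standard treatment.
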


\citeasnoun{MR690698} has characterized those entire functions of finite order for which there are no exceptional radii, that is, for which $E_\epsilon = \emptyset$.
However, we did not bother to check her complicated conditions for any concrete functions. Let us simply assume the weaker condition that the sequence $\bar r_n$  does eventually not belong to $E_\epsilon$ for all $\epsilon > 0$. We would then obtain from Theorems~\ref{thm:mu1} and \ref{thm:wiman2}, and from (\ref{eq:dualres}) and (\ref{eq:Mmubound}),
the asymptotic bound (where $n$ runs only through those indices with $a_n \neq 0$)
\begin{equation}\label{eq:kappabarbound}
\kappa_\diamond(n) \leq \bar\kappa_\diamond(n) \leq \rho\sqrt{2\pi\log\mu(\bar r_n)}  \sim \sqrt{2\pi\rho n} \qquad (n\to\infty).
\end{equation}
Note that this bound is consistent with the results obtained in Example~\ref{ex:diamondexp} for $f(z) = e^z$, in which particular case the bound
of $\bar\kappa_\diamond(n)$ is even sharp; quite a success for such a general approach. In preparation of §\ref{sect:growth}, we rephrase (\ref{eq:kappabarbound})
by introducing yet another growth characteristics of $f$, namely the quantity
\begin{equation}\label{eq:omega}
0 \leq \omega = \limsup_{n\to\infty: a_n \neq 0} \frac{\bar\kappa_\diamond(n)}{\sqrt{2\pi\rho n}} \leq 1.
\end{equation}
See Table~\ref{tab:functions},  and also \citeasnoun[IV.50]{MR0170986}, for some examples of $\omega$.

\section{Relation to the Saddle-Point Method}\label{sect:saddle}

The results of the last section have shown that, for a certain class of entire functions of perfectly regular growth, the quasi-optimal condition number $\kappa_\diamond(n)$
grows at worst like
\[
1 \leq \kappa_\diamond(n) \leq \bar\kappa_\diamond(n) = O(n^{1/2}) \qquad (n\to\infty).
\]
However, as we have seen in Examples~\ref{ex:diamondexp} and \ref{ex:diamondbell}, there are cases where the quasi-optimal condition number is asymptotically
optimal, actually satisfying the best of all possible asymptotic bounds, $\kappa_\diamond(n) \sim 1$. We now develop a methodology which can be used to understand and prove this
highly welcome effect for a large class of entire functions;
concrete such examples will follow in the next sections.

\begin{table}[tbp]
\caption{For $f(z)=\Ai(z)$, a comparison of the quasi-optimal radius $r_\diamond(n)$ with its asymptotic value (\ref{eq:rdiamondasympt}) as taken from Table~\ref{tab:functions}. This asymptotic value is already quite accurate for small $n$. The value of $r_\diamond(n) = |z_n| $ was actually computed by numerically solving the saddle point
equation $z_n f'(z_n)/f(z_n)=n$ in the complex plane. Note that $\lim_{n\to\infty} \kappa_\diamond(n) = 2/\sqrt{3}\doteq 1.15470$, see (\ref{eq:airycond}).}
\vspace*{0mm}
\centerline{%
\setlength{\extrarowheight}{3pt}
{\small\begin{tabular}{rrcrc}\hline
$n$ & $r_\diamond(n)$ & $\kappa_\diamond(n)$ & $n^{2/3}$ & $\kappa(n,n^{2/3})$\\ \hline
$1$ &      $1.21575$ & $1.37413$ &  $  1.00000$ & $1.56499$\\
$10$ &     $4.72421$ & $1.19188$ &  $  4.64159$ & $1.21120$\\
$100$ &   $21.58047$ & $1.15832$ &  $ 21.54435$ & $1.16003$\\
$1000$ & $100.01668$ & $1.15506$ &  $100.00000$ & $1.15523$\\*[0.5mm]\hline
\end{tabular}}}
\label{tab:airy}
\end{table}

\subsection{The Saddle-Point Equation}
The key lies in the observation \cite[Lemma~6]{MR0385095} that the maximum modulus function $M$ of an entire function $f$ satisfies, except for a set of isolated radii (see also
Theorem~\ref{thm:threecircle}), the equation
\[
\left. r \frac{d}{dr} \log M(r) \right|_{r=r_0}= \left. z \frac{d}{dz} \log f(z)\right|_{z=z_0},
\]
where $z_0\in\C$ is one of the points for which $|z_0|=r_0$ and $|f(z_0)| = M(r_0)$. We apply this observation
to the quasi-optimal radius $r_n=r_\diamond(n)$ which, by definition, minimizes $r^{-n} M(r)$. If not accidentally one of those isolated exceptions, this radius must fulfill the
differential optimality condition
\[
\left. r \frac{d}{dr} \log M(r) \right|_{r=r_n} = n.
\]
Thus, there is a complex number $z_n$ with
\[
r_n = |z_n|,\qquad M(r_n) = |f(z_n)|,
\]
that satisfies the transcendental equation
\[
n = \left. z \frac{d}{dz} \log f(z)\right|_{z=z_n} = z_n \frac{f'(z_n)}{f(z_n)}.
\]
This equation can be rewritten in the form
\begin{equation}\label{eq:zn_2}
F'(z_n) = 0,\qquad F(z) = z^{-n} f(z).
\end{equation}
For functions $F(z)$ that are analytic in a neighborhood of a point $z_n$ (with $F(z_n) \neq 0$) it is well known \citeaffixed[§5.2]{MR671583}{see}
that $F'(z_n)=0$ holds if and only if the modulus $|F(z)|$ forms a saddle at $z=z_n$. Since, by construction, $|F(z)|$ has a local maximum at the saddle point $z_n$ in the angular direction,
it must thus show a local minimum in the radial direction there; see Figure~\ref{fig:mountain} for an illustration. On the other hand, by the convexity properties of
the maximum modulus function $M$ stated in Theorem~\ref{thm:threecircle}, any saddle point $z_n$ of $|F(z)|$ satisfying $|f(z_n)| = M(|z_n|)$  such that the saddle is oriented this way (local minimum in the radial direction and local
maximum in the angular direction) will give us in turn the \emph{unique} quasi-optimal radius $r_\diamond(n) =r_n= |z_n|$. We have thus proven the following theorem.

\begin{figure}[tbp]
\begin{center}
\begin{minipage}{0.495\textwidth}
\begin{center}
\includegraphics[width=\textwidth]{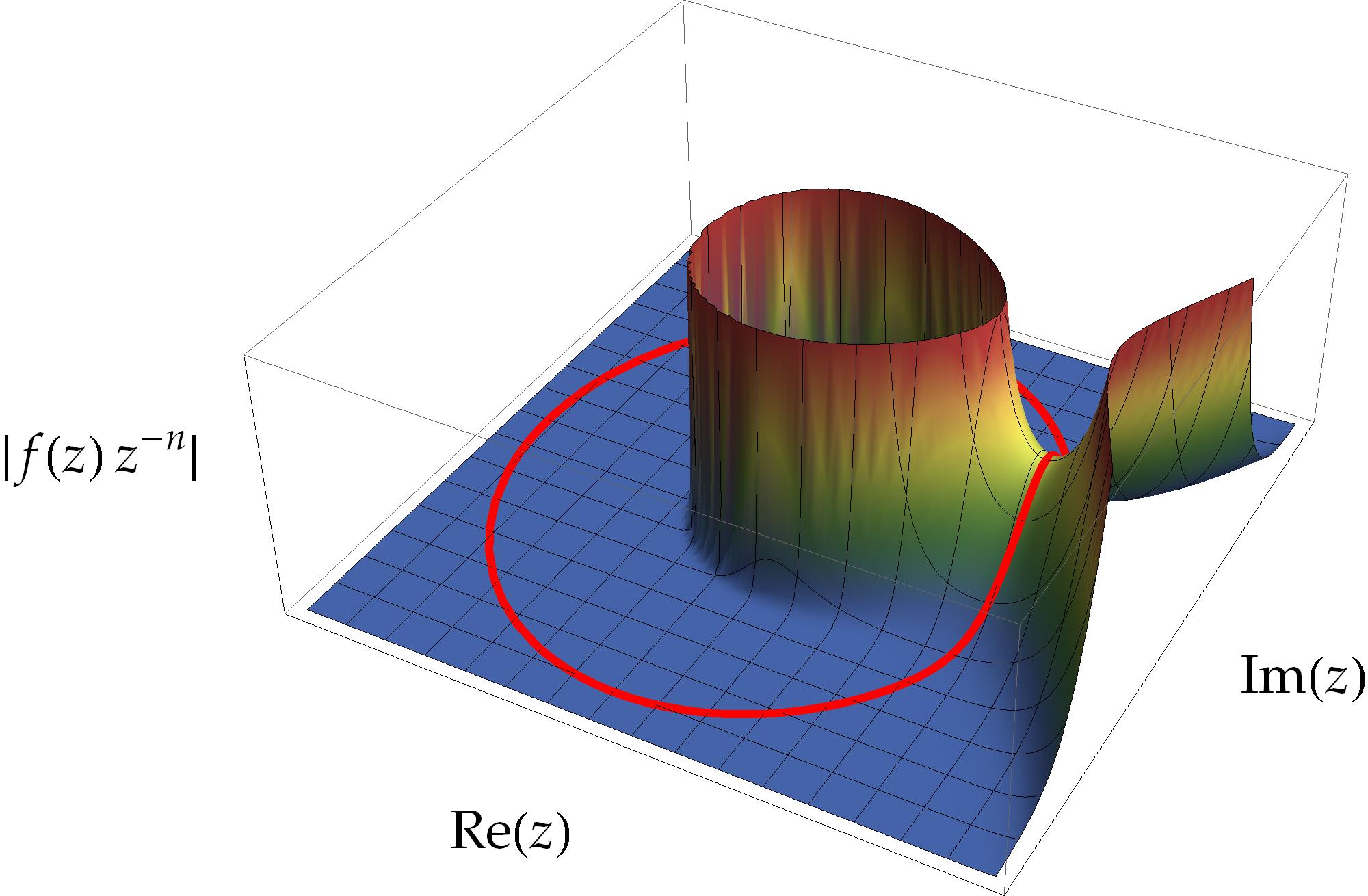}\\*[-1.5mm]
{\tiny a.\;\; $f(z)=e^z$}
\end{center}
\end{minipage}
\hfill
\begin{minipage}{0.495\textwidth}
\begin{center}
\includegraphics[width=\textwidth]{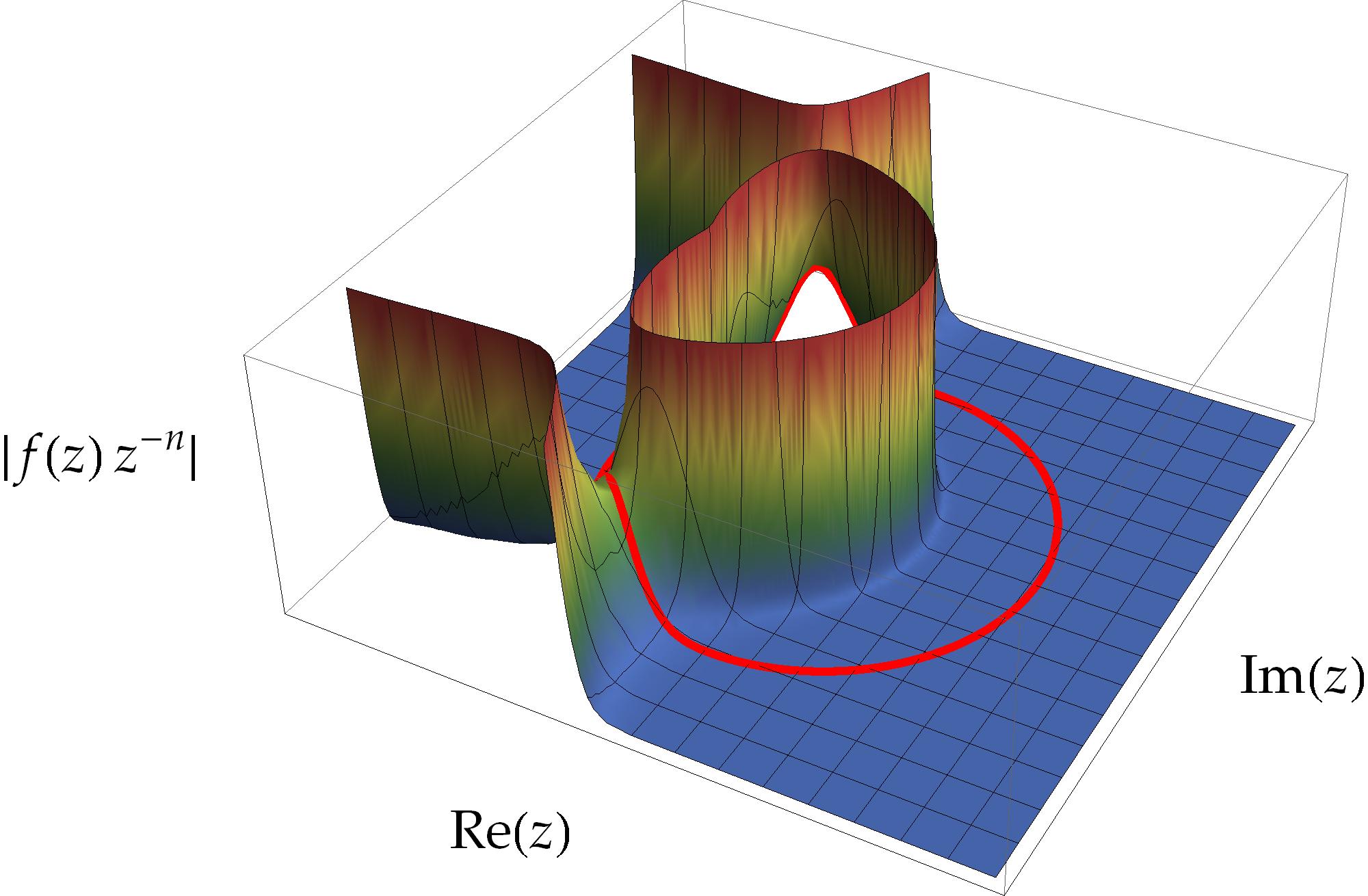}\\*[-1.5mm]
{\tiny b.\;\; $f(z)=\Ai(z)$}
\end{center}
\end{minipage}
\end{center}
\caption{Plots of $|z^{-n} f(z)|$ for $n=31$; left: $f(z)=e^z$; right: $f(z)=\Ai(z)$. The solid curve (red) is the image of the circle $|z|=r_\diamond(n)$;
showing that the maximum modulus along this circle is taken right at some saddle points. Note that the circle leaves these saddle points approximately
in the direction of steepest descent. The left plot explains nicely the qualitative differences between the plots
in Figure~\protect\ref{fig:exp}: where a circle gets close to being a level line of $|z^{-n} f(z)|$, there must be oscillations of the integrand of the Cauchy integral.
}\label{fig:mountain}
\end{figure}

\begin{theorem}\label{thm:zn} Let $f$ be an entire transcendental function and let $z_n \in \C$ be a solution of the saddle-point equation $F'(z_n)=0$ with $F(z)=z^{-n} f(z)$, that is,
\begin{subequations}\label{eq:rn_zn}
\begin{equation}\label{eq:zn_1}
n = z_n \frac{f'(z_n)}{f(z_n)}.
\end{equation}
If $z_n=r_n e^{i\theta_n}$ satisfies $|f(z_n)| = M(|z_n|)$, $\partial_{\theta\theta} |F(r_n e^{i\theta_n})| < 0$, and
$\partial_{rr} |F(r_n e^{i\theta_n})| > 0$,
then we get the following representation of the quasi-optimal radius:
\begin{equation}
r_\diamond(n) = |z_n|.
\end{equation}
\end{subequations}
On the other hand, if $r_\diamond(n)$ is a point of differentiability of $M(r)$, then there is a solution $z_n$ of the saddle-point equation that satisfies these three conditions.
\end{theorem}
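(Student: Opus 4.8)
The plan is to make the heuristic argument sketched in the paragraphs preceding the statement rigorous, with the strict log--log convexity of the maximum modulus (Theorem~\ref{thm:threecircle}(b), equivalently Theorem~\ref{thm:barcond}(b)) doing the structural work in both implications. Throughout write $g(r):=r^{-n}M(r)$, so that $\log g(r)=\log M(r)-n\log r$ is strictly convex in $\log r$ and coercive; by the discussion following Theorem~\ref{thm:barcond} it has a \emph{unique} minimiser, which by definition~(\ref{eq:rdiamond}) is $r_\diamond(n)$.

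For the first implication, assume $z_n=r_ne^{i\theta_n}$ satisfies the three stated conditions. Since $r_n>0$ and $|f(z_n)|=M(r_n)>0$, the function $F(z)=z^{-n}f(z)$ is analytic and non-vanishing in a neighbourhood of $z_n$, so $\log|F|$ is harmonic there; because $F'(z_n)=0$, the gradient of $\log|F|$ --- hence of $|F|$ --- vanishes at $z_n$, and in particular $\partial_r|F(re^{i\theta_n})|=0$ at $r=r_n$. Together with $\partial_{rr}|F(r_ne^{i\theta_n})|>0$ this makes $r\mapsto|F(re^{i\theta_n})|$ have a \emph{strict} local minimum at $r=r_n$. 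Since $M(r)\geq|f(re^{i\theta_n})|$ gives $g(r)\geq|F(re^{i\theta_n})|$ for every $r>0$, with equality at $r=r_n$ (here $|f(z_n)|=M(r_n)$ is used), the function $g$ itself has a strict local minimum at $r_n$; by uniqueness of the minimiser, $r_\diamond(n)=|z_n|$. Note that the radial convexity is the only hypothesis doing real work here; the other two merely certify that $z_n$ is a saddle sitting over the maximum-modulus circle.

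For the converse, put $r_n:=r_\diamond(n)$ and assume $M$ is differentiable at $r_n$. As $r_n$ is the interior minimiser of the now-differentiable $g$, the first-order condition yields $r\frac{d}{dr}\log M(r)\big|_{r=r_n}=n$. At a point of differentiability of $M$ the identity quoted above (\cite[Lemma~6]{MR0385095}) asserts $r\frac{d}{dr}\log M(r)\big|_{r=r_0}=z\frac{d}{dz}\log f(z)\big|_{z=z_0}$ for every $z_0$ with $|z_0|=r_0$ and $|f(z_0)|=M(r_0)$; choosing such a point $z_n=r_ne^{i\theta_n}$ (which exists by compactness of $C_{r_n}$) we get $z_nf'(z_n)/f(z_n)=n$, i.e.\ $F'(z_n)=0$, while $|f(z_n)|=M(r_n)$ holds by construction. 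For the remaining two conditions, maximality of $\theta\mapsto|f(r_ne^{i\theta})|$ at $\theta_n$ gives $\partial_{\theta\theta}|F(r_ne^{i\theta_n})|\leq0$, and harmonicity of $\log|F|$ at the critical point $z_n$ (traceless Hessian in the conformal coordinates $\log r$ and $\theta$) forces $r_n^2\,\partial_{rr}|F(r_ne^{i\theta_n})|=-\partial_{\theta\theta}|F(r_ne^{i\theta_n})|$, hence $\partial_{rr}|F(r_ne^{i\theta_n})|\geq0$.

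The main obstacle is upgrading these two inequalities to the \emph{strict} inequalities in the statement. Both cease to be strict precisely when the maximum of $|f|$ over $C_{r_n}$ at $\theta_n$ is degenerate (flat to second order), equivalently when the circle $C_{r_n}$ osculates a level line of $|F|$ at $z_n$ --- exactly the configuration that produces the oscillatory integrands of Figure~\ref{fig:exp}. The expectation is that differentiability of $M$ at $r_\diamond(n)$ excludes this: a degenerate maximum of order $2k\geq4$ makes $g$ deviate from its minimum like $|r-r_\diamond(n)|^{2k/(2k-1)}$, so that $M$ is still once differentiable there but not twice, and this is the hook through which the Wiman--Valiron structure rules the configuration out. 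Should that turn out insufficient, one either sharpens the hypothesis to twice differentiability of $M$ at $r_\diamond(n)$, or --- when several maximum-modulus points lie on $C_{r_n}$ --- selects one at which the maximum is non-degenerate, or passes to a limit along nearby indices; closing this gap cleanly is where the real work lies, the rest being bookkeeping around Theorems~\ref{thm:threecircle} and~\ref{thm:barcond}.
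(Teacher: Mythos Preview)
Your argument tracks the paper's own almost line for line: the forward direction via the squeeze $g(r)\geq |F(re^{i\theta_n})|$ with equality at $r_n$, plus uniqueness of the log--log convex minimiser, is exactly the content of the sentence ``by the convexity properties of the maximum modulus function $M$ \ldots\ any saddle point $z_n$ of $|F(z)|$ satisfying $|f(z_n)|=M(|z_n|)$ such that the saddle is oriented this way \ldots\ will give us in turn the unique quasi-optimal radius''; and for the converse you invoke the same cited identity (Hayman's Lemma~6) and the same saddle-structure observation.

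On the strictness issue you flag in the converse: the paper does \emph{not} close this gap either. It simply appeals to the fact that $F'(z_n)=0$ with $F(z_n)\neq 0$ makes $|F|$ ``form a saddle'' and then reads off that an angular maximum forces a radial minimum; this is the non-degenerate picture and tacitly assumes $F''(z_n)\neq 0$. Your Hessian-trace computation makes explicit what the paper leaves implicit, and your honest identification of the degenerate case (monkey saddle, $\partial_{\theta\theta}|F|=\partial_{rr}|F|=0$) is a genuine residual subtlety in both treatments rather than a defect peculiar to your write-up. In practice the theorem is used only as a computational device (solve the saddle-point equation, check the orientation), and for that purpose the non-strict version you establish is already adequate.
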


Theorem~\ref{thm:zn} allows us the actual computation of $r_\diamond(n)$; see Tables~\ref{tab:airy}/\ref{tab:Bi}
and §\ref{sect:gamma} for some examples.

\subsection{The Saddle-Point Method}\label{sect:saddlepointmethod}

Taking the quasi-optimal radius $r_\diamond=r_\diamond(n)$ we write the Cauchy integral (\ref{eq:an}) in the form
\[
a_n = \frac{1}{2\pi} \int_0^{2\pi} F(r_\diamond e^{i\theta}) \,d\theta
\]
with $F(z)=z^{-n} f(z)$. If $|f(z)|$, and thus $|F(z)|$, is small for those $z$ on the circle that are not close to the saddle points $z_n$ of Theorem~\ref{thm:zn}, the
integral localizes to the vicinity of these saddle points and we can estimate
\[
a_n \approx \frac{1}{2\pi}\sum_{\substack{\theta_n : z_n=r_\diamond e^{i\theta_n}\\*[1mm]\text{saddle point}}} \int_{\theta\approx\theta_n}  F(r_\diamond e^{i\theta}) \,d\theta.
\]
It is actually possible to estimate each of the integrals
\[
\frac{1}{2\pi}\int_{\theta\approx\theta_n}  F(r_\diamond e^{i\theta}) \,d\theta = \frac{1}{2\pi}\int_{\theta\approx\theta_n}  e^{\log F(r_\diamond e^{i\theta})} \,d\theta
\]
by the Laplace method \citeaffixed[§5.7]{MR671583}{see}. To this end we expand
the function $\log f(r e^{i\theta})$ with respect to the angular variable $\theta$; for $\theta\to\theta_*$ we calculate that
\begin{subequations}\label{eq:taylor_angular_gen}
\begin{equation}\label{eq:taylor_angular}
\log f (r e^{i\theta}) = \log f(z_*) + i a(z_*) (\theta-\theta_*) - \tfrac{1}{2} b(z_*) (\theta-\theta_*)^2 + O(\theta-\theta_*)^3
\end{equation}
with $z_* = r e^{i\theta_*}$ and the coefficients
\begin{equation}\label{eq:taylor_angular_ab}
a(z) = z\,\frac{f'(z)}{f(z)},\qquad b(z) = z a'(z).
\end{equation}
\end{subequations}
By specifying as the expansion point $z_*$ a saddle point $z_n = r_\diamond e^{i\theta_n}$ as in Theorem~\ref{thm:zn} we thus have $a(z_n)=n$ and therefore
\[
\log F(r_\diamond e^{i\theta}) = \log F(z_n) - \tfrac{1}{2} b(z_n) (\theta-\theta_n)^2  + O(\theta-\theta_n)^3;
\]
hence, by taking real parts,
\[
\log |F(r_\diamond e^{i\theta})| =  \log |F(z_n)| - \tfrac{1}{2} \Re\, b(z_n) (\theta-\theta_n)^2  + O(\theta-\theta_n)^3.
\]
In particular, if $|F(z)|$ takes when moving along the circle a \emph{strict} local maximum at the saddle point $z_n$, we infer that necessarily
\begin{equation}\label{eq:b_zn}
\Re\, b(z_n) > 0.
\end{equation}
Thus, the Laplace method is applicable and gives, by ``trading tails'',
\begin{multline}\label{eq:laplace}
\frac{1}{2\pi}\int_{\theta\approx\theta_n}  e^{\log F(r_\diamond e^{i\theta})} \,d\theta \approx \frac{1}{2\pi}\int_{\theta\approx\theta_n} e^{\log F(z_n) - \tfrac12 b(z_n)(\theta-\theta_n)^2} \,d\theta\\
 \approx \frac{1}{2\pi}\int_{-\infty}^\infty e^{\log F(z_n) - \tfrac12 b(z_n)\theta^2} \,d\theta = \frac{F(z_n)}{\sqrt{2\pi b(z_n)}}.
\end{multline}
Summarizing our results so far, we get the following estimate of the Taylor coefficient $a_n$:
\begin{equation}\label{eq:ansaddle}
a_n \approx \frac{1}{\sqrt{2\pi}}\sum_{\substack{\theta : z=r_\diamond e^{i\theta}\\*[1mm]\text{saddle point}}}  \frac{F(z)}{\sqrt{b(z)}}.
\end{equation}
Correspondingly, we estimate the mean modulus by
\begin{multline}\label{eq:Mnsaddle}
\frac{1}{2\pi} \int_0^{2\pi} |F(r_\diamond e^{i\theta})|\,d\theta  \approx\frac{1}{2\pi} \sum_{\substack{\theta_n : z_n=r_\diamond e^{i\theta_n}\\*[1mm]\text{saddle point}}} \int_{\theta\approx \theta_n} |F(r_\diamond e^{i\theta})|\,d\theta\\*[2mm]
= \frac{1}{2\pi} \sum_{\substack{\theta_n : z_n=r_\diamond e^{i\theta_n}\\*[1mm]\text{saddle point}}} \int_{\theta\approx \theta_n}
e^{\Re\log F(r_\diamond e^{i\theta})} \,d\theta \approx
\frac{1}{\sqrt{2\pi}}\sum_{\substack{\theta : z=r_\diamond e^{i\theta}\\*[1mm]\text{saddle point}}}  \frac{|F(z)|}{\sqrt{\Re\, b(z)}}
\end{multline}
and, therefore, the quasi-optimal condition number by
\begin{equation}\label{eq:approxkappa}
\kappa_\diamond(n) = \kappa(n,r_\diamond) = \frac{\displaystyle\int_0^{2\pi} |F(r_\diamond e^{i\theta})|\,d\theta}{\left|\displaystyle\int_0^{2\pi} F(r_\diamond e^{i\theta})\,d\theta\right|}
\approx
\frac{\displaystyle\sum_{\substack{\theta : z=r_\diamond e^{i\theta}\\*[1mm]\text{saddle point}}}  \frac{|F(z)|}{\sqrt{\Re\, b(z)}}}
{\Big|\displaystyle\sum_{\substack{\theta : z=r_\diamond e^{i\theta}\\*[1mm]\text{saddle point}}}  \frac{F(z)}{\sqrt{b(z)}}\Big|}.
\end{equation}
As we will see in the following sections, for some interesting classes of entire functions our reasoning can eventually be sharpened by replacing the somewhat vague ``$\approx$''-signs
 of approximation with rigorous asymptotic equality
as $n \to \infty$. Moreover, the estimate (\ref{eq:approxkappa}) is actually quite precise even for small $n$ as is typical for such asymptotic estimates of integrals; see §\ref{sect:gamma} for an example.

\subsection{Steepest Descent}\label{sect:steepest}

In general, there is not much to further conclude about the approximate values of $\kappa_\diamond(n)$ from the estimate (\ref{eq:approxkappa}).
Thus, to get to a result like $\kappa_\diamond(n)\approx 1$ we need some additional structure:
a look at the examples of Figure~\ref{fig:mountain} tells us that there the circle of radius $r_\diamond$ passes through the saddle points of $|F(z)|$
approximately in the direction of steepest descent.
In the next sections we will explain why this is the case for some larger classes of entire functions.

From general facts about the method of steepest descent in asymptotic analysis\footnote{For a detailed exposition see \citeasnoun[Chap.~5]{MR671583},
\citeasnoun[Chap.~4]{MR2238098}, and \citeasnoun[Chap.~VIII]{MR2483235}. \citeasnoun[§5.5]{MR2383071} explain how steepest descent contours are used as an analytic tool
for obtaining numerically stable integral representations of certain special functions; a topic that is certainly closely related to the theme of this paper.}
we learn \citeaffixed[p.~84]{MR671583}{see} that the circular contour through the saddle point $z_n$
is approximately of steepest descent if and only if $b(z_n)$ is approximately real, that is, if and only if
\begin{equation}\label{eq:imb0}
\Im\, b(z_n) \approx 0.
\end{equation}
Note that this implies that the integrand in (\ref{eq:laplace}) has approximately constant phase.
In fact, geometrically it is straightforward to see that the circle is the contour of steepest descent if and only if the off-diagonal
elements of the Hessian of $G(r,\theta) = \log |F(r e^{i\theta})|$ vanish; at a saddle point $z_n = r_\diamond e^{i\theta_n}$ as in Theorem~\ref{thm:zn} we actually obtain
\begin{equation}\label{eq:hessian}
\hess\, G(r_\diamond,\theta_n) =
\begin{pmatrix}
\Re\, b(z_n)r_\diamond^{-2} & -\Im\,b(z_n) r_\diamond^{-1} \\*[2mm]
-\Im\,b(z_n) r_\diamond^{-1} & -\Re\, b(z_n)
\end{pmatrix}.
\end{equation}
Now, assume additionally that the circle of radius $r_\diamond = r_\diamond(n)$ passes through \emph{just one} saddle-point $z_n$ (this amounts for the case $\Omega=1$ in §\ref{sect:crgcondbounds}).
Then, we infer from the condition number estimate (\ref{eq:approxkappa}) and the steepest descent condition (\ref{eq:imb0}) that
\[
\kappa_\diamond(n) \approx
\frac{\displaystyle\frac{|F(z_n)|}{\sqrt{\Re\, b(z_n)}}}
{\Big|\displaystyle\frac{F(z_n)}{\sqrt{b(z_n)}}\Big|} =\sqrt[4]{1+ \left(\frac{\Im\,b(z_n)}{\Re\, b(z_n)}\right)^2}\approx  1.
\]
This line of reasoning thus explains why the best of all possible results, $\kappa_\diamond(n)\approx 1$, actually may come into place even though the
radius $r_\diamond = r_\diamond(n)$ itself was first introduced by optimizing just the upper bound $\bar\kappa(n,r)$ of the condition number.


\section{Entire Functions of Completely Regular Growth}\label{sect:growth}

\subsection{The Indicator Function}

The reasoning of §\ref{sect:steepest} relies on the remarkable fact (observed in Figure~\ref{fig:mountain}) that for certain functions the circle
passing through the relevant saddle points is approximately tangential to the contour of steepest descent. This could be understood if $F(z) = z^{-n} f(z)$ happens to grow predominantly in a radial direction. A first hint that this is exactly the right
picture is the existence of the Phragmén--Lindelöf indicator function
\begin{equation}\label{eq:indicator}
h(\theta) = \limsup_{r\to\infty} r^{-\rho} \log |f(r e^{i\theta})|
\end{equation}
for entire functions of finite order $\rho$ and normal type $\tau$. We recall some of its properties; see \citeasnoun[§II.45]{MR0444912} or \citeasnoun[§I.15/16]{MR589888} for proofs:
\begin{itemize}
\item $h(\theta)$ is $2\pi$-periodic;\\*[-3mm]
\item $h(\theta)$ is continuous and has a derivative except possibly on a countable set;\\*[-3mm]
\item if $0< \rho \leq 1/2$, then $0 \leq h(\theta) \leq \tau$; if $\rho>1/2$, then $-\tau \leq h(\theta) \leq \tau$;\\*[-3mm]
\item $\tau = \max_\theta h(\theta)$.
\end{itemize}
As it was convenient in §\ref{sect:prg} to consider the functions of perfectly regular growth, for which the limes superior in the definition (\ref{eq:typeM}) of the type $\tau$
becomes the proper limit (\ref{eq:prg}), we do the same with the limes superior in the definition of the indicator function here:

An entire function of finite order $\rho$ and normal type $\tau$ is called to be of {\em completely regular growth} \cite[Chap.~III]{MR589888} if
\begin{equation}\label{eq:crg}
h(\theta) = \lim_{r\to\infty: r\not\in E} \frac{\log|f(r e^{i\theta})|}{r^\rho},
\end{equation}
uniformly in $\theta$. Here, the exceptional set $E$ is required to have relative linear density zero; it will obviously be related to the zeros of $f$. In fact, if there
are no zeros of $f$ in an open sector containing the ray of direction $\theta$, then (\ref{eq:crg}) holds in a closed subsector without the need of an exceptional set.
An important result of \citeasnoun[p.~142]{MR589888} states that if (\ref{eq:crg}) holds just pointwise for $\theta$ in a set that is dense in $[-\pi,\pi]$, then $f$ is already
of completely regular growth. This criterion can be used to check that all of the functions in the first section of Table~\ref{tab:functions} are of completely regular growth
with the indicator functions given there: one just has to look at the known asymptotic expansions  of $f(z)$ as $z \to\infty$ within certain sectors of the complex plane,
as they are found, e.g., in \citeasnoun{MR0167642}. It is also known that the statement of Theorem~\ref{thm:wiman1} extends to completely regular functions, see \citeasnoun[p.~747]{MR1401944}.

As developed mainly by Pfluger and Levin in the 1930s, there is a deep relation between the angular density of zeros of a function $f$  of completely regular growth and the properties
of its indicator function $h(\theta)$. The following characterization of a density of zero will be of importance to us \cite[p.~155]{MR589888}:
\begin{multline}\label{eq:zerodens}
\lim_{r\to\infty} \frac{\text{\# zeros $|z|\leq r$ of $f$ in an open sector containing the ray at $\theta_0$}}{r^\rho} = 0 \\*[2mm]
\Leftrightarrow\quad \text{$h(\theta)$ is $\rho$-trigonometric in the vicinity of $\theta_0$};
\end{multline}
where a function of $\theta$ is called $\rho$-trigonometric if it is of the form $\alpha \sin(\rho \theta + \beta)$ for some real $\alpha$ and $\beta$.

\subsection{Circles Are Contours of Asymptotic Steepest Descent}

We now look at a direction $\theta_*$ in which there is the predominantly growth of $f$, that is, $h(\theta_*)=\tau$. If there are at most finitely many zeros of $f$ in an open sector
containing the ray at $\theta_*$ (which is the case for all of the functions in the first section of Table~\ref{tab:functions}), then $f$ will also be of perfectly
regular growth and the indicator will be, by (\ref{eq:zerodens}), $\rho$-trigonometric in the vicinity of $\theta_*$. In particular, we get
\begin{equation}\label{eq:h2}
h(\theta_*)=\tau, \quad h'(\theta_*)=0, \quad h''(\theta_*) = -\tau \rho^2.
\end{equation}
By the reasoning of §\ref{sect:saddle} there will be a sequence $z_n = r_\diamond e^{i\theta_n}$ (writing $r_\diamond=r_\diamond(n)$ for brevity) satisfying the saddle-point equation (\ref{eq:zn_1}) with $\theta_n \to \theta_*$ as
$n \to \infty$. To show that the circle passing through $z_n$ is asymptotically a contour of steepest descent there, we look at the Hessian of $\log |F(z_n)|$. From (\ref{eq:crg}) we first
get
\begin{align}
\log |F(z_n)| &=  \log |f(r_\diamond e^{i\theta_n})| - n \log r_\diamond\notag\\*[1mm]
& \sim r_\diamond^\rho h(\theta_n) - n \log r_\diamond\qquad (n\to\infty).\label{eq:Fasympt}
\end{align}
Next, by Theorem~\ref{thm:rdiamond} and (\ref{eq:h2}), the Hessian of the right hand side, $G(r,\theta)=r^\rho h(\theta) - n \log r$, becomes asymptotically diagonal:
\[
\hess\, G(r_\diamond,\theta_n) \sim n\rho
\begin{pmatrix}
r_\diamond^{-2} & 0 \\*[1mm]
0 & -1
\end{pmatrix}
\qquad (n\to\infty);
\]
note that this form of the Hessian is actually consistent with (\ref{eq:hessian}) and (\ref{eq:b_zn}).
Since the off-diagonal terms are zero, the $\theta$-direction is, asymptotically, the direction of steepest descent.

\subsection{Condition Number Bounds}\label{sect:crgcondbounds}

We follow the strategy of §\ref{sect:saddlepointmethod} and apply the Laplace method
to the contour integral with radius $r_\diamond=r_\diamond(n)$. However, instead of using the Taylor expansion (\ref{eq:taylor_angular_gen})
to simplify $\log F(r e^{i\theta})$ we now proceed by first recalling from §\ref{sect:steepest} that contours of steepest descent
yield integrands of an asymptotically constant phase and by next using
the indicator function (\ref{eq:crg}) to simplify $\log |F(r e^{i\theta})|$, asymptotically as $r \to \infty$.
Note that the Laplace method rigorously applies if there is a proper decay of $\log |F(r e^{i\theta})|$, as $r\to\infty$,  for directions $\theta$ far off those $\theta_*$
that belong to the saddle points. Assuming this to be the
case for the given $f$ (it can be checked to be true for all the functions in the first section of Table~\ref{tab:functions}), we get for the
Cauchy integral~(\ref{eq:an}), because of (\ref{eq:Fasympt}), (\ref{eq:h2}) and (\ref{eq:rdiamondasympt}), as $n\to\infty$:
\begin{align*}
a_n &= \frac{1}{2\pi r_\diamond^n} \int_{-\pi}^\pi e^{-i n \theta} f(r_\diamond e^{i\theta}) d\theta = \frac{1}{2\pi} \int_{-\pi}^\pi e^{\log F(r_\diamond e^{i\theta})} \,d\theta\\*[2mm]
&\sim \frac{1}{2\pi} \sum_{\theta:h(\theta)=\tau} e^{i\,\Im \log F(r_\diamond e^{i\theta})} \int_{-\infty}^\infty e^{\Re \log F(r_\diamond e^{i\theta}) + \frac12 r_\diamond^\rho (t-\theta)^2 h''(\theta)}\,dt\\*[2mm]
& =\frac{1}{2\pi} \sum_{\theta:h(\theta)=\tau} \sqrt{\frac{2\pi}{-r_\diamond^\rho\, h''(\theta)}} \,F(r_\diamond e^{i\theta})\\*[2mm]
&\sim \frac{1}{\sqrt{2\pi \rho n} \cdot r_\diamond^n} \sum_{\theta:h(\theta)=\tau} e^{-in\theta} f(r_\diamond e^{i\theta}).
\end{align*}
Likewise, we get, as $n\to \infty$,
\begin{align*}
\frac{M_1(r_\diamond)}{r_\diamond^n} &= \frac{1}{2\pi r_\diamond^n} \int_{-\pi}^\pi |f(r_\diamond e^{i\theta})| d\theta = \frac{1}{2\pi} \int_{-\pi}^\pi e^{\Re \log F(r_\diamond e^{i\theta})} \,d\theta\\*[2mm]
&\sim \frac{1}{\sqrt{2\pi \rho n} \cdot r_\diamond^n} \sum_{\theta:h(\theta)=\tau} |f(r_\diamond e^{i\theta})|
\end{align*}
and certainly
\begin{align*}
\frac{M(r_\diamond)}{r_\diamond^n} &\sim r_\diamond^{-n} \max_{\theta:h(\theta)=\tau} |f(r_\diamond e^{i\theta})|.
\end{align*}
To summarize, we have proven the following theorem.

\begin{theorem}\label{thm:crg} Let $f$ be an entire function of completely regular growth with order $\rho$, type $\tau$, and Phragmén--Lindelöf indicator function $h(\theta)$. If $f$ has
at most finitely many zeros in some sectorial neighborhoods of those rays of direction $\theta$ for which $h(\theta)=\tau$ and if~$|f|$ decays properly, for large radius $r$, in the angular direction off these rays, then
we have
 \begin{equation}\label{eq:barkappaestimatecrg}
\frac{\bar\kappa_\diamond(n)}{ \sqrt{2\pi\rho n}} \sim
\frac{\displaystyle\max_{\theta:h(\theta)=\tau} |f(r_\diamond e^{i\theta})|}{\Big|\displaystyle\sum_{\theta:h(\theta)=\tau} e^{-in\theta} f(r_\diamond e^{i\theta})\Big|}\qquad (n\to \infty: a_n \neq 0)
\end{equation}
and
 \begin{equation}\label{eq:kappaestimatecrg}
\kappa_\diamond(n) \sim \frac{\displaystyle\sum_{\theta:h(\theta)=\tau} |f(r_\diamond e^{i\theta})|}{\Big|\displaystyle\sum_{\theta:h(\theta)=\tau} e^{-in\theta} f(r_\diamond e^{i\theta})\Big|}\qquad (n\to \infty: a_n \neq 0).
\end{equation}
That is, the quasi-optimal condition number $\kappa_\diamond(n)$ of the Cauchy integral is asymptotically equal to the condition number of the finite sum $\sum_{\theta:h(\theta)=\tau} e^{-in\theta} f(r_\diamond e^{i\theta})$.
\end{theorem}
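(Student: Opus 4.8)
The statement is the rigorous counterpart of the saddle-point computation displayed immediately above it, so the plan is to organize that computation and to turn its ``$\approx$''-signs into genuine asymptotic equalities as $n\to\infty$. Write $F(z)=z^{-n}f(z)$ and $r_\diamond=r_\diamond(n)$, and let $\Theta=\{\theta\in(-\pi,\pi]:h(\theta)=\tau\}$ be the (finite, by hypothesis) set of peak directions. I would reduce the theorem to the three separate asymptotics
\[
a_n\sim\frac{1}{\sqrt{2\pi\rho n}\,r_\diamond^n}\sum_{\theta\in\Theta}e^{-in\theta}f(r_\diamond e^{i\theta}),\qquad
\frac{M_1(r_\diamond)}{r_\diamond^n}\sim\frac{1}{\sqrt{2\pi\rho n}\,r_\diamond^n}\sum_{\theta\in\Theta}|f(r_\diamond e^{i\theta})|,
\]
together with $M(r_\diamond)/r_\diamond^n\sim r_\diamond^{-n}\max_{\theta\in\Theta}|f(r_\diamond e^{i\theta})|$; the assertions (\ref{eq:barkappaestimatecrg}) and (\ref{eq:kappaestimatecrg}) then follow from $\kappa_\diamond(n)=M_1(r_\diamond)/(|a_n|\,r_\diamond^n)$ and $\bar\kappa_\diamond(n)=M(r_\diamond)/(|a_n|\,r_\diamond^n)$, the common prefactor $(\sqrt{2\pi\rho n}\,r_\diamond^n)^{-1}$ cancelling in $\kappa_\diamond$ and surviving once in $\bar\kappa_\diamond$. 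The first step toward all three is localization: by the decay hypothesis there is $\eta>0$ such that, off a union of fixed small arcs $|\theta-\theta_*|\le\delta$ around the points $\theta_*\in\Theta$, one has $\Re\log F(r_\diamond e^{i\theta})=\log|f(r_\diamond e^{i\theta})|-n\log r_\diamond$ smaller than its peak value by at least $\tfrac12\eta r_\diamond^\rho\to\infty$; hence both integrals, and the maximum, reduce up to relatively exponentially small errors to the finitely many arcs, and for $M(r_\diamond)$ one is done by (\ref{eq:crg}).

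On one such arc I would run the Laplace method centred at the saddle point. Since $f$ has only finitely many zeros in a sector around $\theta_*$, we have $f(z_n)\neq0$ for the saddle point $z_n=r_\diamond e^{i\theta_n}$ produced by Theorem~\ref{thm:zn} (with $\theta_n\to\theta_*$ by the reasoning of the previous section), so the \emph{analytic} expansion (\ref{eq:taylor_angular_gen}) gives $\log F(r_\diamond e^{i\theta})=\log F(z_n)-\tfrac12 b(z_n)(\theta-\theta_n)^2+O(\theta-\theta_n)^3$, with $b(z_n)=z_n a'(z_n)$ its exact quadratic coefficient. Complete regular growth is then used to evaluate this coefficient: zero-freeness in the sector makes $h$ locally $\rho$-trigonometric by (\ref{eq:zerodens}), so $h''(\theta_*)=-\tau\rho^2$, and comparing the Hessian of $\log|F|$ in (\ref{eq:hessian}) with that of $r^\rho h(\theta)-n\log r$, together with $r_\diamond^\rho\sim n/(\tau\rho)$ from Theorem~\ref{thm:rdiamond}, yields $\Re b(z_n)\sim-r_\diamond^\rho h''(\theta_*)\sim\rho n$ and $\Im b(z_n)=o(\rho n)$; i.e.\ the circle is asymptotically a contour of steepest descent. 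Hence the Gaussian integral contributes $F(z_n)/\sqrt{2\pi b(z_n)}\sim F(r_\diamond e^{i\theta_*})/\sqrt{2\pi\rho n}$ to $a_n$ (the phase being asymptotically constant) and $|F(z_n)|/\sqrt{2\pi\,\Re b(z_n)}\sim|F(r_\diamond e^{i\theta_*})|/\sqrt{2\pi\rho n}$ to $M_1(r_\diamond)/r_\diamond^n$; summing the finitely many arcs gives the two displayed asymptotics, and dividing gives the theorem.

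The main obstacle is the interplay of the two limits $r_\diamond\to\infty$ and $n\to\infty$: the relations (\ref{eq:crg}) and (\ref{eq:rdiamondasympt}) pin down $\log|f(r_\diamond e^{i\theta})|$ and $r_\diamond^\rho\sim n/(\tau\rho)$ only with an $o(r_\diamond^\rho)=o(n)$ slack, which after exponentiation is far from negligible on the Gaussian window $|\theta-\theta_n|\lesssim n^{-1/2}\log n$ --- this is precisely why I would use the exact analytic expansion (\ref{eq:taylor_angular_gen}) there rather than (\ref{eq:crg}), calling on complete regular growth only to identify the leading behaviour of its coefficients and to handle fixed-scale comparisons. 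Concretely, the careful parts are: (i) on the window, bounding the cubic remainder $O(\theta-\theta_n)^3$ by $o(1)$, which needs a Cauchy estimate on $a''(z)=b'(z)/z-b(z)/z^2$ in a fixed-radius disk about $z_n$ inside the zero-free sector; (ii) on the intermediate region $n^{-1/2}\log n\lesssim|\theta-\theta_*|\le\delta$, a fixed-scale quadratic upper bound for $\Re\log F$ coming from the strict concavity of the $\rho$-trigonometric $h$ (here the $o(r_\diamond^\rho)$ slack is harmless because both sides scale like $r_\diamond^\rho$), which legitimises the Laplace ``trading tails'' step; and (iii) checking that $\theta_n\to\theta_*$ fast enough that $F(r_\diamond e^{i\theta_n})\sim F(r_\diamond e^{i\theta_*})$. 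The finiteness of $\Theta$ and of the zeros in the peak sectors keeps the number of saddle contributions, and hence the errors, finite and summable.
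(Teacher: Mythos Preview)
Your reduction to the three asymptotics for $a_n$, $M_1(r_\diamond)/r_\diamond^n$, and $M(r_\diamond)/r_\diamond^n$ is exactly what the paper does, and your localization and tail-trading steps match.  The one genuine difference is in the local analysis on each arc.  You run the Laplace method via the \emph{exact} quadratic Taylor expansion (\ref{eq:taylor_angular_gen}) of $\log F$ at the saddle point $z_n=r_\diamond e^{i\theta_n}$, and then invoke completely regular growth only to identify $b(z_n)\sim\rho n$, $\Im b(z_n)=o(\rho n)$ via the Hessian comparison.  The paper explicitly takes the opposite route: it replaces (\ref{eq:taylor_angular_gen}) by the indicator approximation $\Re\log F(r_\diamond e^{i\theta})\sim r_\diamond^\rho h(\theta)-n\log r_\diamond$ and works directly at the peak direction $\theta_*$, pulling out the asymptotically constant phase factor $e^{i\,\Im\log F(r_\diamond e^{i\theta_*})}$ (justified by the steepest-descent observation of \S\ref{sect:steepest}) and then doing a real Gaussian integral with curvature $h''(\theta_*)=-\tau\rho^2$.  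Your approach buys a cleaner separation between the exact local expansion and the asymptotic input, and forces you to be honest about the $o(n)$ slack---your points (i)--(iii) are precisely the places where the paper's argument is left at the level of a formal computation.  The paper's approach is shorter because it never leaves $\theta_*$ and so avoids your step (iii), $F(r_\diamond e^{i\theta_n})\sim F(r_\diamond e^{i\theta_*})$, altogether; but it pays for this by having to justify differentiating (\ref{eq:crg}) twice in $\theta$ at $\theta_*$, which it does only heuristically.  Both arguments are at the same level of rigor and lead to the same conclusion.
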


Let us introduce the number of global maxima of the indicator function,
\begin{equation}\label{eq:Omega}
\Omega= \# \{\theta : -\pi < \theta \leq \pi, h(\theta)=\tau\}.
\end{equation}
Now, by Theorem~\ref{thm:crg}, $\Omega=1$ clearly implies that $\lim_{n\to\infty}\kappa_\diamond(n) = 1$ and that the quantity defined in (\ref{eq:omega}) satisfies $\omega = 1$;
this observation is precisely matched by two examples in Table~\ref{tab:functions}. On the other hand, if $\Omega>1$ then it seems, at a first sight, that the condition number of the
finite sum $\sum_{\theta:h(\theta)=\tau} e^{-in\theta} f(r_\diamond e^{i\theta})$ could suffer from severe cancelation.
However, as the next theorem shows, there will  be  generally no such cancelation for the class of functions considered in this section.
(But see §\ref{sect:gamma} for an example of severe resonant cancelations in a different setting.)
\begin{theorem}\label{thm:condcrg} Let $f$ be an entire function of completely regular growth which satisfies the assumptions of Theorem~\ref{thm:crg} as well as those that led to  (\ref{eq:omega}), that is, to $\omega\leq 1$.
Then, this bound can be supplemented by
\begin{equation}\label{eq:barkappaomega}
0 < \Omega^{-1} \leq \liminf_{n\to\infty: a_n \neq 0} \frac{\bar\kappa_\diamond(n)}{\sqrt{2\pi\rho n}} \leq \limsup_{n\to\infty: a_n \neq 0} \frac{\bar\kappa_\diamond(n)}{\sqrt{2\pi\rho n}}
= \omega \leq 1,
\end{equation}
and the quasi-optimal condition number $\kappa_\diamond(n)$ is asymptotically bounded as follows:
 \begin{equation}\label{eq:kappaOmega}
1 \leq \liminf_{n\to\infty: a_n \neq 0} \kappa_\diamond(n)  \leq \limsup_{n\to\infty: a_n \neq 0} \kappa_\diamond(n) \leq  \Omega \cdot \omega.
\end{equation}
In particular, we have
\[
\omega=\Omega^{-1} \quad \Rightarrow \quad \lim_{n\to \infty: a_n\neq 0} \kappa_\diamond(n) = \lim_{n\to \infty: a_n\neq 0}\frac{\bar\kappa_\diamond(n)}{\sqrt{2\pi\rho n}} = 1.
\]
\end{theorem}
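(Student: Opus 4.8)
The plan is to deduce the whole statement from the two asymptotic identities (\ref{eq:barkappaestimatecrg}) and (\ref{eq:kappaestimatecrg}) of Theorem~\ref{thm:crg} by nothing more than elementary estimates of a finite sum. I would write $\Theta=\{\theta:-\pi<\theta\leq\pi,\ h(\theta)=\tau\}$, so that $\#\Theta=\Omega$ by (\ref{eq:Omega}); the standing hypotheses (only finitely many zeros in sectorial neighbourhoods of each ray of predominant growth, and proper angular decay off these rays) already presuppose a finite set of such rays, so $\Omega<\infty$ and $\Omega^{-1}>0$. For $n$ with $a_n\neq0$ I would put $u_\theta=|f(r_\diamond(n)\,e^{i\theta})|$ for $\theta\in\Theta$ and $D(n)=\bigl|\sum_{\theta\in\Theta}e^{-in\theta}f(r_\diamond(n)\,e^{i\theta})\bigr|$, so that Theorem~\ref{thm:crg} becomes
\[
\frac{\bar\kappa_\diamond(n)}{\sqrt{2\pi\rho n}}\sim\frac{\max_{\theta\in\Theta}u_\theta}{D(n)},\qquad \kappa_\diamond(n)\sim\frac{\sum_{\theta\in\Theta}u_\theta}{D(n)}\qquad(n\to\infty:\ a_n\neq0).
\]

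Next I would record the trivial sandwich $\max_{\theta\in\Theta}u_\theta\leq\sum_{\theta\in\Theta}u_\theta\leq\Omega\max_{\theta\in\Theta}u_\theta$. Dividing through by $D(n)$ and substituting the two asymptotics yields
\[
\frac{\bar\kappa_\diamond(n)}{\sqrt{2\pi\rho n}}\leq\kappa_\diamond(n)(1+o(1))\leq\Omega\,\frac{\bar\kappa_\diamond(n)}{\sqrt{2\pi\rho n}}(1+o(1))\qquad(n\to\infty:\ a_n\neq0).
\]
The right-hand inequality together with the definition (\ref{eq:omega}) of $\omega$ gives $\limsup_{n:\,a_n\neq0}\kappa_\diamond(n)\leq\Omega\omega$, the upper bound of (\ref{eq:kappaOmega}). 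In (\ref{eq:barkappaomega}) the asserted value of $\limsup_{n:\,a_n\neq0}\bar\kappa_\diamond(n)/\sqrt{2\pi\rho n}$ is exactly the definition (\ref{eq:omega}) of $\omega$, while $\omega\leq1$ holds by the standing hypotheses via (\ref{eq:kappabarbound}).

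For the lower bounds I would use $\kappa(n,r)\geq1$ from (\ref{eq:kappa}), hence $\kappa_\diamond(n)\geq1$ and $\liminf_{n:\,a_n\neq0}\kappa_\diamond(n)\geq1$, which is the lower bound of (\ref{eq:kappaOmega}). Dividing instead $\sum_{\theta\in\Theta}u_\theta\leq\Omega\max_{\theta\in\Theta}u_\theta$ by $\Omega\,D(n)$ and inserting the asymptotics gives $\bar\kappa_\diamond(n)/\sqrt{2\pi\rho n}\geq\Omega^{-1}\kappa_\diamond(n)(1+o(1))\geq\Omega^{-1}(1+o(1))$, so $\liminf_{n:\,a_n\neq0}\bar\kappa_\diamond(n)/\sqrt{2\pi\rho n}\geq\Omega^{-1}>0$, which closes (\ref{eq:barkappaomega}). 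Finally, the ``in particular'' is a squeeze: when $\omega=\Omega^{-1}$, (\ref{eq:barkappaomega}) pins $\bar\kappa_\diamond(n)/\sqrt{2\pi\rho n}$ between a liminf that is $\geq\Omega^{-1}=\omega$ and a limsup equal to $\omega$, so this ratio converges to $\omega$; and (\ref{eq:kappaOmega}) pins $\kappa_\diamond(n)$ between $1$ and $\Omega\omega=1$, so $\kappa_\diamond(n)\to1$.

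I do not expect a genuine obstacle here: once Theorem~\ref{thm:crg} is in hand, everything is bookkeeping around the inequality $\max\leq\sum\leq\Omega\max$. The points that will need a little care are (i) that every estimate runs along the subsequence $\{n:a_n\neq0\}$, in agreement with the definition (\ref{eq:omega}); (ii) that the denominators $D(n)$ are eventually nonzero, which is implicit in the validity of (\ref{eq:barkappaestimatecrg})--(\ref{eq:kappaestimatecrg}); and (iii) the finiteness of $\Omega$, which I would justify from the zero-density dichotomy (\ref{eq:zerodens}) and the finite-zero hypothesis: $h$ is $\rho$-trigonometric near each of its global maxima, so these maxima are isolated and hence finitely many on $(-\pi,\pi]$.
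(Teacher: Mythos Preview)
Your proposal is correct and follows essentially the same route as the paper: both arguments feed the elementary sandwich $\max_{\theta\in\Theta}u_\theta\leq\sum_{\theta\in\Theta}u_\theta\leq\Omega\max_{\theta\in\Theta}u_\theta$ into the two asymptotic identities of Theorem~\ref{thm:crg} and then read off the bounds. The only cosmetic difference is in how the lower bound $\liminf\bar\kappa_\diamond(n)/\sqrt{2\pi\rho n}\geq\Omega^{-1}$ is obtained: the paper takes it directly from the triangle inequality $D(n)\leq\Omega\max_{\theta}u_\theta$ combined with (\ref{eq:barkappaestimatecrg}), whereas you route it through $\kappa_\diamond(n)\geq1$ and the right half of your sandwich; either way works.
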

\begin{proof}
The obvious estimate
\begin{equation}\label{eq:maxsumbound}
\Big|\displaystyle\sum_{\theta:h(\theta)=\tau} e^{-in\theta} f(r_\diamond e^{i\theta})\Big| \leq \sum_{\theta:h(\theta)=\tau} |f(r_\diamond e^{i\theta})| \leq \Omega\cdot \displaystyle\max_{\theta:h(\theta)=\tau} |f(r_\diamond e^{i\theta})|
\end{equation}
yields, by Theorem~\ref{thm:crg} and (\ref{eq:omega}), the asymptotic bounds asserted in (\ref{eq:barkappaomega}). Moreover,  (\ref{eq:omega}) and (\ref{eq:barkappaestimatecrg})
 imply
\[
\limsup_{n\to\infty: a_n \neq 0} \frac{\displaystyle\max_{\theta:h(\theta)=\tau} |f(r_\diamond e^{i\theta})|}{\Big|\displaystyle\sum_{\theta:h(\theta)=\tau} e^{-in\theta} f(r_\diamond e^{i\theta})\Big|} = \omega \leq 1.
\]
Hence, by using (\ref{eq:maxsumbound}) once more to estimate the numerator in (\ref{eq:kappaestimatecrg}), we get
\[
\limsup_{n\to\infty: a_n \neq 0} \kappa_\diamond(n) \leq \Omega \limsup_{n\to\infty: a_n \neq 0} \frac{\displaystyle\max_{\theta:h(\theta)=\tau} |f(r_\diamond e^{i\theta})|}{\Big|\displaystyle\sum_{\theta:h(\theta)=\tau} e^{-in\theta} f(r_\diamond e^{i\theta})\Big|} \leq \Omega \cdot \omega,
\]
which proves the asserted asymptotic bound (\ref{eq:kappaOmega}).
\end{proof}

\begin{example}
If, by the symmetries of the function $f$ in the complex plane, there is just one \emph{single} phase $\phi_n \in \R$ that allows
us the representation
\begin{equation}\label{eq:symmetry}
e^{i \phi_n} \cdot e^{-in\theta} f(r_\diamond e^{i\theta}) = \max_{\theta_*:h(\theta_*)=\tau} |f(r_\diamond e^{i\theta_*})|
\end{equation}
 for \emph{all} $\theta$ with $h(\theta)=\tau$, then we get by Theorem~\ref{thm:crg} that already the best of all possible bounds holds, namely
\begin{equation}\label{eq:best2}
\lim_{n\to \infty: a_n\neq 0} \kappa_\diamond(n) = 1,\qquad \lim_{n\to \infty: a_n\neq 0} \frac{\bar\kappa_\diamond(n)}{\sqrt{2\pi\rho n}} = \Omega^{-1}.
\end{equation}
We than have, by definition, $\omega = \Omega^{-1}$.
Note that the symmetry relation (\ref{eq:symmetry}) applies to all of the functions of the first section of Table~\ref{tab:functions}, except for the Airy functions $\Ai(z)$ and $\Bi(z)$
which will be dealt with in the next two examples.
\end{example}

\begin{example}\label{ex:airy} The point of departure for discussing the Airy function $\Ai(z)$ is the asymptotic expansion \cite[Eq.~(10.4.59)]{MR0167642}
\begin{equation}\label{eq:airyexp}
\Ai(z) \sim \frac{1}{2\pi} z^{-1/4} e^{-\frac23 z^{3/2}} \sum_{k=0}^\infty \frac{(-1)^k \Gamma(3k+\tfrac12)}{9^k \Gamma(2k+1)} z^{-3k/2}\qquad (z\to\infty: |\arg z|<\pi).
\end{equation}
This implies, by Levin's criterion given above, that $\Ai$ is of completely regular growth. Moreover, we get
\[
|\Ai(r e^{i\theta})| = \frac{1}{2\pi} r^{-1/4} e^{-\tfrac23 r^{3/2} \cos(\tfrac32\theta)} (1+ O(r^{-3/2}))\qquad (r\to\infty: |\theta|<\pi),
\]
from which we can directly read off the order $\rho=3/2$, the type $\tau=2/3$, and the Phragmén--Lindelöf indicator function
\[
h(\theta) = -\tfrac23 \cos(\tfrac32\theta) \qquad (|\theta|<\pi).
\]
Note that this indicator $h(\theta)$, continued as a $2\pi$-periodic function, is $\rho$-trigonometric exactly for $\theta \neq k \pi$ ($k \in \Z$). Thus, by Levin's general theory, there is a positive density of zeros in an arbitrary small sectorial neighborhood of the ray at $\theta=-\pi$; indeed, $\Ai(z)$ has countably many zeros along the negative real axis and
no zeros elsewhere.
We have $h(\theta)=\tau$ for $\theta=\pm\tfrac23\pi$; hence $\Omega=2$.
The expansion (\ref{eq:airyexp}) implies for these maximizing angles that
\[
\Ai(re^{\pm \frac23\pi i}) = \frac{e^{\mp \frac\pi6 i}}{2\sqrt{\pi}}r^{-1/4} e^{\tfrac23 r^{3/2}} (1+ O(r^{-3/2}))\qquad(r\to\infty),
\]
that is
\[
\arg \Ai(r e^{\pm \tfrac23 \pi i}) = \mp \frac{\pi}{6} + O(r^{-3/2})\qquad (r\to\infty).
\]
Hence we obtain, because of $h(-\tfrac23\pi)=h(\tfrac23\pi)$: as $n\to\infty$,
\begin{align*}
\Big| \sum_{\theta:h(\theta)=\tau} e^{-in\theta} \Ai(r_\diamond e^{i\theta})\Big| &\sim \left| e^{\tfrac23\pi n i} e^{\tfrac\pi6 i}+e^{-\tfrac23\pi n i} e^{-\tfrac\pi6 i}\right| \cdot \left|\Ai\left(r_\diamond e^{\tfrac23 \pi i}\right)\right|  \\*[2mm]
&= 2 \left|\cos\left(\tfrac\pi6+\tfrac23\pi n\right)\right| \cdot \left|\Ai\left(r_\diamond e^{\tfrac23 \pi i}\right)\right|,
\end{align*}
and
\[
\sum_{\theta:h(\theta)=\tau} |\Ai(r_\diamond e^{i\theta})| \sim 2 \left|\Ai\left(r_\diamond e^{\tfrac23 \pi i}\right)\right|, \qquad
 \max_{\theta:h(\theta)=\tau} |\Ai(r_\diamond e^{i\theta})| \sim  \left|\Ai\left(r_\diamond e^{\tfrac23 \pi i}\right)\right|.
\]
Now,
\[
\left|\cos\left(\tfrac\pi6+\tfrac23\pi n\right)\right| =
\begin{cases}
\sqrt{3}/2 & \qquad n \not\equiv 2 \pmod{3},\\
0 & \qquad n \equiv 2 \pmod{3},
\end{cases}
\]
in accordance with the fact that the Taylor coefficients of $\Ai(z)$ satisfy $a_n \neq 0$ if and only if $n \not\equiv 2 \pmod{3}$.
Altogether, Theorem~\ref{thm:crg} gives us then
\begin{equation}\label{eq:airycond}
\lim_{n\to\infty: a_n \neq 0} \kappa_\diamond(n) = \frac{2}{\sqrt{3}},\qquad \omega = \lim_{n\to\infty: a_n \neq 0} \frac{\bar\kappa(n)}{\sqrt{2\pi\rho n}} = \frac{1}{\sqrt{3}}.
\end{equation}
We observe that the general upper bound given in (\ref{eq:kappaOmega}) is sharp here. An illustration
of the limit result (\ref{eq:airycond}) by some actual numerical data for various $n$ can be found in Table~\ref{tab:airy}.
\end{example}

\begin{example}\label{ex:Bi} As for $\Ai(z)$ in the last example, the discussion of $\Bi(z)$ begins with its asymptotic expansions \cite[Eq.~(10.4.63--65)]{MR0167642} as $z\to\infty$ in different sectors of the complex plane. Skipping the details, we get that $\Bi$ is of completely regular growth with
order $\rho=\frac32$, type $\tau=\frac23$, and Phragmén--Lindelöf indicator
\[
h(\theta) = \tfrac23 |\cos(\tfrac32\theta)| \qquad (|\theta|<\pi).
\]
Thus, $h(\theta)=\tau$ for $\theta=\pm\tfrac23\pi$ and also for $\theta=0$; hence $\Omega=3$. The asymptotic expansions yield
\[
\Bi(re^{\pm \frac23\pi i}) = \frac{e^{\pm \frac\pi3 i}}{2\sqrt{\pi}}r^{-1/4} e^{\tfrac23 r^{3/2}} (1+ O(r^{-3/2}))\qquad(r\to\infty)
\]
and
\[
\Bi(r) = \frac{1}{\sqrt{\pi}}r^{-1/4} e^{\tfrac23 r^{3/2}} (1+ O(r^{-3/2}))\qquad(r\to\infty),
\]
that is $\arg B(r) = 0$ and
\[
\arg \Bi(re^{\pm \frac23\pi i})  = \pm \tfrac\pi3 + O(r^{-3/2})\qquad (r\to\infty).
\]
Hence, as $r\to\infty$,
\[
|\Bi(re^{\pm \frac23\pi i})| \sim \tfrac12 |\Bi(r)|
\]
and thus, as $n\to\infty$,
\begin{align*}
\Big| \sum_{\theta:h(\theta)=\tau} e^{-in\theta} \Bi(r_\diamond e^{i\theta})\Big| &\sim \left| \frac12 e^{\tfrac23 \pi n i}e^{-\tfrac\pi3 i} +  \frac12 e^{-\tfrac23 \pi n i}e^{\tfrac\pi3 i} + 1\right|\cdot |\Bi(r_\diamond)|\\*[2mm]
&= \left| 1 + \cos(\tfrac\pi3 - \tfrac23 \pi n)  \right|\cdot |\Bi(r_\diamond)|,
\end{align*}
and
\[
\sum_{\theta:h(\theta)=\tau} |\Bi(r_\diamond e^{i\theta})| \sim 2 |\Bi(r_\diamond)|, \qquad
 \max_{\theta:h(\theta)=\tau} |\Bi(r_\diamond e^{i\theta})| \sim  |\Bi(r_\diamond)|.
\]
Now,
\[
\left| 1 + \cos(\tfrac\pi3 - \tfrac23 \pi n)  \right| =
\begin{cases}
3/2 & \qquad n \not\equiv 2 \pmod{3},\\
0 & \qquad n \equiv 2 \pmod{3},
\end{cases}
\]
in accordance with the fact that the Taylor coefficients of $\Bi(z)$ satisfy $a_n \neq 0$ if and only if $n \not\equiv 2 \pmod{3}$. Altogether, Theorem~\ref{thm:crg} gives us then
\begin{equation}\label{eq:Bicond}
\lim_{n\to\infty: a_n \neq 0} \kappa_\diamond(n) = \frac{4}{3},\qquad \omega = \lim_{n\to\infty: a_n \neq 0} \frac{\bar\kappa(n)}{\sqrt{2\pi\rho n}} = \frac{2}{3}.
\end{equation}
An illustration
of the limit result (\ref{eq:Bicond}) by some actual numerical data for various $n$ can be found in Table~\ref{tab:Bi}.
\end{example}

\begin{table}[tbp]
\caption{For $f(z)=\Bi(z)$, a comparison of the quasi-optimal radius $r_\diamond(n)$ with its asymptotic value (\ref{eq:rdiamondasympt}) as taken from Table~\ref{tab:functions}. This asymptotic value is already quite accurate for small $n$. The value of $r_\diamond(n) = |z_n| $ was actually computed by numerically solving the saddle point
equation $z_n f'(z_n)/f(z_n)=n$ in the complex plane. Note that $\lim_{n\to\infty} \kappa_\diamond(n) = 4/3\doteq 1.33333$, see (\ref{eq:Bicond}).}
\vspace*{0mm}
\centerline{%
\setlength{\extrarowheight}{3pt}
{\small\begin{tabular}{rrcrc}\hline
$n$ & $r_\diamond(n)$ & $\kappa_\diamond(n)$ & $n^{2/3}$ & $\kappa(n,n^{2/3})$\\ \hline
$1$ &      $1.36603$ & $1.35408$ &  $  1.00000$ & $1.57640$\\
$10$ &     $4.72421$ & $1.37605$ &  $  4.64159$ & $1.39833$\\
$100$ &   $21.58047$ & $1.33751$ &  $ 21.54435$ & $1.33948$\\
$1000$ & $100.01668$ & $1.33375$ &  $100.00000$ & $1.33394$\\*[0.5mm]\hline
\end{tabular}}}
\label{tab:Bi}
\end{table}

\subsection{A Resonant Case: \boldmath $f(z)=1/\Gamma(z)$ \unboldmath}\label{sect:gamma}

In the statement of Theorem~\ref{thm:crg} the condition on the zeros of $f$ cannot be disposed of: if $f$ possesses infinitely many zeros in the vicinity of
its directions of predominant growth,
then it may happen that a {\em pair} of saddle points recombines in the limit $r\to \infty$ to a {\em single} maximum of the indicator function $h(\theta)$.
That is, even though we have $\Omega=1$ in the limit,
the contributions of the two saddle points may yield resonances in (\ref{eq:approxkappa}) as $n\to \infty$; thus $\kappa_\diamond(n)$, as well as $\kappa_*(n)$, may behave quite irregular.

We demonstrate such a behavior for the entire function $f(z) = 1/\Gamma(z)$, whose zeros are located at $0,-1,-2,-3,\ldots$ This function has order $\rho=1$,
 but is of maximal type $\tau=\infty$ \citeaffixed[p.~27]{MR589888}{see}. Therefore, at a first sight, the results so far do not seem to be applicable at all. However, using Valiron's concept of a \emph{proximate order} $\rho(r)$
 it is possible to extend the definition of functions of completely regular growth and of their indicator functions in such a way that the results cited
above still hold true \citeaffixed[§I.12]{MR589888}{see}.
By Stirling's formula, and Euler's reflection formula
\[
\Gamma(z) \cdot \Gamma(1-z) = \frac{\pi}{\sin(\pi z)},
\]
we get the following asymptotic expansion, valid uniformly in $\theta$:
\begin{equation}\label{eq:gammaind}
\frac{\log |1/\Gamma(re^{i\theta})|}{r\log r} = -\cos\theta + \frac{\cos\theta + \theta\sin\theta}{\log r} + O(r^{-1})\qquad (r\to\infty:r \not\in E),
\end{equation}
where the set $E$ of possible exceptions has relative linear density zero. From this we can read off that $1/\Gamma(z)$ is a function of completely regular growth with
a proximate order $\rho(r)$ given by $r^{\rho(r)} = r\log r$; the indicator function is then
\[
h(\theta) = -\cos\theta.
\]
Now, the problem is that this indicator becomes asymptotically maximal at the {\em single} direction $\theta=\pm \pi$, which is actually the direction of the ray that
contains the countable many zeros of $1/\Gamma(z)$. In fact, a closer look at (\ref{eq:gammaind}) reveals that this single maximum is formed, in the limit $r\to \infty$,
through a recombination of \emph{two} distinct maxima for finite $r$.
And indeed, Figure~\ref{fig:gamma}.a shows quite an irregular behavior of the quasi-optimal condition number $\kappa_\diamond(n)$ (the picture would be essentially the same
for the optimal condition number $\kappa_*(n)$ itself, though much more difficult to compute).

\begin{figure}[tbp]
\begin{center}
\begin{minipage}{0.495\textwidth}
\begin{center}
\includegraphics[width=\textwidth]{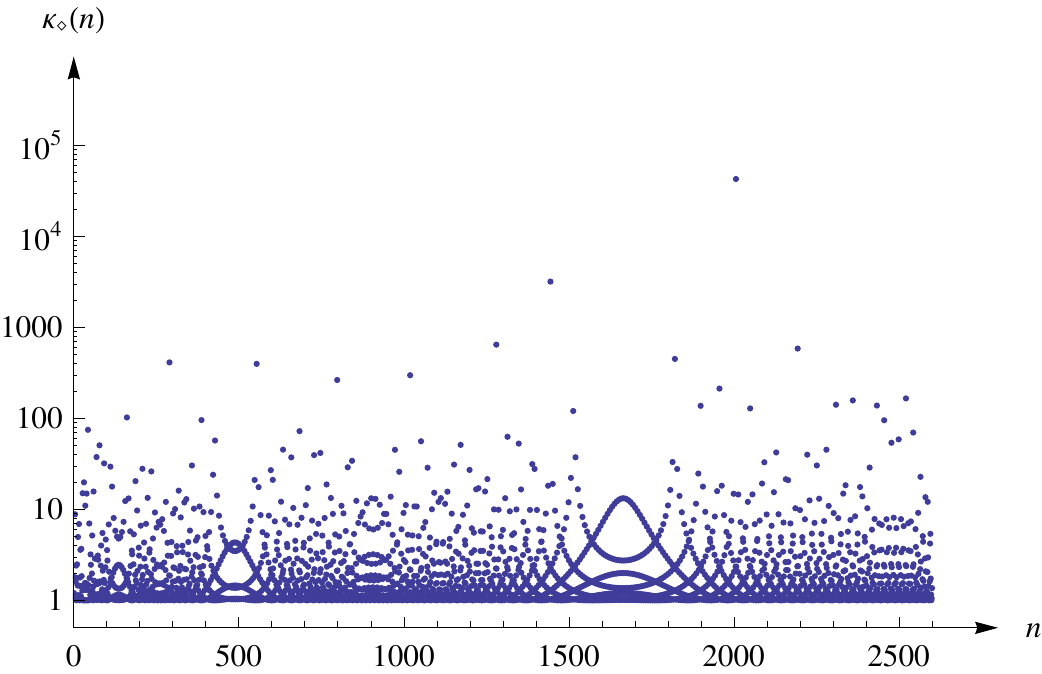}\\*[-1.5mm]
{\tiny a.\;\; quasi-optimal condition number $\kappa_\diamond(n)$ ($1\leq n \leq 2600$)}
\end{center}
\end{minipage}
\hfill
\begin{minipage}{0.495\textwidth}
\begin{center}
\includegraphics[width=\textwidth]{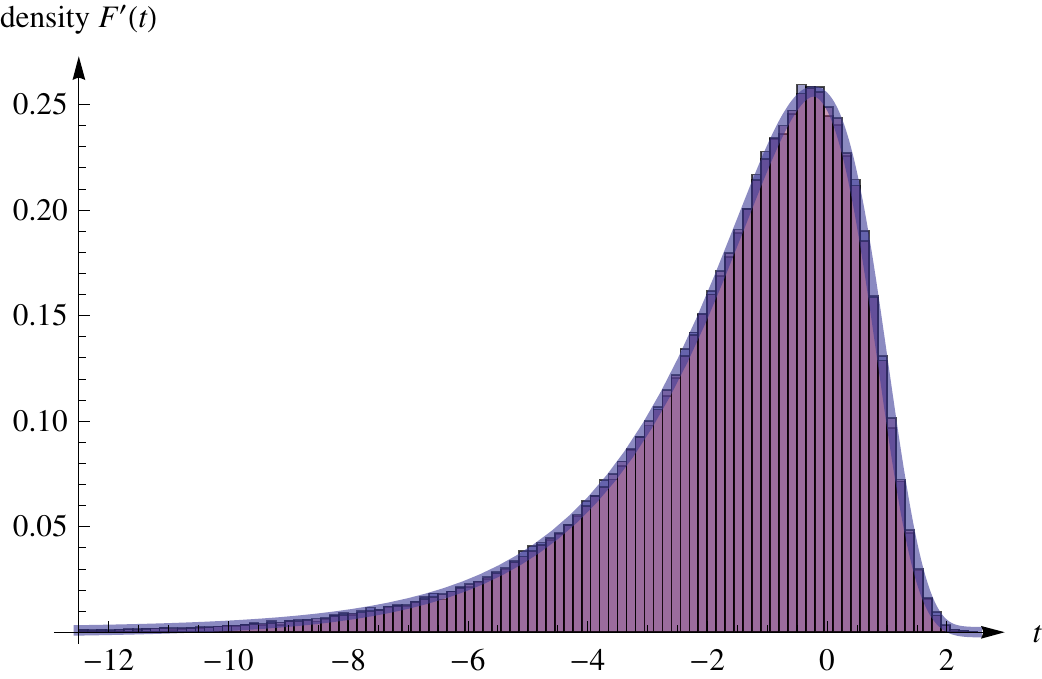}\\*[-1.5mm]
{\tiny b.\;\; histogram of $\log\log\kappa_\diamond(n)$}
\end{center}
\end{minipage}
\end{center}
\caption{Left: plot of the quasi-optimal condition number $\kappa_\diamond(n)$ ($1\leq n \leq 2600$); $f(z)=1/\Gamma(z)$. Within the shown range of $n$,
the maximum is taken for $n=2006$: $\kappa_\diamond(2006) \doteq 47\,067.2$. Note that there is not much to be gained from using the optimal radius $r_*(n)$ instead of $r_\diamond(n)$:
$\kappa_*(2006) \doteq 47\,063.9$. Right: plot of the density histograms of $t = \log\log\kappa_\diamond(n)$ ($1\leq n \leq N$) for $N=100\,000$ and $N=1\,000\,000$ and of the density $F'(t)$
belonging to the distribution $F(t) = \tfrac2\pi\arccos(\exp(-e^t))$, printed transparently on top of each other.
Since there is such a close agreement we are led to conjecture the limit law (\ref{eq:gammaconj})
 and, therefore, $\liminf_{n\to\infty} \kappa_\diamond(n) = 1$ and $\limsup_{n\to\infty} \kappa_\diamond(n)=\infty$.}\label{fig:gamma}
\end{figure}

The quasi-optimal radius $r_\diamond(n)$ can straightforwardly be obtained by means of the saddle-point equation (\ref{eq:zn_1}): that is,
$r_\diamond(n) = |z_n|$ where $z_n$ is one of the \emph{two} complex conjugate solutions of
\[
n= z \frac{d}{dz}\log \frac{1}{\Gamma(z)} = -z\psi(z);
\]
we choose $\Im\, z_n >0$ for definiteness.
Asymptotically, as $n\to\infty$, this saddle-point equation can actually be solved explicitly in terms of the
principal branch of the Lambert $W$-function: using the asymptotic expansion \cite[Eq.~(6.3.18)]{MR0167642} of the digamma function $\psi$ we obtain
\[
-z\psi(z) = -z\log z + \frac{1}{2} + O(z^{-1})\qquad (|\arg z| < \pi),
\]
and therefore, as $n \to \infty$,
\begin{equation}
z_n \sim \frac{\tfrac12 -n}{W(\tfrac12 -n)} = e^{W(\tfrac12 -n)} = r_n e^{i\theta_n},\qquad r_\diamond(n) \sim e^{\Re W(\tfrac12 -n)} = r_n,
\end{equation}
which we take as the definition of the radius $r_n$ and the angle $\pi/2 < \theta_n < \pi$.

A detailed quantitative analysis of $\kappa_\diamond(n)$ can now be based on the well-known fact \citeaffixed[p.~91]{MR0080749}{see} that the saddle-point
analysis of §\ref{sect:saddlepointmethod} is applicable to $f(z)=1/\Gamma(z)$: in fact the approximations (\ref{eq:ansaddle}) and (\ref{eq:Mnsaddle}) are
asymptotic equalities as $n\to\infty$. We find that
they can be recast  in the form
\begin{subequations}
\begin{align}
a_n &\sim \sqrt{\frac{2}{\pi n}}\, \frac{|1/\Gamma(r_n e^{i\theta_n})|}{r_n^n} \cos\phi_n,\\*[1mm]
\bar\kappa_\diamond(n)  &\sim \sqrt{\frac{\pi n}{2}} |\sec\phi_n|,\\*[3mm]
\kappa_\diamond(n) &\sim |\sec\phi_n|\label{eq:kappagamma}
\end{align}
\end{subequations}
with the collective phase approximation\footnote{\citeasnoun[p.~92]{MR0080749} basically states the same results with the much simpler phase approximation
\[
\phi_n = (n-\tfrac12)(\theta_n^{-1}\sin^2\theta_n -\theta_n),
\]
which is, however, numerically far less accurate for small values of $n$ and would not allow such a precise prediction of $\kappa_\diamond(n)$ as in Table~\ref{tab:gamma}.}
\[
\phi_n = \left(n-\frac12\right)\left(\frac{\sin^2\theta_n}{\theta_n}-\theta_n + \frac{\theta_n}{12(n-\frac12)^2}\right) - \frac12 \arccot\left(\cot\theta_n-\theta_n\csc^2\theta_n\right).
\]
The asymptotics (\ref{eq:kappagamma}) does not only explain the very \emph{possibility} of resonances, it actually
gives excellent \emph{numerical} predictions even for rather small values of $n$ such as those illustrated in Table~\ref{tab:gamma}.

\begin{table}[tbp]
\caption{The precision of the asymptotics (\ref{eq:kappagamma}) near some resonances.}
\vspace*{0mm}
\centerline{%
\setlength{\extrarowheight}{3pt}
{\small\begin{tabular}{rrrrrr}\hline
$n$ & $\kappa_\diamond(n)$ & $|\sec(\phi_n)|$ & $n$ & $\kappa_\diamond(n)$ & $|\sec(\phi_n)|$\\ \hline
$2002$ & $1.018$& $1.018$           &\qquad$10931$ &$1.006$       &$1.006$\\
$2003$ & $1.034$& $1.033$           &$10932$ &$1.124$       &$1.124$\\
$2004$ & $1.301$& $1.300$           &$10933$ &$1.498$       &$1.497$\\
$2005$ & $2.354$& $2.352$           &$10934$ &$2.798$       &$2.797$\\
$2006$ & $47067.162$& $42811.637$   &$10935$ &$138149.749$  &$143720.416$\\
$2007$ & $2.355$& $2.353$           &$10936$ &$2.798$       &$2.797$\\
$2008$ & $1.301$& $1.300$           &$10937$ &$1.498$       &$1.497$\\
$2009$ & $1.034$& $1.033$           &$10938$ &$1.124$       &$1.124$\\
$2010$ & $1.018$& $1.017$           &$10939$ &$1.006$       &$1.006$\\*[0.5mm]\hline
\end{tabular}}}
\label{tab:gamma}
\end{table}

Based on Table~\ref{tab:gamma} and Figure~\ref{fig:gamma}.a  it is certainly quite reasonable to \emph{conjecture} that $\liminf_{n\to\infty} \kappa_\diamond(n) = 1$.
On the other hand, by just looking at the rather randomly distributed positions $n$ of the resonances and the corresponding extreme values of $\kappa_\diamond(n)$
we could not really establish any serious conjecture about the probable value of $\limsup_{n\to\infty} \kappa_\diamond(n)$. Instead, we look at the \emph{statistics} of the values of $\kappa_\diamond(n)$ for
 $1 \leq n \leq N$. The very close agreement of the two histograms shown in Figure~\ref{fig:gamma}.b suggests that there should be a limit
 law of  the form
 \begin{subequations}\label{eq:gammaconj}
\begin{equation}\label{eq:gammaconj1}
\lim_{N\to\infty} N^{-1} \cdot \# \{ 1\leq n \leq N : \log\log\kappa_\diamond(n) \leq t\} = F(t).
\end{equation}
If the phases $\phi_n$ were equidistributed modulo $\pi$ (and the empirical data of the first one million instances strongly point into that direction)
we would immediately find from (\ref{eq:kappagamma}) that the distribution would be
\begin{equation}\label{eq:gammaconj2}
F(t) = \frac2\pi\arccos(e^{-e^t}).
\end{equation}
 \end{subequations}
In fact, we observe that the thus given density $F'(t)$ is very well approximated by the histograms in Figure~\ref{fig:gamma}.b and
we therefore conjecture that the limit law (\ref{eq:gammaconj}) is correct. Now, since $F'(t)>0$ for all $t\in \R$, this conjecture would also imply that
\[
\liminf_{n\to\infty} \kappa_\diamond(n) = 1,\qquad \limsup_{n\to\infty} \kappa_\diamond(n) = \infty.
\]
Actually, things are not as bad as such a spread of the condition number might suggest: from $\tfrac2\pi\arccos(1/100)\doteq 0.9936$ we infer that just about $0.64\%$ of all $n$
(in the sense of natural density) have $\kappa_\diamond(n) \geq 100$; that is, as much as at least $99.36\%$ of all the Taylor coefficients $a_n$ enjoy to be computed with a loss of less than
two digits. We find that the asymptotic median of $\kappa_\diamond(n)$ would be as small as $\sqrt{2}$.

\begin{remark} In the same vein, a worst-case analysis based on Figure~\ref{fig:gamma}.a tells us that there will be just a loss of at most \emph{three} digits in computing
the first one thousand of the Taylor coefficients of
\[
\frac{1}{\Gamma(z)} = \sum_{n=0}^\infty a_n z^n
\]
by means of a Cauchy integral with radius $r_\diamond(n)$. Note that the only competitor of this approach, namely using the recursion formula  \citeaffixed[§2.10]{MR0241700}{see}
\[
a_0=0,\quad a_1=1,\quad a_2 = \gamma,\quad a_n = n a_1 a_n - a_2 a_{n-1} + \sum_{k=2}^{n-1} (-1)^k \zeta(k)a_{n-k} \quad (n>2),
\]
is much worse behaved and suffers from severe numerical instability almost right from the beginning: in hardware arithmetic all the digits are lost for $n\geq 27$.
\end{remark}

\section{{\em H}-Admissible Entire Functions}\label{sect:hayman}

The function $f(z) = e^{e^z-1}$ of Example~\ref{ex:diamondbell} is not covered by our results so far: it has order $\rho=\infty$. Nevertheless, the general idea of using the saddle-point method (see §\ref{sect:saddle})
can certainly also be applied to functions that grow even stronger than $f$. \citeasnoun{MR0080749}
has axiomatized an important class of functions (with predominant growth in the direction of the real axis), for which the saddle-point method is applicable along {\em circular} contours and which enjoys nice closure properties.
Expositions of this method can be found in \citeasnoun[§II.79]{MR1016818}, \citeasnoun[§12.2]{MR1373678},
\citeasnoun[§5.4]{MR2172781}, and \citeasnoun[§VIII.5]{MR2483235}.

Hayman's method is based on the Taylor expansion (\ref{eq:taylor_angular_gen}) with the expansion point $z_*=r$, that is, on the Taylor expansion
\begin{subequations}\label{eq:ab}
\begin{equation}\label{eq:ab1}
\log f(r e^{i\theta}) = \log f(r) + i a(r) \theta - \frac{1}{2} b(r) \theta^2 + O(\theta^3)\qquad (\theta\to 0),
\end{equation}
where the coefficients are given by
\begin{equation}\label{eq:ab2}
a(r) = r \,\frac{f'(r)}{f(r)} = r \frac{d}{dr} \log f(r),\qquad b(r) = r a'(r).
\end{equation}
\end{subequations}
Now, an entire function $f(z)$ that is positive on $(r_0,\infty)$
for some $r_0>0$ is said to be {\em $H$-admissible}, if it satisfies the following three conditions:
\begin{itemize}
\item $b(r) \to \infty$ as $r\to \infty$;\\*[-3mm]
\item for some function $\theta_0(r)$ defined over $(r_0,\infty)$ and satisfying $0< \theta_0(r) < \pi$, one has,
uniformly in $|\theta| \leq \theta_0(r)$,
\[
f(r e^{i\theta}) \sim f(r) e^{i\theta a(r) - \theta^2 b(r)/2}\qquad (r\to \infty);
\]
\item uniformly in $\theta_0(r) \leq |\theta| \leq \pi$
\[
f(re^{i\theta}) = \frac{o(f(r))}{\sqrt{b(r)}}.
\]
\end{itemize}
However, one rarely checks these conditions directly but relies on the following closure properties instead.

\begin{theorem}[Hayman 1956]\label{thm:closure} Let $f$ and $g$ be $H$-admissible entire functions and
let $p$ be a polynomial with real coefficients. Then:
\begin{itemize}
\item[(a)] the product $f(z)g(z)$ and the exponential $e^{f(z)}$ are admissible;\\*[-3mm]
\item[(b)] the sum $f(z) + p(z)$ is admissible;\\*[-3mm]
\item[(c)] if the leading coefficient of $p$ is positive then
$f(z) p(z)$ and $p(f(z))$ are admissible;\\*[-3mm]
\item[(d)] if the Taylor coefficients of $e^{p(z)}$ are eventually positive then $e^{p(z)}$ is admissible.
\end{itemize}
\end{theorem}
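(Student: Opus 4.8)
The plan is to verify, for each construction in (a)--(d), the three defining conditions of $H$-admissibility directly, exploiting that the associated quantities $a$ and $b$ of (\ref{eq:ab2}) transform transparently. If $h = fg$ then $\log h = \log f + \log g$, so $a_h = a_f + a_g$ and $b_h = b_f + b_g$. If $h = e^f$ then $a_h = f\,a_f$ and, differentiating $a_f = rf'/f$ once more to obtain $r^2 f'' = f\,(b_f - a_f + a_f^2)$, one finds $b_h = f\,(a_f^2 + b_f)$. If $h = f + p$ with $p$ a polynomial of degree $d$, then $H$-admissibility forces $a_f(r) = r\,(\log f)'(r)\to\infty$, hence $\log f(r)$ outgrows every power of $r$ and $p(r) = o(f(r))$, $p'(r) = o(f'(r))$; from $\log h = \log f + \log(1 + p/f)$ one gets $a_h = a_f + o(1)$ and $b_h = b_f + o(1)$. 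If $h = fp$ with the leading coefficient of $p$ positive, then $a_p = rp'/p\to d$ and $b_p = ra_p'\to 0$, so $a_h = a_f + O(1)$ and $b_h = b_f + o(1)$. Finally $p(f)$ is assembled from the $d$-fold product $f^d$ (case (a)) via the additive perturbation $p(f) = c_d f^d + O(f^{d-1})$, and $e^p$ in (d) is treated directly from the explicit structure of $p$. In every case the first condition $b_h\to\infty$ is then immediate, using in addition $f(r)\to\infty$ for $e^f$ and $f^d$.

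For the second (major-arc) condition I would introduce the new gauge $\theta_0^h(r)$ as a suitable shrinking of the gauges supplied for the building blocks, and multiply (resp. add) the supplied expansions $f(re^{i\theta}) \sim f(r)\,e^{i\theta a_f - \theta^2 b_f/2}$, matching the resulting exponent against $i\theta a_h - \theta^2 b_h/2$ via the transformation formulae above; for the product and the sum-with-polynomial this is a brief computation once one checks the correction terms in $a_h, b_h$ are negligible at the scale $\sqrt{b_h}$. The delicate case is $h = e^f$: here one must upgrade the \emph{multiplicative} statement $f(re^{i\theta})/(f(r)\,e^{i\theta a_f - \theta^2 b_f/2})\to 1$ to an \emph{additive} control $f(re^{i\theta}) - (f(r) + i\theta a_h - \tfrac12\theta^2 b_h)\to 0$ before exponentiating, which forces $\theta_0^h$ to be chosen much smaller than $\theta_0^f$ and requires a version of $f$'s major-arc estimate carrying an explicit cubic-in-$\theta$ error term. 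This is where the bulk of the bookkeeping sits.

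For the third (minor-arc) condition the strategy is to split $\{\theta_0^h(r)\le|\theta|\le\pi\}$ into a \emph{buffer} region $\{\theta_0^h(r)\le|\theta|\le\theta_0^f(r)\}$ (likewise $\theta_0^g$), where the supplied major-arc Gaussian bound $|f(re^{i\theta})| \lesssim f(r)\,e^{-\theta^2 b_f(r)/2}$ still applies, and a \emph{deep} minor arc $\{\theta_0^f(r)\le|\theta|\le\pi\}$, where the supplied hypothesis $|f(re^{i\theta})| = o(f(r)/\sqrt{b_f(r)})$ applies. On the buffer, choosing $\theta_0^h$ with $(\theta_0^h)^2 b_f\gg\log b_h$ makes the Gaussian factor at $\theta=\theta_0^h$ already smaller than $1/\sqrt{b_h}$ (for $e^f$ one instead uses $\Re f(re^{i\theta}) \le f(r) - c\,\theta^2 b_f(r)$ on the buffer, together with $\log b_h = o(f(r))$). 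On the deep minor arc the companion factor is at most a bounded multiple of its value at $\theta=0$, by the elementary fact that $M(g;r)\sim g(r)$ for every $H$-admissible $g$ (a consequence of the first two conditions), while the honest $o(\,\cdot\,/\sqrt{b})$ decay (resp. $|f(re^{i\theta})| = o(f(r))$, which suffices for $e^f$) closes the estimate; combining the ranges gives $|h(re^{i\theta})| = o(h(r)/\sqrt{b_h(r)})$ uniformly in $\theta_0^h(r)\le|\theta|\le\pi$. The sign hypotheses enter precisely to ensure eventual positivity on the real axis: the positive leading coefficient of $p$ in (c) makes $fp$ and $p(f)$ eventually positive, and the eventual non-negativity of the Taylor coefficients of $e^{p}$ in (d) is what places $e^p$ in the $H$-admissible class at all (a bare polynomial is never $H$-admissible, and $e^{p}$ fails $H$-admissibility when, e.g., $p(z) = -z$); granting it, the estimates for $e^p$ follow by the same scheme with $\log M(e^p;r) = p(r)(1+o(1))$ read off directly from $p$.

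The main obstacle I expect is not conceptual but the uniformity bookkeeping: arranging the cascade of gauges $\theta_0^h \ll \theta_0^f,\theta_0^g$ so that \emph{simultaneously} the major-arc expansion survives multiplication/composition, the cubic and higher errors stay negligible after the exponentiation in the $e^f$ case, and the buffer estimate dominates $1/\sqrt{b_h}$; and tracking that the perturbative corrections to $a_h$ and $b_h$ are genuinely $o(\sqrt{b_h})$. Everything else is routine once the preliminary lemmas---$a_f\to\infty$, $a_f$ eventually increasing, $f(r)\to\infty$ faster than any polynomial, $M(f;r)\sim f(r)$, and the existence of a gauge with $b_f\theta_0^2\to\infty$ and $b_f\theta_0^3\to 0$---are in place.
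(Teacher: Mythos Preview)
The paper does not prove this theorem. It is stated with attribution ``Hayman 1956'' and then used as a black box; the proof is in Hayman's original paper, to which the reader is referred. So there is no ``paper's own proof'' to compare against. What you have sketched is essentially the strategy of Hayman's original argument: track how $a$ and $b$ transform under each operation, choose a new gauge $\theta_0^h$ nested inside the given ones, handle the buffer zone $\theta_0^h\le|\theta|\le\theta_0^f$ with the major-arc Gaussian and the deep minor arc with the given $o$-bound. Your identification of the $e^f$ case as the one requiring the most care (upgrading a multiplicative asymptotic to an additive one before exponentiating) is correct.

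One slip to flag: from $a_f(r)=r(\log f)'(r)\to\infty$ you conclude ``$\log f(r)$ outgrows every power of $r$''. That is false (take $f=e^z$, so $\log f(r)=r$). What actually follows is that $\log f(r)$ outgrows every multiple of $\log r$, i.e., $f(r)$ outgrows every polynomial; this is exactly what you need for $p(r)=o(f(r))$, so the conclusion survives, but the intermediate claim should be corrected.
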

For instance, with the help of this theorem it is fairly obvious to see that the functions $f(z)=e^z$ and $f(z)=e^{e^z-1}$ are both $H$-admissible. On the
other hand, the $H$-admissibility of functions like $f(z)=z^{-k/2}I_k(2\sqrt{z})$ has to be inferred more labor-intensive from the definition.

From the definition of $H$-admissibility we immediately read off that the maximum modulus function is given, for $r$ large enough, by
\begin{equation}\label{eq:MHayman}
M(r) = f(r),
\end{equation}
which, by the strict convexity of $\log M(r)$ with respect to $\log r$,\footnote{Note that this strict convexity implies $b(r) = (r \frac{d}{dr})^2 \log f(r) >0$ for all $r>r_0$.} by Theorems~\ref{thm:rdiamondgrowth} and
\ref{thm:zn}, implies that the quasi-optimal radius $r_\diamond = r_\diamond(n)$ is the \emph{unique}
solution of
\begin{equation}\label{eq:rnfroma}
a(r_\diamond) = n
\end{equation}
for $n$ large enough. Hayman's main results are summarized in the following theorem.

\begin{theorem}[Hayman 1956]\label{thm:HaymanMain}
Let $f$ be an entire $H$-admissible function. Then, for the quasi-optimal radius $r_\diamond=r_\diamond(n)$, we have\footnote{Note that
(\ref{eq:Hayman_an}) can be thought of as being a generalization of Stirling's formula,
cf. Examples~\ref{ex:exp} and \ref{ex:diamondexp}: this was the original headline of \possessivecite{MR0080749} work.}
\begin{equation}\label{eq:Hayman_an}
a_n \sim r_\diamond^{-n} M_1(r_\diamond) \sim\frac{f(r_\diamond)}{r_\diamond^n \sqrt{2\pi b(r_\diamond)}} \qquad (n \to \infty);
\end{equation}
in particular, we get $a_n > 0$ for $n$ large enough. Moreover, we have, uniformly in the integers $n$,\footnote{Because of $f(r)=\sum_{k=0}^\infty a_k r^k$, the quantities $a_n r^n/f(r)$ form, if
$a_n\geq0$ for all $n$, a probability distribution in the discrete variable $n$. The result (\ref{eq:boltzmann}) thus tells us that this probability
distribution is asymptotically, in the limit of large radius $r\to\infty$, Gaussian with mean $a(r)$ and variance~$b(r)$.}
\begin{equation}\label{eq:boltzmann}
\frac{a_n r^n}{f(r)} = \frac{1}{\sqrt{2\pi b(r)}}\left( \exp\left(-\frac{(n-a(r))^2}{2b(r)}\right) + o(1) \right)\qquad (r \to \infty).
\end{equation}
Finally,  the ratio $a_n/a_{n+1}$ forms an eventually increasing sequence since\footnote{\label{ft:divakar}Note that the asymptotic representation (\ref{eq:divakar}) of the quasi-optimal radius holds for the generalized hyperbolic
functions (\ref{eq:pFq}) with $p\leq q$, too: namely, we have by Theorem~\ref{thm:rdiamond}, Example~\ref{ex:hyper} and Remark~\ref{rem:hyper} that
\begin{equation}\label{eq:divakar2}
r_\diamond(n) \sim n^{q+1-p} \sim \left|\frac{a_n}{a_{n+1}}\right| \sim \left|\frac{a_{n-1}}{a_n}\right| \qquad (n\to\infty).
\end{equation}
On the other hand, such a representation is not valid for the function of Example~\ref{ex:detmock}. However, there the following corollary of (\ref{eq:divakar2}) is nevertheless correct:
\begin{equation}\label{eq:divakar3}
r_\diamond(n) \sim \sqrt{\left|\frac{a_{n-1}}{a_{n+1}}\right|} \qquad (n\to\infty).
\end{equation}
Hence, if we restrict ourselves to those $n$ for which $a_{n-1},a_{n+1}\neq 0$, we observe that (\ref{eq:divakar3}) does in fact hold for all the functions of Table~\ref{tab:functions}
but the function $1/\Gamma(z)$. Whether this fact is just
a contingency or whether it is for some deeper structural reason, we do not yet know.
}
\begin{equation}\label{eq:divakar}
r_\diamond(n) \sim \frac{a_n}{a_{n+1}} \sim \frac{a_{n-1}}{a_n} \qquad (n\to\infty).
\end{equation}
\end{theorem}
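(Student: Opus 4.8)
The plan is to establish all three assertions by the saddle-point method applied to the Cauchy integral
\[
r^n a_n = \frac{1}{2\pi}\int_{-\pi}^{\pi} e^{-in\theta} f(re^{i\theta})\,d\theta,
\]
keeping the radius $r$ as a free parameter and specializing to $r=r_\diamond(n)$ only at the end. I would first prove the uniform local limit estimate (\ref{eq:boltzmann}). Split the contour at the angles $\pm\theta_0(r)$ provided by the definition of $H$-admissibility. On the central arc $|\theta|\leq\theta_0(r)$ insert the Gaussian approximation $f(re^{i\theta})\sim f(r)e^{i\theta a(r)-\theta^2 b(r)/2}$ from (\ref{eq:ab}), which turns the integral into $\tfrac{f(r)}{2\pi}\int_{-\theta_0(r)}^{\theta_0(r)} e^{i(a(r)-n)\theta-b(r)\theta^2/2}\,d\theta$ plus error terms. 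Since $b(r)\to\infty$, the rescaling $\phi=\theta\sqrt{b(r)}$ concentrates the mass near the origin, so I ``trade tails'': extend the integration to $\R$ at the cost of a contribution exponentially small in $\theta_0(r)^2 b(r)$, and evaluate the full Gaussian integral to $\sqrt{2\pi/b(r)}\,e^{-(n-a(r))^2/(2b(r))}$. The outer arc $\theta_0(r)\leq|\theta|\leq\pi$ contributes, by the third defining property of $H$-admissibility, a quantity that is $o(f(r)/\sqrt{b(r)})$ with a bound not depending on $n$. Collecting terms yields (\ref{eq:boltzmann}); the point is that every error is $o(f(r)/\sqrt{b(r)})$ rather than merely small relative to the main term, which is why the $o(1)$ in (\ref{eq:boltzmann}) sits additively inside the bracket and the estimate is uniform over all integers $n$.

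Next, for (\ref{eq:Hayman_an}) I would set $r=r_\diamond(n)$. For $n$ large, (\ref{eq:MHayman}) gives $M(r_\diamond)=f(r_\diamond)$ and (\ref{eq:rnfroma}) gives $a(r_\diamond)=n$, so the Gaussian exponent in (\ref{eq:boltzmann}) vanishes and $r_\diamond^{\,n}a_n\sim f(r_\diamond)/\sqrt{2\pi b(r_\diamond)}$; in particular $a_n>0$ eventually, since $b(r_\diamond)>0$ by strict log-convexity of $M$ in $\log r$. The remaining equality $a_n\sim r_\diamond^{-n}M_1(r_\diamond)$ follows by repeating the Laplace estimate for $M_1(r_\diamond)=\tfrac1{2\pi}\int_{-\pi}^{\pi}|f(r_\diamond e^{i\theta})|\,d\theta$: on the central arc $|f(r_\diamond e^{i\theta})|\sim f(r_\diamond)e^{-b(r_\diamond)\theta^2/2}$ (here $b(r_\diamond)$ is real and positive because $f$ is positive on $(r_0,\infty)$), trading tails gives $M_1(r_\diamond)\sim f(r_\diamond)/\sqrt{2\pi b(r_\diamond)}$, which matches the value just found for $r_\diamond^{\,n}a_n$.

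For (\ref{eq:divakar}) I would apply the uniform estimate (\ref{eq:boltzmann}) at the single radius $r=r_\diamond(n)$ but for the neighbouring indices $n-1$ and $n+1$. Since $a(r_\diamond(n))=n$, the corresponding Gaussian exponents equal $1/(2b(r_\diamond(n)))\to 0$, whence
\[
a_{n-1}\sim\frac{f(r_\diamond(n))}{r_\diamond(n)^{\,n-1}\sqrt{2\pi b(r_\diamond(n))}},\qquad
a_{n+1}\sim\frac{f(r_\diamond(n))}{r_\diamond(n)^{\,n+1}\sqrt{2\pi b(r_\diamond(n))}},
\]
while $a_n$ has the value of (\ref{eq:Hayman_an}); dividing gives $a_{n-1}/a_n\sim r_\diamond(n)\sim a_n/a_{n+1}$. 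That $a_n/a_{n+1}$ is eventually increasing then follows from its being asymptotic to the increasing sequence $r_\diamond(n)$ of Theorem~\ref{thm:rdiamondgrowth}, together with the Wiman--Valiron characterization (cf.\ Remark~\ref{rem:hyper}) that for $H$-admissible $f$ every sufficiently large index is the central index of some radius.

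The main obstacle is the uniformity in $n$ of the saddle-point estimate (\ref{eq:boltzmann}): one must show that on the central arc the error of the Gaussian approximation of $f(re^{i\theta})$, and the tail-trading error, are both $o(f(r)/\sqrt{b(r)})$ with constants independent of $n$ --- precisely what the three axioms of $H$-admissibility are tailored to supply, but which takes some care to extract. Everything else (the specialization to $r_\diamond(n)$, the elementary Gaussian integrations, and the division of asymptotics yielding (\ref{eq:divakar})) is routine once (\ref{eq:boltzmann}) is available.
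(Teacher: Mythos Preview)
The paper does not supply a proof of this theorem; it is stated with attribution to Hayman (1956) and then used. So there is no ``paper's own proof'' to compare against. Your sketch is, in fact, a faithful outline of Hayman's original argument: split the Cauchy integral at $\pm\theta_0(r)$, use the second axiom on the central arc to reduce to a Gaussian, use the third axiom to bound the outer arc by $o(f(r)/\sqrt{b(r)})$ uniformly in~$n$, and trade tails. The specialization to $r=r_\diamond(n)$ via $a(r_\diamond)=n$ and the parallel Laplace estimate for $M_1(r_\diamond)$ are exactly right.

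One point deserves a cleaner statement. You write that ``$a_n/a_{n+1}$ is eventually increasing then follows from its being asymptotic to the increasing sequence $r_\diamond(n)$\ldots''. Asymptotic equality with a monotone sequence does not by itself yield eventual monotonicity (think of $n+(-1)^n\sim n$). The real argument is the second ingredient you mention: once every sufficiently large $n$ is the central index at some radius $\bar r_n$ (which does follow from~(\ref{eq:boltzmann}), since the Gaussian profile peaks at $k=n$ when $r=r_\diamond(n)$), maximality of the term $a_n\bar r_n^{\,n}$ gives $a_{n-1}/a_n\le \bar r_n\le a_n/a_{n+1}$, hence $a_{n-1}/a_n\le a_n/a_{n+1}$ directly. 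Drop the misleading first clause and keep this Wiman--Valiron step as the actual reason.
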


As for the conditions numbers, we straightforwardly get the following corollary; for reasons of a better comparison
we have also included the quantities of the Wiman--Valiron theory as introduced in §\ref{sect:WimanValiron} (their
asymptotics can directly be read off from (\ref{eq:boltzmann})).

\begin{cor}\label{cor:admissible} Let $f$ be an entire $H$-admissible function. Then
\[
\lim_{n\to\infty}\kappa_\diamond(n) = 1,\qquad \lim_{n\to\infty}\frac{\bar\kappa_\diamond(n)}{\sqrt{2\pi b(r_\diamond(n))}}= 1.
\]
Moreover we have $\nu(r) \sim a(r)$ as $r \to \infty$ and
\[
M(r_\diamond(n)) \sim \sqrt{2\pi b(r_\diamond(n))} \, \mu(r_\diamond(n))\qquad (n\to \infty).
\]
\end{cor}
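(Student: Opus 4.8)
The plan is to obtain all four assertions by substituting the quasi-optimal radius $r=r_\diamond(n)$ into the asymptotic formulae of Theorem~\ref{thm:HaymanMain}, using two facts valid for $n$ large enough: that $r_\diamond(n)\to\infty$ (Theorem~\ref{thm:rdiamondgrowth}), hence $M(r_\diamond(n))=f(r_\diamond(n))$ by (\ref{eq:MHayman}); and that $a(r_\diamond(n))=n$ by (\ref{eq:rnfroma}). For the two limits in the first line I would first record, using (\ref{eq:kappa2}), the identity $\bar\kappa(n,r)=M(r)/(|a_n|r^n)$, and the positivity $a_n>0$ from Theorem~\ref{thm:HaymanMain}, that
\[
\kappa_\diamond(n)=\frac{M_1(r_\diamond)}{a_n\,r_\diamond^{\,n}},\qquad
\bar\kappa_\diamond(n)=\frac{M(r_\diamond)}{a_n\,r_\diamond^{\,n}}=\frac{f(r_\diamond)}{a_n\,r_\diamond^{\,n}}\qquad(r_\diamond=r_\diamond(n)).
\]
The chain of asymptotic equalities (\ref{eq:Hayman_an}), namely $a_n r_\diamond^{\,n}\sim M_1(r_\diamond)\sim f(r_\diamond)/\sqrt{2\pi b(r_\diamond)}$, then gives at once $\kappa_\diamond(n)\to1$ and $\bar\kappa_\diamond(n)\big/\sqrt{2\pi b(r_\diamond(n))}\to1$.

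For $\nu(r)\sim a(r)$ I would argue from (\ref{eq:boltzmann}), which says that $a_n r^n/f(r)$ equals, uniformly in the integers $n$, the Gaussian weight of mean $a(r)$ and variance $b(r)$ up to an additive error of size $o\big(1/\sqrt{b(r)}\big)$. Evaluating at the integer nearest to $a(r)$ and using $b(r)\to\infty$ shows that the maximum term $\mu(r)=\max_n a_n r^n$ of (\ref{eq:maxterm}) satisfies $\mu(r)\sim f(r)/\sqrt{2\pi b(r)}$, and that every maximizing index $\nu(r)$ of (\ref{eq:centindex}) must obey $\exp\!\big(-(\nu(r)-a(r))^2/(2b(r))\big)\to1$, i.e. $\nu(r)=a(r)+o\big(\sqrt{b(r)}\big)$. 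Combined with the standard property $\sqrt{b(r)}=o(a(r))$ of $H$-admissible functions (a reflection of the fact that $\big(a_n r^n/f(r)\big)_{n\ge0}$ is a probability distribution on the non-negative integers which, by (\ref{eq:boltzmann}), concentrates in a window of width $O(\sqrt{b(r)})$ about its mean $a(r)$), this upgrades to $\nu(r)\sim a(r)$.

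The last assertion is then a rearrangement of what was already shown: the estimate $\mu(r)\sim f(r)/\sqrt{2\pi b(r)}$ holds for all large $r$, in particular at $r=r_\diamond(n)$, where $M(r_\diamond(n))=f(r_\diamond(n))$ for $n$ large; hence $M(r_\diamond(n))\sim\sqrt{2\pi b(r_\diamond(n))}\,\mu(r_\diamond(n))$. Equivalently, one may sandwich $a_n r_\diamond^{\,n}\le\mu(r_\diamond)\le(1+o(1))\,f(r_\diamond)/\sqrt{2\pi b(r_\diamond)}$ and invoke (\ref{eq:Hayman_an}), the point being that $a(r_\diamond(n))=n$ is exactly what makes the $n$-th term the maximum term at radius $r_\diamond(n)$; finally one uses $\nu(r)\sim a(r)$ to identify $\nu(r)\sim a(r)$ near the relevant radii.

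Everything here is routine bookkeeping once Theorem~\ref{thm:HaymanMain} is available; the single point that needs a little care is the justification of $\sqrt{b(r)}=o(a(r))$, which is what converts the mode estimate $\nu(r)=a(r)+o(\sqrt{b(r)})$ into the clean asymptotic $\nu(r)\sim a(r)$, together with the honest use of the uniformity in $n$ asserted in (\ref{eq:boltzmann}) when passing from the pointwise Gaussian approximation to the global statement $\mu(r)\sim f(r)/\sqrt{2\pi b(r)}$. I expect this to be the main (and only mild) obstacle; the remainder is immediate from Hayman's estimates.
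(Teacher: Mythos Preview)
Your proposal is correct and follows exactly the approach the paper indicates: the paper itself gives no detailed proof, stating only that the corollary is obtained ``straightforwardly'' from Theorem~\ref{thm:HaymanMain} and that the Wiman--Valiron asymptotics ``can directly be read off from~(\ref{eq:boltzmann}).'' You have in fact supplied more detail than the paper does, including the honest identification of the one mildly nontrivial ingredient $\sqrt{b(r)}=o(a(r))$ (which is indeed a standard lemma in Hayman's original work).
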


\smallskip

Applications have already been discussed in Examples~\ref{ex:diamondexp} and
\ref{ex:diamondbell}.

\begin{remark} If the entire $H$-admissible function $f$ is of finite order $\rho$ with normal type
$\tau$, it is instructive to compare Corollary~\ref{cor:admissible} with Theorem~\ref{thm:condcrg}. From the
definition of $H$-admissibility it then follows that:
\begin{itemize}
\item $f$ is of perfectly and of completely regular growth;\\*[-3mm]
\item there is just one direction of predominant growth, $\Omega=1$ with $h(0)=\tau$;\\*[-3mm]
\item $f$ has at most finitely many zeros in the vicinity the positive real axis.
\end{itemize}
Thus, $f$ satisfies the assumptions of Theorem~\ref{thm:condcrg} and also those that have led to the
definition (\ref{eq:omega}) of $\omega$.
Therefore, by $\Omega=1$ we get from (\ref{eq:barkappaomega}) and (\ref{eq:kappaOmega}) that $\omega=1$ and
\[
\lim_{n\to\infty} \kappa_\diamond(n) =1,\qquad \lim_{n\to\infty}\frac{\bar\kappa_\diamond(n)}{\sqrt{2\pi \rho n}} = 1.
\]
(Recall that $H$-admissible functions have $a_n>0$ for $n$ large enough.) Further, by Theorem~\ref{thm:mu1} we have
$\nu(r) \sim \tau \rho r^{\rho}$. These results are consistent with Corollary~\ref{cor:admissible}; a comparison
gives, by using (\ref{eq:rdiamondasympt}), the asymptotic equations
\begin{equation}\label{eq:abrhotau}
a(r) \sim \tau \rho r^{\rho},\qquad b(r) \sim \tau \rho^2 r^\rho\qquad (r\to\infty).
\end{equation}
Formally, as suggested by (\ref{eq:ab2}), these equations could have been obtained from differentiating the
asymptotic equation $\log f(r) = \log M(r) \sim \tau r^\rho$ (which just states the perfectly regular
growth of the function $f$, see (\ref{eq:prg}) for the definition). The differentiability of these
asymptotic equations has also been observed by \citeasnoun[IV.70/71]{MR0170986} under the weaker assumption that $f$ is a function of perfectly regular growth with $a_n \geq 0$.
\end{remark}

\section{Entire Functions with Non-Negative Taylor Coefficients}\label{sect:positive}

In this final section we consider entire transcendental functions $f$ which have non-negative Taylor coefficients: $a_n \geq 0$ for all $n$. Such functions are typically met as generating
functions in combinatorial enumeration or in probability theory. The non-negativity of the Taylor coefficients implies at once that
\begin{equation}\label{eq:Mnonneg}
M(r) = f(r) \qquad (r>0).
\end{equation}
Thus, by Theorem~\ref{thm:threecircle}, we infer that $\log f(r)$ and hence $\log(r^{-n} f(r))$ are strictly convex functions of $ \log r$. Moreover,
since
\[
\frac{d^2}{dr^2} r^{-n} f(r) = r^{-n-2} \sum_{k=0}^\infty (n+1-k)(n-k) a_k r^k > 0 \qquad (r>0),
\]
we conclude that the function $r^{-n} f(r)$ itself is strictly convex, too. The same reasoning that led to (\ref{eq:rnfroma}) in the last section proves the following simplification of Theorem~\ref{thm:zn}.

\begin{theorem}\label{thm:nonneg}
Let $f$ be an entire transcendental function with non-negative Taylor coefficients: $a_n \geq 0$ for all $n$. Then, the quasi-optimal radius $r_\diamond = r_\diamond(n)$ is given as the unique
 solution of the convex optimization problem
\begin{equation}\label{eq:rdiamond_min_nonneg}
r_\diamond = \argmin_{r>0} r^{-n} f(r),
\end{equation}
and, equivalently, as the unique solution of the real saddle-point equation
\begin{equation}\label{eq:rdiamond_nonneg}
r_\diamond \frac{f'(r_\diamond)}{f(r_\diamond)} = n\qquad (r_\diamond >0).
\end{equation}
\end{theorem}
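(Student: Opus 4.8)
The plan is to read everything off the convexity facts recorded immediately before the statement, in exactly the way the equation (\ref{eq:rnfroma}) was obtained in \S\ref{sect:upper}. Since $a_k\geq 0$ for all $k$, Eq.~(\ref{eq:Mnonneg}) gives $M(r)=f(r)$ for every $r>0$, so the defining relation (\ref{eq:rdiamond}) of the quasi-optimal radius becomes $r_\diamond(n)=\argmin_{r>0}r^{-n}f(r)$, which is (\ref{eq:rdiamond_min_nonneg}); it then remains to show that this minimizer exists, is unique, and is characterized by the first-order condition (\ref{eq:rdiamond_nonneg}).

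For existence and uniqueness I would work with $g(r):=r^{-n}f(r)$ on $(0,\infty)$. By Theorem~\ref{thm:barcond}(c) (whose qualitative conclusions hold verbatim for $r^{-n}M(r)$ in place of $\bar\kappa$, even when $a_n=0$), $g$ is coercive: $g(r)\to\infty$ as $r\to 0^+$ and as $r\to\infty$, for $n>n_0$. For strict convexity I would use the identity displayed before the theorem,
\[
g''(r)=r^{-n-2}\sum_{k=0}^\infty (n+1-k)(n-k)\,a_k\,r^k .
\]
Each factor $(n+1-k)(n-k)$ is $\geq 0$, vanishing only for $k\in\{n,n+1\}$, so $g''(r)\geq 0$; and since $f$ is entire transcendental it has infinitely many indices $k\geq n+2$ with $a_k>0$, for each of which $(n+1-k)(n-k)>0$, whence $g''(r)>0$ for all $r>0$. (Alternatively, strict convexity of $\log g$ in $\log r$ is immediate from Theorem~\ref{thm:threecircle}(b).) A strictly convex coercive function on $(0,\infty)$ attains its minimum at a unique interior point $r_\diamond$.

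The first-order condition at that point yields (\ref{eq:rdiamond_nonneg}): differentiating,
\[
g'(r)=-n\,r^{-n-1}f(r)+r^{-n}f'(r)=r^{-n-1}\bigl(r f'(r)-n f(r)\bigr),
\]
so $g'(r_\diamond)=0$ forces $r_\diamond f'(r_\diamond)-n f(r_\diamond)=0$; and since $f(r)\geq a_{n_0}r^{n_0}>0$ for $r>0$ we may divide by $f(r_\diamond)$ to get $r_\diamond f'(r_\diamond)/f(r_\diamond)=n$. Conversely, any $r>0$ with $r f'(r)/f(r)=n$ is a critical point of the strictly convex $g$, hence equals $r_\diamond$; so the real saddle-point equation (\ref{eq:rdiamond_nonneg}) has exactly one solution, namely $r_\diamond(n)$. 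One may equivalently phrase this as the specialization of Theorem~\ref{thm:zn}: the inequality $|f(r e^{i\theta})|\leq f(r)=M(r)$, sharp at $\theta=0$, forces the saddle point $z_n$ with $|f(z_n)|=M(|z_n|)$ to lie on the positive real axis, that is, $z_n=r_\diamond$, and the complex equation (\ref{eq:zn_1}) then collapses to (\ref{eq:rdiamond_nonneg}).

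There is no real obstacle here: the entire analytic content is Theorem~\ref{thm:threecircle} together with the two-line computation of $g''$, both already available. The only places asking for a little care are the bookkeeping when $a_n=0$ (work throughout with $r^{-n}M(r)$, not with $\kappa$ or $\bar\kappa$) and the appeal to transcendence to promote $g''\geq 0$ to $g''>0$; the remainder is the textbook fact that a strictly convex coercive function on an open interval has a unique minimizer, pinned down by a vanishing derivative.
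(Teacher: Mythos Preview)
Your proof is correct and follows essentially the same route as the paper: use $M(r)=f(r)$ to reduce (\ref{eq:rdiamond}) to (\ref{eq:rdiamond_min_nonneg}), invoke coercivity and the strict convexity recorded just before the theorem to obtain a unique minimizer, and read off (\ref{eq:rdiamond_nonneg}) as the first-order condition. The paper's own argument is simply the one-line remark that the reasoning leading to (\ref{eq:rnfroma}) carries over; your write-up makes that reasoning explicit and, in particular, spells out why the displayed inequality $g''(r)>0$ is strict (via transcendence), which the paper leaves to the reader.
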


\begin{remark}\label{rem:rdiamond} If $f$ is a function of perfectly regular growth (of order $\rho$ and type~$\tau$) with non-negative Taylor coefficients, Theorem~\ref{thm:nonneg} yields the assertion of Theorem~\ref{thm:rdiamond} with a proof that is much shorter than the one of the general result given there:
first, from the definition of perfectly regular growth in (\ref{eq:prg}) and from (\ref{eq:Mnonneg}) we get
\[
\log f(r) \sim \tau r^{\rho}\qquad (r\to \infty);
\]
next, since the Taylor coefficients are non-negative, we may differentiate this asymptotic equation \citeaffixed[IV.70]{MR0170986}{see} and obtain
\[
r\, \frac{f'(r)}{f(r)} \sim \tau \rho  r^{\rho}\qquad (r\to \infty);
\]
therefore, by recalling $r_\diamond(n)\to\infty$ as $n\to \infty$ (see Theorem~\ref{thm:rn}), the saddle-point equation (\ref{eq:rdiamond_nonneg}) is asymptotically solved by
\[
r_\diamond(n) \sim \left(\frac{n}{\tau\rho}\right)^{1/\rho}\qquad (n\to\infty),
\]
which is, finally, the assertion of Theorem~\ref{thm:rdiamond}.
\end{remark}

\begin{example}\label{ex:rmt2} We pick up the computation of the gap probabilities $E_2(n;s)$ as discussed in Example~\ref{ex:rmt1}, this time striving for small relative errors instead of absolute errors.
As we have seen, the generating function is given by the Fredholm determinant
\[
f(z) = \sum_{k=0}^\infty E_2(k;s) \, z^k = \det\left(I-(1-z) K\projected{L^2(0,s)}\right),\quad K(x,y) = \sinc(\pi(x-y)),
\]
which is known to be, as a function of $z$, an entire function of order $\rho=0$.\footnote{Generally, if the kernel $K(x,y)$ satisfies a Hölder condition with exponent $\alpha$, with respect to
either $x$ or $y$, then $f(z) = \det(I-z K)$ is an entire function of order $\rho \leq 2/(1+2\alpha)$; see, e.g., \citeasnoun[Lemma~24.10]{MR1892228}.}
Figure~\ref{fig:rmt2}.a shows that, for $n=10$, the quasi-optimal radius $r_\diamond$ (as computed from~(\ref{eq:rdiamond_min_nonneg}) by means of Matlab's {\tt fminbnd} command) varies over about 20 orders of magnitude as
the parameter $s$ runs through the interval $2\leq s \leq 18$. The corresponding
condition number satisfies $\kappa_\diamond \doteq 1$, up to machine precision throughout. We will explain this optimal condition number result and the strong variability of the radius by
discussing a ``mock-up'' model in the
next example. Note that even though Figure~\ref{fig:rmt2}.b shows a significant accuracy improvement in the tails, we do not get the full accuracy that we would have expected from $\kappa_\diamond\doteq 1$. The reason
is simply that the numerical evaluation of the Fredholm determinants does not satisfy the model assumption~(\ref{eq:relmodel}); see \citeasnoun[§4]{Bornemann}.
\end{example}

\begin{figure}[tbp]
\begin{center}
\begin{minipage}{0.495\textwidth}
\begin{center}
\includegraphics[width=\textwidth]{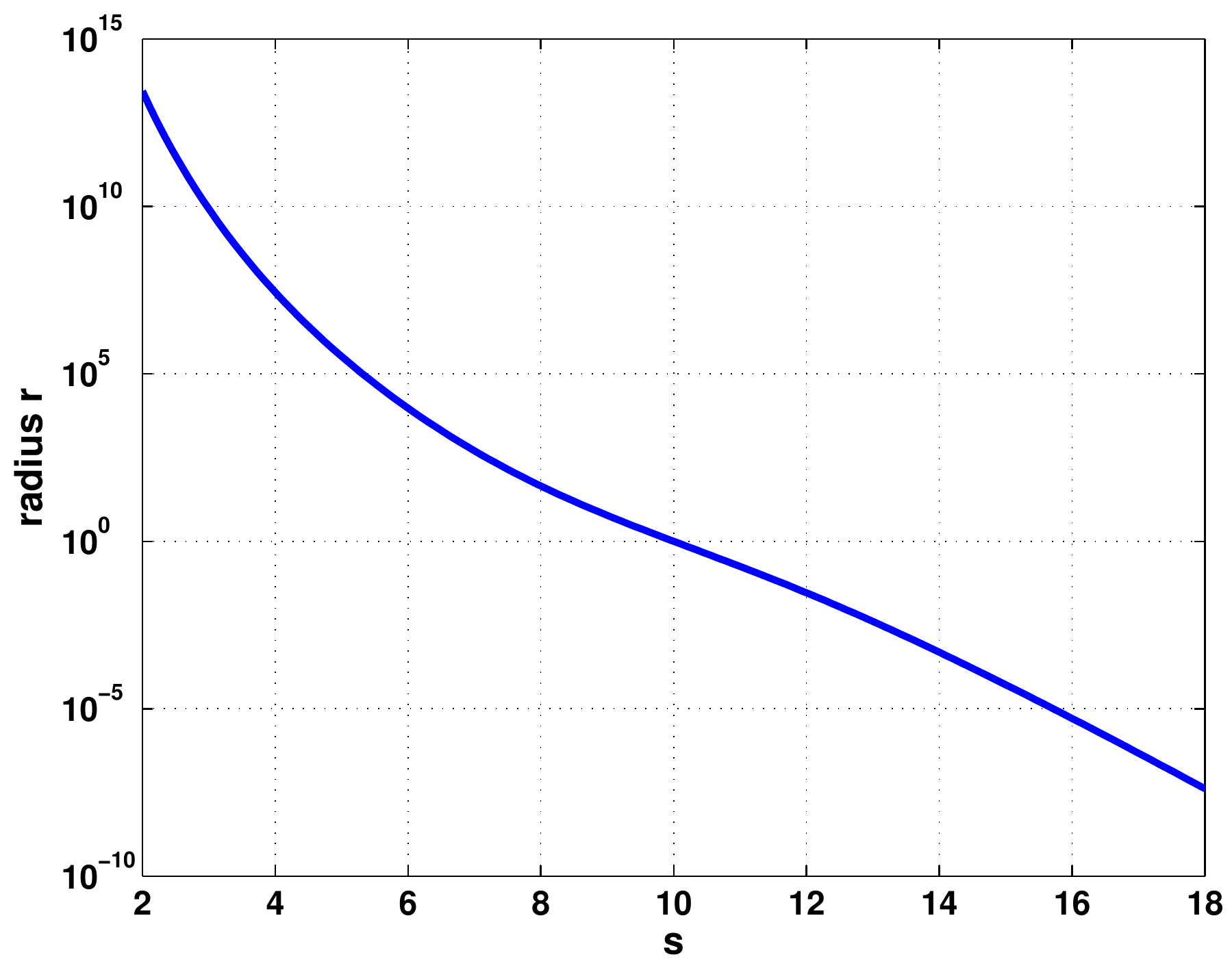}\\*[-1.5mm]
{\tiny a.\;\; quasi-optimal radius $r = r_\diamond$ as a function of $s$}
\end{center}
\end{minipage}
\hfill
\begin{minipage}{0.495\textwidth}
\begin{center}
\includegraphics[width=\textwidth]{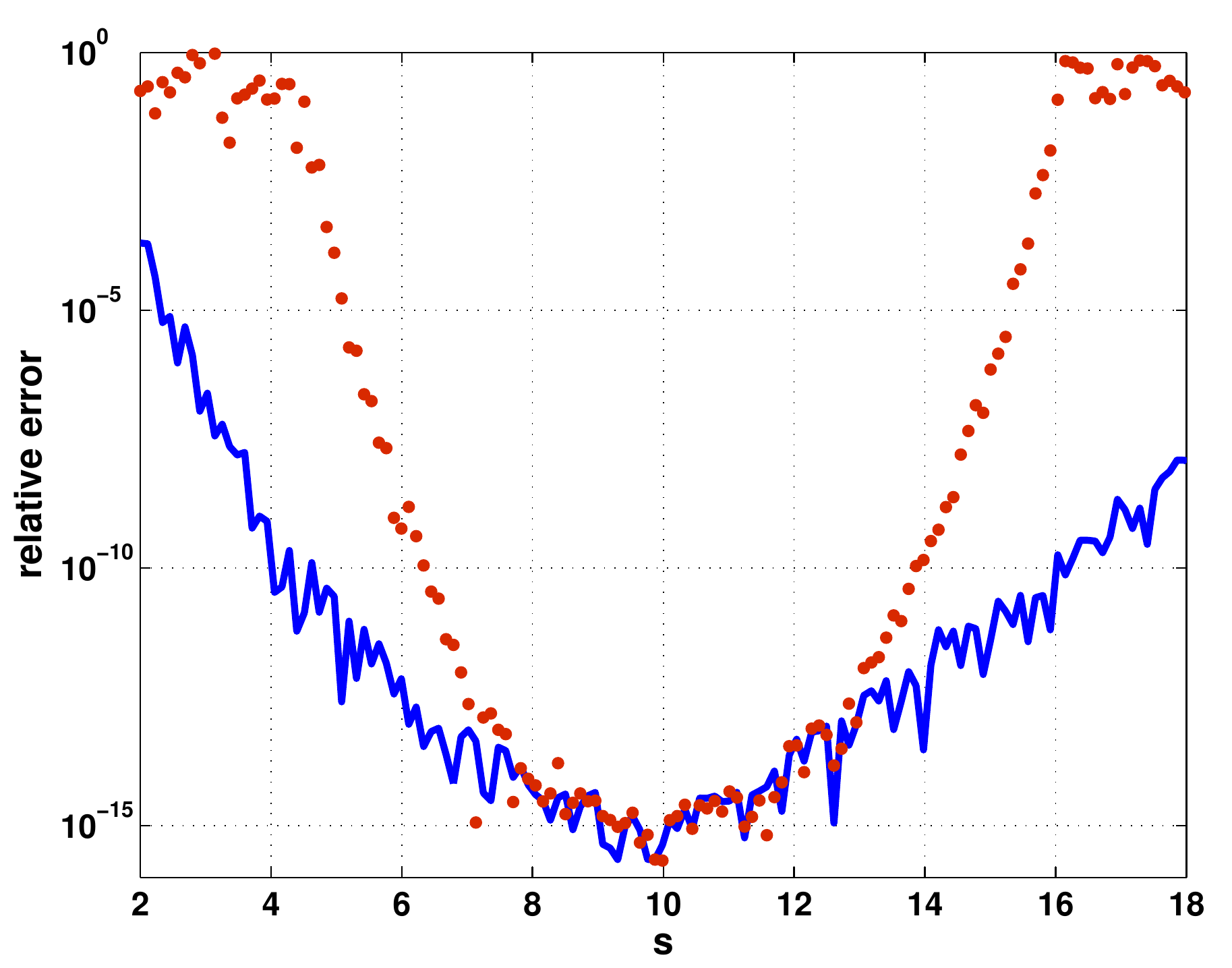}\\*[-1.5mm]
{\tiny b.\;\; relative error}
\end{center}
\end{minipage}
\end{center}
\caption{Left: the quasi-optimal radius $r_\diamond$ as a function of $s$ for calculating the gap probability $E_2(10;s)$ of GUE as the $10$-th Taylor coefficient of a Fredholm determinant. Right: the relative
 error of the calculation; the dotted line (red) shows the errors for the radius $r=1$, the solid line (blue) shows the errors for the radius $r_\diamond$ (see also Example~\ref{ex:rmt1} and Figure~\ref{fig:rmt1}). Note that, though $\kappa_\diamond \doteq 1$ throughout the range of $s$, there is still a noticeable loss of accuracy in the tails: this is because the
 Fredholm determinant is not computed to small relative but small absolute errors; hence the model assumption~(\ref{eq:relmodel}) is violated. Nevertheless, $r_\diamond$
 gives a significant improvement over the fixed radius $r=1$ that belongs to the concept of absolute errors.}\label{fig:rmt2}
\end{figure}

\begin{example}\label{ex:detmock} Lacking a proof that $\kappa_\diamond \approx 1$ in Example~\ref{ex:rmt2} we analyze a ``mock-up'' Fredholm determinant in full detail, namely the $q$-series
\[
f(z) = (-z;q)_\infty = \prod_{k=0}^\infty (1+z q^k)\qquad (0<q<1).
\]
By a result of Euler \citeaffixed[Cor.~10.2.2]{MR1688958}{see} it is known that
\[
f(z) = \sum_{k=0}^\infty \frac{q^{\binom{k}{2}}}{(q;q)_k} \,z^k,
\]
where
\[
(q;q)_k = (1-q)(1-q^2)\cdots(1-q^k);
\]
in particular, $f$ has positive Taylor coefficients. By using (\ref{eq:order}), $f$ is easily seen to be an entire function of order $\rho=0$.
Now, a numerical experiment shows that $\kappa_\diamond(n) \doteq 1$ up to machine precision for $n=20$ and $q=1/2$. Hence we aim at proving that $\kappa_\diamond(n) \to 1$ as $n\to \infty$.

A natural first try would be to check $f$ for $H$-admissibility, see Corollary~\ref{cor:admissible}. This approach is doomed to fail, however, since we get
the following asymptotics from an application of
the Euler--Maclaurin sum formula:
\begin{equation}\label{eq:amockup}
a(r) = r\,\frac{f'(r)}{f(r)} =  \sum_{k=0}^\infty \frac{r q^k}{1+r q^k} = \frac{\log r}{\log(1/q)} + \frac{1}{2} + O(r^{-1})\qquad (r\to\infty);
\end{equation}
but
\[
b(r) = r a'(r) = \sum_{k=0}^\infty \frac{r q^k}{(1+r q^k)^2} = \frac{1}{\log(1/q)} + O(r^{-1})\qquad (r\to\infty),
\]
which remains bounded (recall that $H$-admissibility would require $b(r)\to\infty$).
Nevertheless, from (\ref{eq:rdiamond_nonneg}) and (\ref{eq:amockup}) we get the strong estimate
\begin{equation}
r_\diamond(n) = q^{1/2-n} + O(1)\qquad (n\to\infty),
\end{equation}
which not only shows that $r_\diamond(n)$ grows exponentially fast for this slowly growing function $f(z)$, but also that $r_\diamond$ varies very strongly with respect to the
parameter $q$ (an effect that we had already observed in Example~\ref{ex:rmt2}).

To study $\kappa_\diamond(n)$ we look more closely at $\log f(re^{i\theta})$ for large values of $r$. Applying the Euler--Maclaurin sum formula once more and using a uniformity criterion of \citeasnoun[p.~142]{MR589888}, we get
that
\begin{subequations}
\begin{equation}\label{eq:Remock}
\Re \log f(re^{i\theta}) = \frac{1}{2}\frac{\log(r)^2}{\log(1/q)} + \frac{1}{2}\log r + \frac{1}{12} \log(1/q) + \frac{\pi^2 - 3\theta^2}{6\log(1/q)} + O(r^{-1}),
\end{equation}
and
\begin{equation}\label{eq:Immock}
\Im \log f(re^{i\theta}) = \theta\frac{\log r}{\log(1/q)} + \frac{1}{2}\theta + \frac{q }{1-q}\sin(\theta) r^{-1} + O(r^{-2}),
\end{equation}
\end{subequations}
uniformly in $\theta$ as $r\to\infty$ ($r \not \in E$) with the possible exception of a set $E$ of relative linear density zero. The first asymptotics, (\ref{eq:Remock}), means that $f$ is of completely regular growth with the proximate
order $\rho(r)$ \citeaffixed[§I.12]{MR589888}{see},\footnote{Note that, consistent with $\rho=0$, we have $\rho(r)\to0$ as $r\to\infty$.}
\[
r^{\rho(r)} =  \frac{1}{2}\frac{\log(r)^2}{\log(1/q)}
\]
and {\em constant} indicator function $h(\theta) = 1$. This implies that the growth of $f$ is not localized enough in the angular direction as to hope for an application of the Laplace method to estimate the Cauchy integrals.
In other words, the second stage of using the saddle-point method \cite[p.~77]{MR671583} seems to be about to fail. However, this is not the case here, since the \emph{whole} circular contour of radius $r_\diamond=r_\diamond(n)$
is approximately a level line of $\Im \log z^{-n}f(z)$, not just the segment near the saddle point itself: in fact, from  (\ref{eq:Immock}) we get that
\begin{equation}\label{eq:phasemock}
\Im \log f(r_\diamond e^{i\theta}) - n \theta = \frac{q^{n+1/2}}{1-q} \sin\theta + O(q^{2n}) \qquad (n\to \infty),
\end{equation}
which is exponentially close to zero. Note that we can arrange for $r_\diamond(n) \not\in E$ since~$E$ is built from sets of increasingly small neighborhoods of the radii of the zeros of $f$, which are located at $-q^{-k}$, $k \in \N_0$. That is, (\ref{eq:phasemock}) holds uniformly in $\theta$.
Hence we get
\begin{multline*}
a_n = \frac{1}{2\pi r_\diamond^n} \int_{-\pi}^\pi e^{-in\theta} f(r_\diamond e^{i\theta})\,d\theta =  \frac{1}{2\pi r_\diamond^n} \int_{-\pi}^\pi e^{i(\Im \log f(r_\diamond e^{i\theta}) - n \theta)} |f(r_\diamond e^{i\theta})|\,d\theta\\*[2mm]
= \frac{1}{2\pi r_\diamond^n} \int_{-\pi}^\pi (1+ \frac{i q^{n+1/2}}{1-q}\sin\theta + O(q^{2n}))\cdot |f(r_\diamond e^{i\theta})|\,d\theta \\*[2mm]
= \frac{1}{2\pi r_\diamond^n} \int_{-\pi}^\pi  |f(r_\diamond e^{i\theta})|\,d\theta \cdot (1+ O(q^{2n})),
\end{multline*}
since the contribution of the odd function $\sin\theta |f(r_\diamond e^{i\theta})|$ to the integral is zero.
Therefore, we obtain the approximation
\begin{equation}\label{eq:kappaoneexponent}
\kappa_\diamond(n) = 1 + O(q^{2n})\qquad (n\to\infty),
\end{equation}
whose exponentially small error term helps to understand the excellent condition numbers observed in Example~\ref{ex:rmt2}.
\end{example}

\begin{table}[tbp]
\caption{For $f(z)=\phi_\lambda(z)$, a comparison of the quasi-optimal radius $r_\diamond(n)$ with its asymptotic value (\ref{eq:phirdiamond}). Note that this asymptotic value is not necessarily useful
 in practice. The value of $r_\diamond(n) = \argmin r^{-n} f(r)$ was actually computed by using Matlab's {\tt fminbnd} command.}
\vspace*{0mm}
\centerline{%
\setlength{\extrarowheight}{3pt}
{\small\begin{tabular}{rrrcrc}\hline
$n$ & $\lambda$ & $r_\diamond(n)$ & $\kappa_\diamond(n)$ & $(n/\lambda)^2$ & $\kappa(n,(n/\lambda)^2)$\\ \hline
$20$ &  $3$ &    $55.08575$ & $1.00005$ &  $  44.44444$ & $1.39833$\\
$100$ & $15$ &    $108.74559$ & $1.00000$ &  $  44.44444$ & $5.17900\cdot 10^{11}$\\*[0.5mm]\hline
\end{tabular}}}
\label{tab:incr}
\end{table}

\begin{example}\label{ex:gessel}
 We close the paper with a nontrivial example from the theory of random permutations. Let us denote the length of the longest increasing subsequence\footnote{For instance,
the longest increasing subsequence of $\sigma=(3,7,10,5,9,6,8,1,4,2) \in \mathcal{S}_{10}$ is given by $(3,5,6,8)$, hence $\ell(\sigma)=4$ in this case.} of a permutation $\sigma \in \mathcal{S}_n$ by $\ell(\sigma)$.
The probability distribution of $\ell(\sigma)$ that is induced by the uniform distribution on $\mathcal{S}_n$ can be encoded in a family of exponentially generating functions $\phi_\lambda(z)$ via
\begin{equation}\label{eq:distribution}
\prob(\sigma \in \mathcal{S}_n : \ell(\sigma) \leq \lambda) = \left.\frac{d^n}{dz^n} \phi_\lambda^{(n)}(z)\right|_{z=0}\qquad (\lambda,n \in \N).
\end{equation}
Now, the  seminal work of \citeasnoun{MR1041448} shows that $\phi_\lambda(z)$ can be expressed in terms of a Toeplitz determinant,
\begin{equation}\label{eq:toeplitz}
\phi_\lambda(z) = \det\left(I_{|j-k|}(2\sqrt{z})\right)_{j,k=0}^{\lambda-1}.
\end{equation}
Since the modified Bessel functions $z^{-k/2} I_{k}(2\sqrt{z})$ ($k\in \N_0$) are entire functions of perfectly regular growth (of order $\rho=1/2$ and type $\tau=2$, see Table~\ref{tab:functions}), $\phi_\lambda$ must
also be an entire function of perfectly regular growth; its order and type are easily inferred to be
$\rho=1/2$ and $\tau = 2\lambda$. Likewise, we obtain that the Phragmén--Lindelöf indicator of $\phi_\lambda(z)$ is given by
\[
h(\theta) = 2\lambda \cos(\theta/2)\qquad (|\theta|\leq \pi).
\]
Hence, we have $\Omega=1$ and, since there are no zeros of $\phi_\lambda(z)$ in the vicinity of the real axis, $\omega=1$ and $\lim_{n\to\infty} \kappa_\diamond(n) = 1$ by Theorem~\ref{thm:condcrg}.
This explains the very well behaved quasi-optimal condition numbers shown in Table~\ref{tab:incr}.
Theorem~\ref{thm:rdiamond} yields the following  asymptotics of the quasi-optimal radius:
\begin{equation}\label{eq:phirdiamond}
r_\diamond(n) \sim (n/\lambda)^2\qquad (n\to\infty).
\end{equation}
However, as we can see from Table~\ref{tab:incr}, this asymptotics does probably not hold uniformly in $\lambda$ and is therefore of limited practical use.
Hence, one has to compute the value of the radius $r_\diamond(n)$ itself by
numerically solving (\ref{eq:rdiamond_min_nonneg}). Using these radii and high-precision arithmetic we were able to reproduce \emph{numerically} the exact rational values of the distributions (\ref{eq:distribution}) for $n=15$, $30$,
 $60$, $90$, and $120$ as tabulated by \citeasnoun{Odlyzko},\footnote{For $n=30$, $60$, and $90$, these tables can be found in print in the book of \citeasnoun[pp.~464--467]{MR2129906}.} who has used the \emph{combinatorial} methods exposed in \citeasnoun{MR1771285} for his calculations.
 \begin{remark} The numerical evaluation of $\phi_\lambda(z)$ as given by the Toeplitz determinant (\ref{eq:toeplitz}) turns out to suffer from severe numerical instabilities.
Instead, we suggest to take one of the famous equivalent expressions in terms of a Fredholm determinant; such as the one given by \citeasnoun[p.~391]{MR1780118}
 \begin{multline*}
 \phi_\lambda(z) = e^z \det\left(I-K\projected{\ell^2(\lambda,\lambda+1,\ldots)}\right),\\*[2mm]
  K(j,k)=\sqrt{z}\,\frac{J_j(2\sqrt{z})J_{k+1}(2\sqrt{z})-J_{j+1}(2\sqrt{z})J_{k}(2\sqrt{z})}{j-k};\qquad
 \end{multline*}
 or the one given by \citeasnoun[p.~629]{MR1866169}
 \[
 \phi_\lambda(z) = 2^{-n} e^z \det(I-K\projected{L^2(C_1)}),\qquad K(t,s)=\frac{1-t^n e^{\sqrt{z}(t-t^{-1})} s^{-n} e^{-\sqrt{z}(s-s^{-1})}}{2\pi i (t-s)};
 \]
 see also \citeasnoun{MR1780119} and \citeasnoun{MR1935759}. Both expressions can be evaluated in a numerically stable way; the first using the projection method, the second using the quadrature method exposed in \citeasnoun[§§5 and 6]{Bornemann}.
 \end{remark}
\end{example}

\section*{Acknowledgements}

I am grateful to Ken McLaughlin and Peter Miller who suggested that there should be a relation of the asymptotic formula (\ref{eq:rdiamondasympt}) for the quasi-optimal radius $r_\diamond(n)$ to the saddle-point method. This has turned out to
be the ``missing link'' to really understand the previously mysterious (for me at least) fact that there is $\kappa_\diamond(n) \approx 1$ for so many entire functions I had been looking to.
The observation stated in Footnote~\ref{ft:divakar} is owed to a stimulating discussion with Divakar Viswanath.
\bibliographystyle{kluwer}
\bibliography{D}

\end{document}